\numberwithin{equation}{section}
\newtheorem{thm}{Theorem}[section]
\newtheorem{lem}[thm]{Lemma}
\newtheorem{cor}[thm]{Corollary}
\newtheorem{pro}[thm]{Proposition}
\theoremstyle{definition}
\newtheorem{rem}[thm]{Remark}
\def \ir {\iiint_{E(\epsilon,R_1,R_2)}}
\def \irb {\int_{|x_3|\leq \frac{1}{R_1R_2}}\int_{|x_2|\leq \frac{1}{R_2}}\int_{|x_1|\leq
\frac{1}{R_1}}}
\def \K {\mathcal{K}}
\def \Dx {\Delta_{x_1,h_1}}
\def \Dy {\Delta_{x_2,h_2}}
\def \Dz {\Delta_{x_3,h_3}}
\def \ta {{\theta_1}}
\def \tb {{\theta_2}}
\def \12 {{\frac{1}{2}}}
\def \ha {{h_1}}
\def \hb {{h_2}}
\def \hc {{h_3}}
\def \taa {{\alpha\theta_1}}
\def \tab {{\beta\theta_1}}
\def \tac {{\gamma\theta_1}}
\def \xr {\delta_1 \leq |x_1| \leq r_1}
\def \yr {\delta_2 \leq |x_2| \leq r_2}
\def \zr {\delta_3 \leq |x_3| \leq r_3}
\def \xen {\int_{\epsilon_1 \leq |x_1| \leq N_1}}
\def \yen {\int_{\epsilon_2 \leq |x_2| \leq N_2}}
\def \zen {\int_{\epsilon_4 \leq |x_3| \leq N_4}}
\def \x8n {\int_{8 \leq |x_1| \leq N_1}}
\def \y8n {\int_{8 \leq |x_2| \leq N_2}}
\def \z8n {\int_{8 \leq |x_3| \leq N_4}}
\def \xe8 {\int_{\epsilon_1 \leq |x_1| \leq 8}}
\def \ye8 {\int_{\epsilon_2 \leq |x_2| \leq 8}}
\def \ze8 {\int_{\epsilon_4 \leq |x_3| \leq 8}}
\def \Kxi {\frac{1}{\xi}\mathcal{K}(x_1,x_2,\frac{x_3}{\xi})}
\def \Kb {\widetilde{\mathcal{K}}(x_1,x_2,x_3)}
\def \xyzall  {\Big(\Big|\frac{x_1x_2\xi}{x_3\chi\eta}\Big|+\Big|\frac{x_3\chi\eta}{x_1x_2\xi}\Big|\Big)^{-\tb}}
\begin{document}

\baselineskip 17.2pt \hfuzz=6pt

\title[]
{A class of singular integrals\\ associated with Zygmund dilations}

\bigskip

\author[]
{Yongsheng Han, Ji Li, Chin-Cheng Lin, and Chaoqiang Tan}

\address{Department of Mathematics\\
Auburn University\\
Auburn, AL 36849, USA}
\email{hanyong@mail.auburn.edu}

\address{Department of Mathematics\\
Macquarie University\\
NSW, 2109, Australia}
\email{ji.li@mq.edu.au}

\address{Department of Mathematics\\
National Central University\\
Chung-Li 320, Taiwan}
\email{clin@math.ncu.edu.tw}

\address{Department of Mathematics\\
Shantou University\\
Shantou, Guangdong 515063, China}
\email{cqtan@stu.edu.cn}

\thanks{}

\subjclass[2010]{42B20, 42B30}
\keywords{Littlewood-Paley theory, multi-parameter singular integral operators, Zygmund dilations}

\begin{abstract}
          The main purpose of this paper is to study multi-parameter singular integral operators which commute with Zygmund dilations. We introduce a class of singular integral operators associated with Zygmund dilations and show the boundedness for these
          operators on $L^p, 1<p<\infty$, which covers those studied by Ricci--Stein \cite{RS} and Nagel--Wainger \cite{NW}.
\end{abstract}

\maketitle


\section{Introduction}
Ricci--Stein \cite{RS} introduced multi-parameter singular integral operators
and Fefferman--Pipher \cite{FP} considered specific singular integral operators associated with Zygmund dilations.
The boundedness for these operators on $L^p$ and weighted $L^p_w,1<p<\infty,$ was obtained by Ricci--Stein \cite{RS}
and Fefferman--Pipher \cite{FP}, respectively. The main purpose of this paper is to introduce a class of singular integral operators associated with Zygmund dilations and show the boundedness for these
operators on $L^p, 1<p<\infty$, which cover those studied by Ricci--Stein \cite{RS} and Nagel--Wainger \cite{NW}.

We now set our work in context. In their well-known theory, Calder\'on and Zygmund \cite{CZ}
introduced certain convolution singular integral operators on $\Bbb
R^n$ which generalize the Hilbert transform on $\Bbb R^1.$ They
proved that if $T(f)=\mathcal{K}*f,$ where $\mathcal{K}$ is defined
on $\Bbb R^n$ and satisfies the analogous estimates as $\frac{1}{x}$
does on $\Bbb R^1,$ namely
\begin{equation*}
|\mathcal{K}(x)|\le \frac{C}{|x|^n},
\end{equation*}
\begin{equation*}
|\nabla \mathcal{K}(x)|\le \frac{C}{|x|^{n+1}},
\end{equation*}
and
\begin{equation*}
\int_{a<|x|<b}\mathcal{K}(x)dx=0\qquad\text{for all}\ \
0<a<b,
\end{equation*}
then $T$ is bounded on $L^p(\Bbb R^n)$ for $1<p<\infty.$

The core of this theory is that the regularity and cancellation
conditions are invariant\break with respect to the
one-parameter family of dilations on $\Bbb R^n$ defined by
$\delta(x_1, x_2, \cdots, x_n)=\break (\delta x_1, \cdots, \delta
x_n)$, $\delta>0,$ in the sense that the kernel $\delta^n\mathcal
K(\delta x)$ satisfies the same conditions with the same bound as
$\mathcal K(x)$. Indeed, the classical singular integrals, maximal functions
and multipliers are invariant with respect to such one-parameter
dilations. The one-parameter theory is well understood up to now.
On the other hand, the multiparameter theory of $\Bbb R^n$ began
with Zygmund's study of the strong maximal function, which is
defined by
$$\mathcal M_n(f)(x)=\sup_{R\ni x}{{1}\over{|R|}} \int _R |f(y)| dy,$$
where $R$ are the rectangles in $\Bbb
R^n$ with sides parallel to the axes, and then continued with
Marcinkiewicz's proof of his multiplier theorem.
If we consider the
family of product dilations defined by $\delta(x_1, x_2, \cdots,
x_n)=(\delta_1 x_1, \cdots, \delta_n x_n), \delta_i>0, i=1,..., n,$
then the strong maximal function and Marcinkiewicz's multiplier are
invariant under the product dilations. The multiparameter dilations
are also associated with problems in the theory of differentiation
of integrals. Jensen--Marcinkiewicz--Zygmund \cite{JMZ} proved that
the strong maximal function in $\Bbb R^n$ is bounded from the Orlicz
space $L(1+(\log ^+L)^{n-1})$ to weak $L^1$. Zygmund further
conjectured  that if the rectangles in $\Bbb R^n$ had $n$ side
lengths which involve only $k$ independent variables, then the
resulting maximal operator should behave like $\mathcal{M}_k,$ the
$k$-parameter strong maximal operator. More precisely, for $1 \le k
\le n$, and for positive functions $\phi_1, \cdots, \phi_n$ as the
side-lengths of the given collection of rectangles where the maximal
function is defined, each one depending on parameters $t_1 > 0, t_2>
0, \cdots, t_k > 0$, assuming arbitrarily small values and
increasing in each variable separately, then the resulting maximal
function would be bounded from $L(1+(\log ^+L)^{k-1})$ to weak $L^1$
according to Zygmund's conjecture.
Cordoba \cite{Co} showed that for the unit cube $Q$ in $\Bbb R^3,$
$$\vert \lbrace x\in Q: {\mathcal M}_{\frak z}f(x)>\lambda \rbrace
\vert \leq {{C}\over{\lambda }}\Vert  f\Vert_{L\log L(Q)},$$
where ${\mathcal M}_{\frak z}f$ denotes the maximal function on $\Bbb R^3$
defined by
$${\mathcal M}_{\frak z} f (x)
 = \sup_{R\ni x}\frac{1}{|R|}\int_{R}|f(u)|du.$$
The supremum above is taken over all rectangles with sides parallel
to the axes and side lengths of the form $s, t$, and $\phi(s, t).$
Cordoba's result was generalized to the case of $\phi_1(s, t)$,
$\phi_2(s,t)$, $\phi_3(s, t)$ by Soria \cite{So} with some assumptions
on $\phi_1, \phi_2,\phi_3$. Moreover, Soria showed that
Zygmund's conjecture is not true even when $\phi_1(s, t)=s$,
$\phi_2(s, t)=s\phi(t)$, $\phi_3(s,t)=s\psi(t)$,
with $\phi, \psi$ being positive and increasing functions.

In \cite{RFS} Fefferman and Stein generalized the singular integral
operator theory to the product space. They took the space $\Bbb
R^n\times \Bbb R^m$ along with the two-parameter family of dilations
$(x, y)\mapsto (\delta_1 x, \delta_2 y), (x,y)\in \Bbb R^n\times
\Bbb R^m, \delta_1,\delta_2>0.$ Those operators considered in \cite{RFS}
generalize the double Hilbert transform on $\Bbb R^2$ given by
$H(f)=f\ast\frac{1}{xy}$ and are of the form $T(f)=\mathcal{K}\ast
f,$ where the kernel $\mathcal{K}$ is characterized by the
cancellation properties
\begin{equation}\label{eq 1.4}
\int_{a<|x|<b}\mathcal{K}(x,y)dx=0\qquad\text{for all}\ \ 0<a<b\ \text{and}\ y \in \Bbb R^m,
\end{equation}
\begin{equation}\label{eq 1.5}
\int_{a<|y|<b}\mathcal{K}(x, y)dy =0\qquad\text{for all}\ \ 0<a<b\ \text{and}\ x \in \Bbb R^n,
\end{equation}
and the regularity conditions
\begin{equation}\label{eq 1.6}
|\partial^\alpha_{x} \partial^\beta_{y} \mathcal{K}(x,y)| \le C_{\alpha,
\beta}|x|^{-n-|\alpha|}|y|^{-m-|\beta|}.
\end{equation}

Under the conditions (\ref{eq 1.4}) -- (\ref{eq 1.6}), Fefferman and Stein proved the
$L^p, 1<p<\infty,$ boundedness of the product convolution operators
$T(f)=\mathcal K\ast f.$ See \cite{RFS} for more details. Note that
the kernel $\mathcal K$ satisfying the conditions (\ref{eq 1.4}) -- (\ref{eq 1.6}) is
invariant with respect to the product dilation in the sense that the
kernel $\delta_1^n\delta_2^m\mathcal K(\delta_1 x,\delta_2 y)$
satisfies conditions (\ref{eq 1.4}) -- (\ref{eq 1.6}) with the same bound. For more discussions about the multiparameter product
theory, see \cite{Carleson,Chang,CF3,CF2,CF1,CoF,CFS,Fe4,Fe1,Fe2,Fe3,GS,J1,J2,J3,P,St} and in particular  the survey article of R. Fefferman \cite{Fe3} for development in this area. For singular integrals with flag kernels, see \cite{MRS,NRS,NRSW}.

It has been widely considered that the next simplest
multiparameter group of dilations after the product multiparameter
dilations is the so-called the Zygmund dilation defined on $\Bbb R^3$ by
$\rho_{s,t}(x_1,x_2,x_3) = (sx_1, tx_2, stx_3)$ for $s, t > 0$. Indeed, as far
as ${\mathcal M}_{\frak z}$ is concerned, E. M. Stein was the
first to link the properties of maximal operators associated with Zygmund dilations to
boundary value problems for Poisson integrals on symmetric spaces,
such as Siegel's upper half space. See the survey paper of R. Fefferman \cite{Fe1}
on the future direction of research of multiparameter analysis on Zygmund dilations.

There are two operators intimately associated with Zygmund dilations.
One is the maximal operator ${\mathcal M}_{\frak z}$ as mentioned
above. Another is the singular integral operator $T_{\frak z}$ introduced by Ricci and Stein
\cite{RS}, which commutes with this dilation. A special class of singular integral operators
$T_{\frak z}$ considered by Ricci and Stein is of the form
$T_{\frak z}f=f* \mathcal{K}$, where
$$
\mathcal{K}(x_1,x_2,x_3) =\sum _{k,j\in {\Bbb Z}}2^{-2(k+j)}\phi ^{k,j}
  \Big({{x_1}\over{2^k}},{{x_2}\over{2^j}},{{x_3}\over{2^{k+j}}}\Big)
$$
and the functions $\phi ^{k,j}$ are supported in an unit cube in
$\Bbb R^3$ with a certain amount of uniform smoothness and satisfy cancellation conditions
\begin{equation*}
\int _{{\Bbb R}^2}\phi ^{k,j}(x_1,x_2,x_3)dx_1 dx_2
=\int _{{\Bbb R}^2} \phi^{k,j}(x_1,x_2,x_3)dx_2 dx_3=\int _{{\Bbb R}^2}\phi ^{k,j}(x_1,x_2,x_3)dx_3 dx_1=0.
\end{equation*}
It was shown in \cite{RS} that $T_{\frak z}$ is
bounded on $L^p(\Bbb R^3)$ for all $1<p< \infty.$ Particularly, as mentioned in \cite{FP}, the above cancellation
conditions are also necessary for the boundedness of the above mentioned operators on $L^2(\Bbb R^3).$ It is easy to
see that if the dyadic Zygmund dilation is given by
$(\delta _{{2^j},{2^k}}f)(x_1,x_2,x_3)=2^{2(j+k)}
f(2^j x_1,2^k x_2, 2^{(j+k)} x_3),$ then we obtain that  $(\delta _{{2^j},{2^k}}T_{\frak z}(f))(x_1,x_2,x_3)=
T_{\frak z}(\delta _{{2^j},{2^k}}f)(x_1,x_2,x_3).$ This
means that the operators studied by Ricci and Stein commute with
Zygmund dilations of dyadic form. See \cite{RS} for more details.

R. Fefferman and Pipher \cite{FP} further showed that $T_{\frak z}$
is bounded in $L^p_w$ spaces for $1<p<\infty$ when the weights
$w$'s satisfy an analogous condition of Muckenhoupt associated with
Zygmund dilations. Related to the theory of operators like
${\mathcal M}_{\frak z}$ and $T_{\frak z}$, several authors
have considered singular integrals along surfaces in
$\Bbb R^n.$ See, for example, Nagel--Wainger \cite{NW}.

To achieve our goal, the first aim of this paper is to develop a class of singular
integral operators associated with Zygmund dilations, which covers
those introduced in \cite{RS} and prove the $L^2$ boundedness. The second aim is to show the $L^p$, $1<p<\infty$, boundedness for this class of singular integral operators.

Suppose that $\mathcal{K}(x_1,x_2,x_3)$ is a
function defined on $\Bbb R^3$ away from the
union $\lbrace 0, x_2,x_3\rbrace\cup\lbrace x_1, 0,x_3\rbrace\cup\lbrace x_1, x_2,0\rbrace$ and all $\alpha, \beta$ and $\gamma$ are integers taking only values 0 and 1. We define
\begin{eqnarray*}
\Dx^\alpha \K(x_1,x_2,x_3)=\alpha\K (x_1+h_1,x_2,x_3)- \K (x_1,x_2,x_3),\qquad \alpha=0
\text{ or } 1,
\end{eqnarray*}
\begin{eqnarray*}
\Dy^\beta \K(x_1,x_2,x_3)=\beta\K (x_1,x_2+h_2,x_3)- \K (x_1,x_2,x_3),\qquad \beta=0 \text{
or } 1,
\end{eqnarray*}
and
\begin{eqnarray*}
\Dz^\gamma \K(x_1,x_2,x_3)=\gamma\K (x_1,x_2,x_3+h_3)-\K (x_1,x_2,x_3),\qquad \gamma=0
\text{ or } 1.
\end{eqnarray*}
For simplicity, we denote $\Dx=\Dx^1,$ $\Dy =\Dy^1$ and $\Dz
=\Dz^1$.

The ``regularity'' conditions considered in this paper are
characterized by
\begin{equation}\tag{R}
|\Dx^\alpha\Dy^\beta\Dz^\gamma \mathcal{K}(x_1,x_2,x_3)| \leq
\frac{C|\ha|^{\taa} |\hb|^{\tab} |\hc|^{\tac}}{|x_1|^{\taa+1}
|x_2|^{\tab+1} |x_3|^{\tac+1} \big(|\frac{x_1x_2}{x_3}|+|\frac{x_3}{x_1x_2}|
\big)^{\tb}}
\end{equation}
for all $0 \leq \alpha \leq 1, 0 \leq \beta+\gamma\leq 1$ or $0 \leq
\alpha+\gamma \leq 1, 0 \leq \beta\leq 1,$ and $|x_1|\geq 2 |\ha|>0, |x_2|\geq
2 |\hb|>0, |x_3|\geq 2 |\hc|>0$, $\ha,\hb,\hc\in \Bbb R$ and some
$0<\ta\leq 1,0<\tb<1.$

Note that for any fixed non zero two variables, say, $x_1\not=0$ and
$x_2\not=0,$ $\mathcal{K}(x_1,x_2,x_3)$ is an integrable function with
respect to the variable $x_3$ and the resulting integral $\widetilde
K(x_1,x_2)=\int_{\Bbb R}\mathcal K(x_1,x_2,x_3)dx_3,$ as a kernel on $\Bbb R^2,$
satisfies the regularity conditions of the classical product kernel
on $\Bbb R^2$ as studied by Fefferman and Stein in \cite{RFS}. These
facts, as mentioned above, can also be easily checked for singular integral operators
studied by Ricci and Stein.

In this paper, we will consider three kinds of cancellation conditions. The first one
is given by
\begin{equation} \tag{C1.a}
\bigg|\int_{\xr }\int_{\yr }\int_{\zr } \K (x_1,x_2,x_3)dx_1dx_2dx_3 \bigg|
\le C
\end{equation}
uniformly for all $\delta_1,\delta_2,\delta_3,r_1,r_2,r_3>0;$
\begin{equation} \tag{C1.b}
\bigg|\int_{\xr }\int_{\yr } \Dz^\gamma \K (x_1,x_2,x_3)dx_1dx_2 \bigg| \le
\frac{C|\hc|^\tac}{|x_3|^{\gamma\ta+1}}
\end{equation}
uniformly for all $\delta_1,\delta_2,r_1,r_2>0,$ $|x_3|\geq 2|\hc|>0$
and $0 \le \gamma\leq 1;$
\begin{equation} \tag{C1.c}
\bigg|\int_{\yr }\int_{\zr } \Dx^\alpha \mathcal{K}(x_1,x_2,x_3)dx_2
dx_3 \bigg| \leq \frac{C|\ha|^{\taa}}{|x_1|^{\taa+1}}
\end{equation}
uniformly for all $\delta_2,\delta_3,r_2,r_3>0,$ $|x_1| \geq 2|\ha|>0$
and $0 \le \alpha\leq 1;$
\begin{equation} \tag{C1.d}
\bigg|\int_{\xr }\int_{\zr } \Dy^\beta \mathcal{K}(x_1, x_2,
x_3)dx_3dx_1\bigg| \leq \frac{C|\hb|^\tab}{|x_2|^{\tab+1}}
\end{equation}
uniformly for all $\delta_1,\delta_3,r_1,r_3>0,$ $|x_2| \geq 2|\hb|>0$
and $0 \le \beta\leq 1.$

The regularity conditions (R) and the cancellation conditions (C1.a)
- (C1.d) imply the following $L^2$ boundedness.


\begin{thm}\label{Thm 1.1}
Suppose that $\mathcal{K}$ is a function defined on $\Bbb R^3$ and
satisfies the conditions {\rm(R)} and {\rm(C1.a)} -- {\rm(C1.d)}. Set $\mathcal
K_{\epsilon}^N(x_1,x_2,x_3)=\mathcal K(x_1,x_2,x_3)$ if $\epsilon_1 \leq |x_1|
\leq N_1, \epsilon_2 \leq |x_2| \leq N_2$ and $\epsilon_3\leq |x_3| \leq
N_3$ and $\mathcal K_{\epsilon}^N(x_1,x_2,x_3)=0$ otherwise, where
$\epsilon=(\epsilon_1,\epsilon_2,\epsilon_3)$ and $N=(N_1,N_2,N_3)$
for all $0<\epsilon_1\leq N_1<\infty,\epsilon_2\leq N_2<\infty,$ and
$\epsilon_3\leq N_3<\infty.$ Then, the operator $\mathcal
K_{\epsilon}^N\ast f$ is bounded on $L^2(\Bbb R^3)$ and moreover,
$$\|\mathcal K_{\epsilon}^N*f\|_{L^2(\Bbb R^3)}\leq A\|f\|_{L^2(\Bbb R^3)}$$
where the constant $A$ depends only on the constant $C$ but not on
$\epsilon_1,\epsilon_2, \epsilon_3, N_1,N_2$ and $N_3.$
\end{thm}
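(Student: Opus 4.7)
By Plancherel's theorem, it suffices to establish the uniform Fourier multiplier bound $\sup_{\xi}|\widehat{\mathcal{K}_\epsilon^N}(\xi)|\le A$ with $A$ independent of $\epsilon$ and $N$. Fixing $\xi\ne 0$, I would split $\mathbb{R}^3$ into the eight regions $E_S = \{x:|\xi_j x_j|>1 \iff j\in S\}$, $S\subseteq\{1,2,3\}$, and treat the phase factors differently on ``low'' and ``high'' coordinates: for $j\notin S$ use $e^{-ix_j\xi_j}=1+(e^{-ix_j\xi_j}-1)$ with $|e^{-ix_j\xi_j}-1|\le|x_j\xi_j|^{\theta_1}$; for $j\in S$ apply the shift identity
\[
\int f(x_j)\,e^{-ix_j\xi_j}\,dx_j = -\tfrac{1}{2}\int\Delta_{x_j,\,\pi/\xi_j}f(x_j)\,e^{-ix_j\xi_j}\,dx_j,
\]
which replaces $\mathcal{K}_\epsilon^N$ by its partial difference of step $h_j=\pi/\xi_j$. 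Expanding yields a finite sum of terms parameterized by $(S,T)$, where $T\subseteq S^c$ indexes the variables in which the Taylor remainder is retained.

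Each term is bounded as follows. The $(S,T)=(\emptyset,\emptyset)$ contribution is controlled directly by \textrm{(C1.a)}. For terms with at least one ``inert'' coordinate $j\notin S\cup T$, I would Fubini-integrate the inert variables first, invoking \textrm{(C1.b)}, \textrm{(C1.c)} or \textrm{(C1.d)} (with exponents $\alpha,\beta,\gamma\in\{0,1\}$ prescribed by $S$). The remaining integrals factor into products of one-variable integrals: for $j\in T$, the factor $|\xi_j|^{\theta_1}$ from $|e^{-ix_j\xi_j}-1|\le|x_j\xi_j|^{\theta_1}$ is absorbed by $\int_{|x_j|\le 1/|\xi_j|}|x_j|^{\theta_1-1}dx_j\lesssim|\xi_j|^{-\theta_1}$; for $j\in S$, the factor $|\xi_j|^{-\theta_1}$ from $|h_j|^{\theta_1}=(\pi/|\xi_j|)^{\theta_1}$ is absorbed by $\int_{|x_j|>1/|\xi_j|}|x_j|^{-\theta_1-1}dx_j\lesssim|\xi_j|^{\theta_1}$.

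The main technical obstacle lies in the terms with $|S\cup T|=3$ (no inert variable), where one must work directly with the Zygmund weight. The key identity is that for each allowed difference pattern $(\alpha,\beta,\gamma)\ne(1,1,1)$, exactly one denominator factor in (R) is $|x_j|^{-1}$ (the ``skipped'' coordinate), and the substitution $u=|x_1x_2/x_3|$ yields
\[
\int_{\mathbb{R}}\frac{dx_j}{|x_j|}\Big(\big|\tfrac{x_1x_2}{x_3}\big|+\big|\tfrac{x_3}{x_1x_2}\big|\Big)^{-\theta_2} = 2\int_0^\infty \frac{u^{\theta_2-1}}{(1+u^2)^{\theta_2}}du = B(\theta_2/2,\theta_2/2),
\]
a finite constant depending only on $\theta_2\in(0,1)$ and independent of the other two coordinates. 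Integrating this variable first reduces the remaining estimate to the standard power integrals above. The excluded subcase $S=\{1,2,3\}$, not covered by (R) since $(1,1,1)$ is disallowed, is handled by shifting only two coordinates (say $x_1,x_3$) and applying (R) at $(1,0,1)$ together with the above identity in the $x_2$ variable. When the skipped coordinate itself carries a Taylor factor, so that the $|x_j|^{-1}$ in (R) is reduced to $|x_j|^{-(1-\theta_1)}$, the Beta integral acquires an extra factor depending on $\mu=\xi_3/(\xi_1\xi_2)$; this is resolved by choosing the remaining Fubini order adaptively according to whether $|\mu|\gtrless 1$, yielding a bound of the form $|\mu|^{\pm\theta}\le 1$ with $\theta<\min(\theta_1,\theta_2)$. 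Summing the eight regional contributions yields $|\widehat{\mathcal{K}_\epsilon^N}(\xi)|\le A$ uniformly, proving the theorem.
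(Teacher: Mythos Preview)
Your approach is essentially the paper's: both reduce via Plancherel to a uniform multiplier bound, partition $\mathbb R^3$ according to whether each coordinate is oscillatory at unit scale, apply the half-period shift on high coordinates and the remainder $e^{-it}-1$ on low ones, and feed the resulting pieces into (R) and (C1.a)--(C1.d). The Beta-type integral you isolate is exactly the mechanism, used implicitly throughout the paper's estimates, by which the Zygmund weight makes $\int|x_j|^{-1}\,dx_j$ converge.

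Two places where the paper is sharper. First, it opens by invoking the Zygmund-dilation invariance of (R) and (C1.a)--(C1.d) to rescale the frequency to $(1,1,\xi)$, so that two of the three thresholds become absolute constants. This eliminates your ``adaptive Fubini order according to $|\mu|\gtrless1$'' manoeuvre entirely: after normalization only one free frequency parameter remains and every integral is $\xi$-uniform without case distinction. Second, the shift identity you quote is exact only over $\mathbb R$; on truncated regions it produces boundary contributions both at the annular cutoffs $\epsilon_j,N_j$ and at the low/high interface. The paper isolates this as a separate lemma (its Lemma~2.1), which bounds $\int_{8\le|x|\le N}f(x)e^{-ix}\,dx$ by $\tfrac12\int_{E_N}|f|+\tfrac12\int|f-f(\cdot+\pi)|$ with $E_N$ a pair of bounded intervals; the $E_N$ pieces are then dispatched one by one using (R) with the corresponding exponent set to~$0$. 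You should make this explicit rather than absorb it into ``replaces $\mathcal K_\epsilon^N$ by its partial difference.''

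Two smaller gaps: your threshold $|\xi_jx_j|>1$ with step $h_j=\pi/\xi_j$ does not give $|x_j|\ge 2|h_j|$ as (R) requires---the paper's threshold~$8$ (after rescaling) fixes this. And your claim that every term with an inert coordinate is handled by (C1.b)--(C1.d) is not quite right: those conditions integrate \emph{pairs} of variables, so when exactly one coordinate is inert you must instead use (R) together with your Beta integral in that coordinate, just as in the $|S\cup T|=3$ case.
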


From Theorem \ref{Thm 1.1} we will deduce the existence of the
corresponding singular integrals in the $L^2$ norm as a limit of the
truncated integrals.

\begin{cor}\label{Cor 1.2}
Suppose that $\mathcal K$ is a function defined on $\Bbb R^3$ and
satisfies the conditions {\rm(R)}, {\rm(C1.a)} -- {\rm(C1.d)} and, in addition, the
three integrals
\begin{align*}
&\int_{|x_3|\leq 1}\int_{|x_2|\leq 1}\int_{|x_1|\leq 1} K_{\epsilon}^N(x_1,x_2,x_3)dx_1dx_2dx_3,\\
&\int_{|x_3|\leq 1}\int_{|x_2|\leq 1}\mathcal K_{\epsilon}^N(x_1,x_2,x_3)dx_2dx_3,\\
&\int_{|x_3|\leq 1}\int_{|x_1|\leq 1}\mathcal K_{\epsilon}^N(x_1,x_2,x_3)dx_1dx_3
\end{align*}
converge almost everywhere as $\epsilon_1,\epsilon_2,\epsilon_3\to 0$ and
$N_1,N_2,N_3\to \infty.$ Then the limit\break
$\lim_{\epsilon_1,\epsilon_2,\epsilon_3\to 0 \atop N_1,N_2,N_3\to
\infty} \mathcal K_{\epsilon}^N\ast f=\mathcal K\ast f$ exists in
the $L^2(\Bbb R^3)$ norm. Moreover,
$$\|\mathcal{K}*f\|_{L^2(\Bbb R^3)}\leq A\|f\|_{L^2(\Bbb R^3)}$$
with the constant $A$ depending only on the constant $C$.
\end{cor}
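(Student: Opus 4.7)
The plan is to convert $L^2$-convergence of $\mathcal K^N_\epsilon\ast f$ on the spatial side into pointwise a.e.\ convergence of the multiplier $\widehat{\mathcal K^N_\epsilon}$ on the Fourier side. By Theorem~\ref{Thm 1.1} together with Plancherel's theorem, the $L^2$-operator norm of convolution with $\mathcal K^N_\epsilon$ equals $\|\widehat{\mathcal K^N_\epsilon}\|_{L^\infty(\Bbb R^3)}$, so $\|\widehat{\mathcal K^N_\epsilon}\|_\infty\le A$ uniformly in $\epsilon$ and $N$. For any $f\in L^2(\Bbb R^3)$, the family $\widehat{\mathcal K^N_\epsilon}\,\widehat f$ is therefore pointwise dominated by $A|\widehat f|\in L^2$; if we can show that $\widehat{\mathcal K^N_\epsilon}(\xi)$ has a pointwise limit $m(\xi)$ for a.e.\ $\xi$ as $\epsilon\to 0$ and $N\to\infty$, dominated convergence yields $\widehat{\mathcal K^N_\epsilon}\widehat f\to m\widehat f$ in $L^2$, and Plancherel delivers $\mathcal K^N_\epsilon\ast f\to (m\widehat f)^{\vee}=:\mathcal K\ast f$ in $L^2$, with $\|\mathcal K\ast f\|_{L^2}\le A\|f\|_{L^2}$ in the limit.

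The remaining task is the pointwise a.e.\ convergence of $\widehat{\mathcal K^N_\epsilon}(\xi)=\int_{A^N_\epsilon}\mathcal K(x)\,e^{-2\pi i x\cdot\xi}\,dx$, where $A^N_\epsilon=\{\epsilon_j\le|x_j|\le N_j:j=1,2,3\}$. Fix $\xi$ with all $\xi_j\neq 0$ (which is a.e.). Decompose $A^N_\epsilon$ into the eight pieces obtained by splitting each coordinate into $\{|x_j|\le 1\}$ and $\{|x_j|>1\}$. On the ``all small'' piece, write $e^{-2\pi i x\cdot\xi}=1+(e^{-2\pi i x\cdot\xi}-1)$: the constant-$1$ contribution is precisely the first hypothesised integral, hence converges; the oscillatory remainder is pointwise dominated by $\min(1,C(\xi)|x|)\cdot|\mathcal K(x)|$, and its absolute convergence on the unit cube follows by decomposing $e^{-2\pi i x\cdot\xi}-1$ into one-variable increments and combining regularity (R) with the cancellations (C1.a)--(C1.d). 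On a ``one-large/two-small'' piece such as $\{|x_1|>1,\ |x_2|,|x_3|\le 1\}$, perform the $(x_2,x_3)$-integration first: by hypothesis the inner integral $\int_{|x_2|\le 1}\int_{|x_3|\le 1}\mathcal K^N_\epsilon\,dx_2\,dx_3$ converges a.e.\ in $x_1$ to a function $g(x_1)$ which inherits the decay and finite-difference estimates encoded in (C1.c); the outer integration of $g(x_1)e^{-2\pi i x_1\xi_1}$ over $\{1<|x_1|\le N_1\}$ then converges as $N_1\to\infty$ by a Dirichlet-type argument, using (C1.c) with $\alpha=1$ to integrate by parts against the oscillation. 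The two symmetric ``one-large/two-small'' pieces are analogous. Finally, on the ``two-large'' and ``all-large'' pieces, regularity (R) together with (C1.b)--(C1.d) renders the integrand absolutely integrable on $A^N_\epsilon$, so the limits exist by dominated convergence.

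The main obstacle is handling the Zygmund anisotropic weight $\big(|x_1x_2/x_3|+|x_3/(x_1x_2)|\big)^{-\theta_2}$ from (R), which couples all three variables and defeats naive product estimates. One must exploit the full finite-difference form of (C1.b)--(C1.d) (i.e.\ with $\alpha$, $\beta$, $\gamma=1$) in order to integrate by parts against the oscillations $e^{-2\pi i x_j\xi_j}$ in the unbounded directions and so recover the missing decay; the anisotropic weight must also be traded, via the inequality between arithmetic and geometric means in $|x_1x_2/x_3|$ and $|x_3/(x_1x_2)|$, against the $|x_j|$ powers produced by Taylor expansion and integration by parts. Once each of the eight pieces has a limit, $\widehat{\mathcal K^N_\epsilon}(\xi)$ converges for a.e.\ $\xi$, and the reduction of the first paragraph completes the proof.
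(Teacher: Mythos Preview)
Your route---show pointwise a.e.\ convergence of the multiplier $\widehat{\mathcal K_\epsilon^N}(\xi)$ and then invoke dominated convergence---is different from the paper's, which works entirely on the spatial side: the paper restricts to smooth compactly supported $f$, decomposes $\int\mathcal K_\epsilon^N(u)f(x-u)\,du$ into eight $|u_j|\lessgtr 1$ regions, and uses Taylor expansion of $f$ together with (R), (C1.a)--(C1.d) and the three hypothesised limits to show convergence in $L^2$ on this dense class. The uniform bound from Theorem~\ref{Thm 1.1} then extends the limit to all of $L^2$. The crucial advantage of the paper's approach is that the decay and smoothness of $f$ supply the integrability that $\mathcal K$ alone lacks.

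Your sketch has a genuine gap in the final step. You assert that on the ``two-large'' and ``all-large'' pieces, (R) together with (C1.b)--(C1.d) makes the integrand absolutely integrable, so dominated convergence applies. This is false. On the region $\{|x_j|>1:j=1,2,3\}$, condition (R) gives only
\[
|\mathcal K(x)|\le \frac{C}{|x_1|\,|x_2|\,|x_3|\,\bigl(|x_1x_2/x_3|+|x_3/(x_1x_2)|\bigr)^{\theta_2}},
\]
and along the Zygmund diagonal $|x_3|\sim|x_1x_2|$ the weight is $\sim 1$, so the integrand behaves like $|x_1|^{-1}|x_2|^{-1}|x_3|^{-1}$, which is not integrable at infinity. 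The cancellation conditions (C1.b)--(C1.d) are not size bounds on $\mathcal K$ and cannot produce absolute integrability. The same failure occurs on mixed pieces such as $|x_1|>1$, $|x_2|\le 1$, $|x_3|>1$. (The only regions where $\mathcal K$ is genuinely absolutely integrable are $\{|x_1|,|x_2|\le 1,\ |x_3|>1\}$ and $\{|x_1|,|x_2|>1,\ |x_3|\le 1\}$, as the paper itself notes.) To salvage your approach you would have to exploit the oscillation $e^{-ix\cdot\xi}$ on each large coordinate via the $\pi$-shift trick of Lemma~\ref{Add lem1}, exactly as in the proof of Theorem~\ref{Thm 1.1}, and then argue that the resulting absolutely convergent expressions also have limits as $N\to\infty$; this is possible but considerably more work than you indicate. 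Your handling of the ``one-large/two-small'' region is similarly incomplete: the inner $(x_2,x_3)$-integral carries the factor $e^{-i(x_2\xi_2+x_3\xi_3)}$, not just $\mathcal K$, so the hypothesised convergence of $\int_{|x_2|,|x_3|\le 1}\mathcal K_\epsilon^N\,dx_2\,dx_3$ does not directly apply without a further $e^{-i(x_2\xi_2+x_3\xi_3)}=1+(\cdots)$ decomposition.
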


We remark in advance that the proof of Corollary \ref{Cor 1.2}
indeed implies that\break $\lim_{\epsilon_1,\epsilon_2,\epsilon_3\to 0
\atop N_1,N_2,N_3\to \infty} \mathcal K_{\epsilon}^N\ast f$ exists
in the $L^p, 1<p<\infty,$ norm for smooth functions $f$ having
compact support. This fact leads to the study of the $L^p, p\not=2,$
boundedness of the operator $\mathcal K\ast f.$ For this purpose, we
need the second kind of the cancellation conditions which are
somewhat stronger than the first ones. They are given by
\begin{equation} \tag{C2.a}
\bigg|\int_{\zr }\int_{\yr }\int_{\xr } \K (x_1,x_2,x_3)dx_1dx_2dx_3 \bigg|
\le C
\end{equation}
uniformly for all $\delta_1,\delta_2,\delta_3,r_1,r_2,r_3>0;$
\begin{align}\tag{C2.b}
\begin{split}
&\bigg|\int_{\xr } \Dy^\beta\Dz^\gamma \mathcal{K}(x_1,x_2,x_3)dx_1 \bigg|\\
&\quad\leq \frac{C|\hb|^\tab |\hc|^\tac}{|x_2|^{\tab+1} |x_3|
^{\tac+1}}\bigg(\frac{1}{\big(|\frac{r_1x_2}{x_3}|+|\frac{x_3}{r_1x_2}|
\big)^{\tb}}+\frac{1}{\big(|\frac{\delta_1 x_2}{x_3}|+|\frac{x_3}{\delta_1
x_2}| \big)^{\tb}}\bigg)
\end{split}
\end{align}
for all $ \delta_1, r_1>0, 0 \le \beta+\gamma\leq 1,$ $|x_2|\geq
2|\hb|>0 ,|z| \geq 2|\hc|>0;$
\begin{equation} \tag{C2.c}
\bigg|\int_{\zr }\int_{\yr }\Dx^\alpha \mathcal{K}(x_1,x_2,x_3)dx_2 dx_3
\bigg| \leq \frac{C|h_1|^\taa}{|x_1|^{\taa+1}}
\end{equation}
uniformly for all $\delta_2,\delta_3,r_2,r_3>0,$ $|x_1| \geq 2|\ha|>0$
and $0 \le \alpha\leq 1.$ Or
\begin{equation} \tag{C$2^\prime.$a}
\bigg|\int_{\zr }\int_{\yr }\int_{\xr } \K (x_1,x_2,x_3)dx_1 dx_2 dx_3 \bigg|
\le C
\end{equation}
uniformly for all $\delta_1,\delta_2,\delta_3,r_1,r_2,r_3>0;$
\begin{align}\tag{C$2^\prime.$b}
\begin{split}
&\bigg|\int_{\yr }\Dx ^\alpha \Dz^\gamma \K (x_1,x_2,x_3)dx_2 \bigg| \\
&\quad\le
\frac{C|h_1|^\taa |h_3|^\tac}{|x_1|^{\taa+1} |x_3|^{\tac}}
\bigg(\frac{1}{\big(|\frac{r_2x_1}{x_3}|+|\frac{x_3}{r_2x_1}|
\big)^{\tb}}+\frac{1}{\big(|\frac{\delta_2 x_1}{x_3}|+|\frac{x_3}{\delta_2
x_1}| \big)^{\tb}}\bigg)
\end{split}
\end{align}
for all $\delta_2, r_2>0, 0 \le \alpha+\gamma\leq 1$, $|x_1| \geq
2|h_1|>0$ and $|x_3| \geq 2|h_3|>0;$
\begin{equation}  \tag{C$2^\prime.$c}
\bigg|\int_{\zr }\int_{\xr }\Dy^\beta \K(x_1,x_2,x_3)dx_1dx_3\bigg| \le
\frac{C|h_2|^\tab}{|x_2|^{\tab+1}}
\end{equation}
uniformly for all $\delta_1,\delta_3,r_1,r_3>0,$ $|x_2|\geq 2|h_2|>0$
and $0 \le \beta\le 1.$

We would like to point out that the condition (C2.b) implies (C1.b)
and (C1.d) while (C$2^\prime.$b) implies (C1.b) and (C1.c), and all
the above regularity and cancellation conditions are invariant with
respect to the Zygmund dilation in the sense that the kernel
$\delta^2_1\delta^2_2\mathcal{K}(\delta_1 x_1, \delta_2 x_2,
\delta_1\delta_2 x_3)$ satisfies the same conditions with the exactly
same bounds as $\mathcal{K}(x_1,x_2,x_3)$.

The $L^p$ estimate then is given by the following

\begin{thm}\label{Thm 1.3}
Suppose that $\mathcal K$ is a function defined on $\Bbb R^3$ and
satisfies the conditions {\rm(R)} and {\rm(C2.a)} -- {\rm(C2.c)} {\rm(}or {\rm(R)},
{\rm(C$2^\prime.$a)} -- {\rm(C$2^\prime$.c))} and in addition the three integrals
\begin{align*}
&\int_{|x_3|\leq 1}\int_{|x_2|\leq 1}\int_{|x_1|\leq 1}K_{\epsilon}^N(x_1,x_2,x_3)dx_1dx_2dx_3,\\
&\int_{|x_3|\leq 1}\int_{|x_2|\leq 1}\mathcal K_{\epsilon}^N(x_1,x_2,x_3)dx_2dx_3,\\
&\int_{|x_3|\leq 1}\int_{|x_1|\leq 1} \mathcal K_{\epsilon}^N(x_1,x_2,x_3)dx_1dx_3
\end{align*}
converge almost everywhere as $\epsilon_1,\epsilon_2,\epsilon_3\to
0$ and $N_1,N_2,N_3\to \infty.$ Then the operator
$$\mathcal K\ast f:=\lim_{\epsilon_1,\epsilon_2,\epsilon_3\to 0 \atop
N_1,N_2,N_3\to \infty} \mathcal K_{\epsilon}^N\ast f$$ defined
initially on $L^2\cap L^p,1<p<\infty,$ extends to a bounded operator
on $L^p(\Bbb R^3);$ moreover,
$$\|\mathcal{K}*f\|_{L^p(\Bbb R^3)}\leq A\|f\|_{L^p(\Bbb R^3)}$$
with the constant $A$ depending only on the constant $C$.
\end{thm}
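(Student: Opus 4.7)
The plan is to promote the $L^2$-boundedness provided by Theorem \ref{Thm 1.1} (which applies since, as remarked in the paper, (C2.a)--(C2.c) and (C$2'$.a)--(C$2'$.c) both imply (C1.a)--(C1.d)) to $L^p$-boundedness via a discrete Littlewood--Paley analysis adapted to Zygmund dilations, combined with almost-orthogonality estimates. First, I would choose a Schwartz function $\psi$ on $\Bbb R^3$ with rapid decay, sufficient smoothness, and cancellation in any two of the three variables, normalized so that $\sum_{j,k\in\Bbb Z}\widehat\psi(2^{-j}\xi_1,2^{-k}\xi_2,2^{-(j+k)}\xi_3)^2=1$ for $\xi\neq 0$. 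Setting $\psi_{j,k}(x_1,x_2,x_3)=2^{2(j+k)}\psi(2^j x_1,2^k x_2,2^{j+k}x_3)$, this yields a discrete Calder\'on reproducing formula $f=\sum_{j,k}\psi_{j,k}\ast\psi_{j,k}\ast f$ together with the square function characterization $\|f\|_{L^p}\sim\|(\sum_{j,k}|\psi_{j,k}\ast f|^2)^{1/2}\|_{L^p}$ for $1<p<\infty$, which follows from Littlewood--Paley theory adapted to Zygmund dilations in the spirit of Fefferman--Pipher \cite{FP}.

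The heart of the argument is then to expand
$$\mathcal K_\epsilon^N\ast f=\sum_{j,k,j',k'}\psi_{j',k'}\ast\big(\mathcal K_\epsilon^N\ast\psi_{j,k}\big)\ast\psi_{j,k}\ast f$$
and establish, uniformly in the truncation parameters $(\epsilon,N)$, the almost-orthogonality estimate
$$\big|\psi_{j',k'}\ast\mathcal K_\epsilon^N\ast\psi_{j,k}(x)\big|\le C\,2^{-|j-j'|\sigma_1}2^{-|k-k'|\sigma_2}\,\Phi_{j\wedge j',\,k\wedge k'}(x)$$
for some $\sigma_1,\sigma_2>0$, where $\Phi_{j,k}$ is an $L^1$-normalized bump on the Zygmund scale $(2^{-j},2^{-k},2^{-(j+k)})$. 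Here the regularity condition (R) with its factor $(|x_1x_2/x_3|+|x_3/x_1x_2|)^{-\theta_2}$ captures the Zygmund geometry, while (C2.a)--(C2.c) or (C$2'$.a)--(C$2'$.c) furnish the cancellation in one, two, or three variables needed after integrating against the test functions $\psi_{j,k}$ and $\psi_{j',k'}$.

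Substituting the almost-orthogonality estimate, then applying Cauchy--Schwarz in $(j',k')$ and the Fefferman--Stein vector-valued maximal inequality for rectangles of Zygmund shape, yields $\|\mathcal K_\epsilon^N\ast f\|_{L^p}\le A\|f\|_{L^p}$ uniformly in $(\epsilon,N)$; together with Corollary \ref{Cor 1.2} and the stated convergence of the truncated integrals this bound passes to the limit operator $\mathcal K\ast f$. The main obstacle will be the almost-orthogonality estimate itself: unlike in the product setting, the third scale $2^{j+k}$ is locked to the other two, so integration by parts using only one-variable regularity produces decay in only one of $|j-j'|$ and $|k-k'|$. To obtain joint decay one must split each interaction integral into the regions $|x_1x_2|\le|x_3|$ and $|x_1x_2|\ge|x_3|$ and, on each piece, exploit either the extra decay $(|x_1x_2/x_3|+|x_3/x_1x_2|)^{-\theta_2}$ from (R) or the mixed cancellation in (C2.b)/(C$2'$.b) to recover the missing index decay. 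Verifying this delicate combination with constants independent of the truncation $(\epsilon,N)$ is the principal technical difficulty, and is precisely why the strengthening from (C1) to (C2) or (C$2'$) is needed.
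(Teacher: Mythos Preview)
Your overall strategy---Littlewood--Paley decomposition adapted to Zygmund dilations, almost orthogonality of $\psi_{j',k'}\ast\mathcal K\ast\psi_{j,k}$, then Cauchy--Schwarz plus the Fefferman--Stein vector-valued maximal inequality---matches the paper exactly, and you correctly identify that the strengthening from (C1) to (C2)/(C$2'$) is needed precisely for the almost-orthogonality step.

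What you are missing is the device that makes that step manageable. The paper does \emph{not} attack $\psi_{j',k'}\ast\mathcal K\ast\psi_{j,k}$ directly. Instead it chooses the Littlewood--Paley bumps in the tensor form $\phi(x_1,x_2,x_3)=\phi^{(1)}(x_1)\phi^{(2)}(x_2,x_3)$, with $\phi^{(1)}$ having one-variable cancellation and $\phi^{(2)}$ having two-variable cancellation. This factorizes the almost-orthogonality problem into two independent pieces: (i) a single-scale estimate (Lemma~\ref{Lem 3.2}) showing that $\mathcal K\ast(\phi^{(1)}\otimes\phi^{(2)})$ already enjoys the full pointwise decay $\prod_i (1+|x_i|)^{-1-\lambda}$, and (ii) a purely test-function almost-orthogonality estimate $|\phi_{j,k}\ast\phi_{j',k'}|\lesssim 2^{-|j-j'|L}2^{-|k-k'|L}\cdot(\text{bump})$ (Lemma~\ref{almost}), which does not involve $\mathcal K$ at all. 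Combining them gives Proposition~\ref{Prop 3.1} by dilation invariance. The point is that all the work with (R) and (C2.a)--(C2.c) happens in (i), where there is only \emph{one} pair of scales, so there is no ``joint decay in $|j-j'|$ and $|k-k'|$'' to manufacture; the decay in the scale differences comes entirely from the elementary estimate (ii). The eight cases in Lemma~\ref{Lem 3.2} are organized by whether each $|x_i|$ is large or small (not by the sign of $|x_1x_2|-|x_3|$ as you suggest), and (C2.b) is what handles the cases $|x_1|$ small, $|x_2|,|x_3|$ large, where one must integrate $\mathcal K$ in $x_1$ alone.

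Two smaller remarks: the paper works with the limit operator $\mathcal K\ast f$ on $L^2$ (via Corollary~\ref{Cor 1.2}) rather than carrying the truncations $\mathcal K_\epsilon^N$ through the almost-orthogonality estimate; and the $\psi$ you describe (``cancellation in any two of the three variables'') is not quite what is used---the asymmetry $\phi^{(1)}(x_1)\phi^{(2)}(x_2,x_3)$ is essential, since the conditions (C2.b), (C2.c) single out the $x_1$ variable.
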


In many applications, singular integral operators are of the form
$\mathcal K\ast f$ where $\mathcal K$ is a distribution that equals a
function $\K$ on $\Bbb R^3$ away from the
union $\lbrace 0, x_2,x_3\rbrace\cup\lbrace x_1, 0,x_3\rbrace\cup\lbrace x_1, x_2,0\rbrace $ and satisfy
certain regularity and cancellation conditions. For this purpose, we
begin with recalling the bump functions introduced by Stein in \cite
{St}. A normalized bump function (n.b.f.) is a smooth function
$\phi$ supported on the unit ball and is bounded by a fixed constant
together with its gradient. The third kind of the cancellation
conditions considered in this paper is characterized by
\begin{equation} \tag{C3.a}
\bigg|\iiint \mathcal{K}(x_1,x_2,x_3)\phi(R_1 x_1, R_2 x_2, R_1R_2 x_3)dx_1dx_2dx_3
\bigg| \le C
\end{equation}
for every n.b.f. $\phi$ on $\Bbb R^3$ and all $R_1,R_2>0;$
\begin{equation}\tag{C3.b}
\bigg|\int \Dy^\beta\Dz^\gamma \K (x_1,x_2,x_3)\phi(R x_1)dx_1\bigg| \leq
\frac{C|h_2|^\tab |h_3|^\tac}{|x_2|^{\tab+1} |x_3|^{\tac+1}
\big(\big|\frac{R x_3}{x_2}\big|+\big|\frac{x_2}{R x_3}\big| \big)^{\tb}}
\end{equation}
for all $0 \le \beta+\gamma\leq 1$ , every n.b.f. $\phi$ on $\Bbb
R,$ $|x_2| \geq 2|h_2|>0,$ $|x_3| \geq 2|h_3|>0$ and all $R>0;$
\begin{equation} \tag{C3.c}
\bigg|\iint\Dx^\alpha \K (x_1,x_2,x_3)\phi(R_1 x_2, R_2x_3)dx_2dx_3\bigg| \leq
\frac{C|h_1|^\taa}{|x_1|^{\taa+1}}
\end{equation}
for all $0\leq \alpha \leq 1$, every n.b.f. $\phi$ on $\Bbb R^2$,
$|x_1| \geq 2|h_1|>0$ and all $R_1,R_2>0.$ Or
\begin{equation} \tag{C$3^\prime.$a}
\bigg|\iiint \mathcal{K}(x_1,x_2,x_3)\phi(R_1 x_1, R_2 x_2, R_1R_2 x_3)dx_1dx_2dx_3
\bigg| \le C
\end{equation}
for every n.b.f. $\phi$ on $\Bbb R^3$ and all $R_1,R_2>0;$
\begin{equation}\tag{C$3^\prime.$b}
\bigg|\int \Dx^\alpha\Dz^\gamma \K (x_1,x_2,x_3)\phi(R x_2)dx_2 \bigg| \leq
\frac{C|h_1|^\taa|h_3|^\tac}{|x_1|^{\taa+1} |x_3|^{\tac+1}
\big(\big|\frac{R x_3}{x_1}\big|+\big|\frac{x_1}{R x_3} \big|\big)^{\tb}}
\end{equation}
for all $0 \le \alpha+\gamma\leq 1$, every n.b.f. $\phi$ on $\Bbb
R$, $|x_1| \geq 2|h_1|>0, |x_3| \geq 2|h_3|>0$ and all $R>0;$
\begin{equation}  \tag{C$3^\prime.$c}
\bigg|\iint\Dy^\beta \mathcal{K}(x_1,x_2,x_3)\phi(R_1 x_1, R_2 x_3)dx_1 dx_3
\bigg| \leq \frac{C|h_2|^\tab}{|x_2|^{\tab+1}}
\end{equation}
for all $0\leq \beta \leq 1$, every n.b.f. $\phi$ on $\Bbb R^2$,
$|x_2| \geq 2|h_2|>0$ and all $R_1,R_2>0.$

\begin{thm}\label{Thm 1.4}
{\rm (a)} Suppose that $\mathcal{K}$ is a distribution  that equals a
function on $\Bbb R^3$ away from the
union $\lbrace 0, x_2,x_3\rbrace\cup\lbrace x_1, 0,x_3\rbrace\cup\lbrace x_1, x_2,0\rbrace $  and satisfies the
conditions {\rm(R)} and {\rm(C3.a)} -- {\rm(C3.c)} {\rm(}or {\rm(R)}, {\rm(C$3^\prime.$a)} --
{\rm(C$3^\prime.$c))}. Then, the operator $\mathcal K\ast f$ is bounded
on $L^p(\Bbb R^3),1<p<\infty;$ moreover,
$$\|\mathcal{K}*f\|_{L^p(\Bbb R^3)}\leq A\|f\|_{L^p(\Bbb R^3)}$$
with the constant $A$ depending only on the constant $C$.\\
{\rm (b)} Suppose that $\mathcal{K}$ is a distribution that equals a
function on $\Bbb R^3$ away from the union $\lbrace 0, x_2,x_3\rbrace\cup
\lbrace x_1, 0,x_3\rbrace\cup\lbrace x_1, x_2,0\rbrace $  and satisfies the
conditions {\rm(R)} and {\rm(C2.a)} -- {\rm(C2.c)} {\rm(}or {\rm(R)}, {\rm(C$2^\prime.$a)} --
{\rm(C$2^\prime.$c))}. Then, the operator $\mathcal K\ast f$ is bounded
on $L^p(\Bbb R^3),1<p<\infty,$ and,
$$\|\mathcal{K}*f\|_{L^p(\Bbb R^3)}\leq A\|f\|_{L^p(\Bbb R^3)}$$
with the constant $A$ depending only on the constant $C$.
\end{thm}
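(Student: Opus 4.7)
The plan is to reduce Theorem~\ref{Thm 1.4} to Theorem~\ref{Thm 1.3}: part~(b) by passing from the distribution kernel to its truncations, and part~(a) by showing that the bump-function conditions (C3.a)--(C3.c) imply the characteristic-function conditions (C2.a)--(C2.c) (and similarly for the primed variants). In both cases, the truncations $\mathcal K^N_\epsilon$ are genuine $L^1_{\mathrm{loc}}$ functions supported away from the singular axes and inherit (R) together with the relevant cancellations with the same constants.

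For part~(b), the fact that $\mathcal K$ is a distribution guarantees that the three convergence hypotheses of Theorem~\ref{Thm 1.3} are satisfied: each integral over $|x_j|\le 1$ of $\mathcal K^N_\epsilon$ coincides, up to a mollification, with the distributional pairing $\langle\mathcal K,\eta\rangle$ against a compactly supported cutoff $\eta$, which is automatically finite. Theorem~\ref{Thm 1.3} then yields $\|\mathcal K^N_\epsilon\ast f\|_{L^p(\Bbb R^3)}\le A\|f\|_{L^p(\Bbb R^3)}$ uniformly in $\epsilon$ and $N$, and passing to the limit using the distributional definition $\mathcal K\ast f=\lim_{\epsilon,N}\mathcal K^N_\epsilon\ast f$ on a dense subspace (e.g.\ $L^2\cap L^p$) gives the desired bound.

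For part~(a), the one- and two-dimensional cancellations (C2.b), (C2.c) follow from (C3.b), (C3.c) by writing each indicator $\chi_{\delta\le|\cdot|\le r}$ as a difference of two normalized bumps at scales $1/\delta$ and $1/r$, with the transition region controlled by (R). For (C2.a) the same decomposition reduces matters to bounding $|\iiint\mathcal K(x)\,\psi_1(R_1 x_1)\,\psi_2(R_2 x_2)\,\psi_3(R_3 x_3)\,dx|$ for independent scales $R_1, R_2, R_3$: the Zygmund-matching case $R_3=R_1R_2$ is exactly (C3.a), while in the mismatching case one first invokes (C3.b) to integrate out $x_1$, reducing to a two-dimensional integral whose contribution is summable over dyadic scales thanks to the decay factor $(|x_1 x_2/x_3|+|x_3/x_1 x_2|)^{-\theta_2}$ in (R); analogous slicings via (C3.c) handle the other mismatches.

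The principal obstacle is this Zygmund-scaling mismatch between the constrained triple $(R_1, R_2, R_1 R_2)$ in (C3.a) and the three independent scales required for (C2.a). The resolution relies on an iterated slicing argument combined with sharp use of the off-Zygmund decay factor in (R), and on verifying that all error contributions from the bump-function approximations sum to a constant uniform in $\delta_j$ and $r_j$.
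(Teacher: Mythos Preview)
Your reduction strategy is inverted relative to the paper's, and the direction you chose has a genuine gap. The paper proves part~(a) \emph{directly}: it establishes $L^2$ boundedness by estimating $\widehat{\mathcal K}$ (an argument parallel to Theorem~\ref{Thm 1.1} but using smooth cutoffs $\zeta_1,\zeta_2$ in place of characteristic functions, so that (C3.a)--(C3.c) apply), and then obtains $L^p$ via Littlewood--Paley and an almost-orthogonality lemma (Lemma~3.3) that mirrors Lemma~\ref{Lem 3.2} with (C3) replacing (C2). Part~(b) is then deduced from part~(a) by showing (R)$+$(C2) $\Rightarrow$ (C3), which is the \emph{easy} direction: since (C2.a) already allows three independent scales, one can absorb the Zygmund constraint $R_3=R_1R_2$ in (C3.a) for free after a Taylor expansion of the bump.

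Your proposal attempts the reverse implication (C3) $\Rightarrow$ (C2), and specifically (C3.a) $\Rightarrow$ (C2.a). This is the hard direction, because (C3.a) only controls bumps at Zygmund-compatible scales $(R_1,R_2,R_1R_2)$, whereas (C2.a) demands a bound for boxes with three \emph{independent} side lengths. Your sketched ``iterated slicing'' does not close: if you integrate out $x_1$ via (C3.b) with $\beta=\gamma=0$, you obtain
\[
\Big|\int \mathcal K\,\phi(x_1/a_1)\,dx_1\Big|\lesssim \frac{1}{|x_2|\,|x_3|}\Big(\Big|\tfrac{x_3}{a_1x_2}\Big|+\Big|\tfrac{a_1x_2}{x_3}\Big|\Big)^{-\theta_2},
\]
and integrating this bound over $\{|x_2|\le a_2,\ |x_3|\le a_3\}$ leaves a residual $\int_0^{a_2}\frac{dx_2}{|x_2|}$ after the $x_3$-integral, which diverges logarithmically. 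Likewise, handling the off-Zygmund piece $\psi(x_3/a_3)-\psi(x_3/(a_1a_2))$ by the pointwise bound (R) alone fails when $a_3\ll a_1a_2$: using $|\mathcal K|\lesssim |x_1|^{\theta_2-1}|x_2|^{\theta_2-1}|x_3|^{-1-\theta_2}$ gives a contribution of order $(a_1a_2/a_3)^{\theta_2}$, which is unbounded. Some cancellation beyond what (C3.b), (C3.c) provide is needed here, and your outline does not supply it.

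A secondary issue: in part~(b) you assert that the distribution hypothesis automatically yields the three convergence conditions of Theorem~\ref{Thm 1.3}. The pairing $\langle\mathcal K,\eta\rangle$ is only defined for smooth $\eta$, while the integrals in question are against characteristic functions; uniform boundedness from (C2.a) does not by itself give convergence of $\int_{|x_j|\le 1}\mathcal K_\epsilon^N$ as $\epsilon\to 0$. This gap is fixable with more care, but the gap in part~(a) is structural: the correct route is to prove (a) directly and deduce (b), not the reverse.
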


\begin{rem}
We would like to point out that all regularity and cancellation conditions given above are invariant with respect to Zygmund dilations. Moreover, the operators studied by Ricci and Stein, as mentioned before,
satisfy all above regularity and cancellation conditions. So
our results provide another proof of the boundedness for operators
in \cite{RS} on $L^p, 1<p<\infty.$ And the boundedness results
in this paper can be extended to higher dimensions. The consideration of regularity and cancellation
conditions in this paper leads naturally to the study of
non-convolution singular integral operators which are associated
with Zygmund dilations. We will discuss all these topics in the
forthcoming works.
\end{rem}

In the next section, we will show the $L^2$ boundedness for singular integral
operators associated with Zygmund dilations, namely Theorem \ref{Thm 1.1} and Corollary \ref{Cor 1.2}.
The proofs of $L^p$ boundedness, Theorems \ref{Thm 1.3} and \ref{Thm 1.4}, will be given in section 3.
In the last section, some
examples and applications of singular integral operators in our
class will be discussed. In particular, we show that the kernels of
singular integral operators $T_{\frak z}$ in the special class
studied by Ricci and Stein satisfy the conditions (R) and (C2.a) --
(C2.c) (or (R), (C$2^\prime.$a) -- (C$2^\prime.$c)), and (R), (C3.a)
-- (C3.c) (or (R), (C$3^\prime.$a) -- (C$3^\prime.$c)). We also show
that the operator considered by Nagel and Wainger \cite{NW}, where
only the $L^2$ boundedness is proved, belongs to our class, and
therefore, as a consequence of Theorem \ref{Thm 1.3}, is bounded on
$L^p, 1<p<\infty.$

\section{$L^2$ boundedness }

The main task of this section is to provide proofs of Theorem \ref{Thm 1.1} and Corollary \ref{Cor 1.2}.
Before proving Theorem \ref{Thm 1.1}, we first show the following simple result which will be used
frequently below.
\begin{lem} \label{Add lem1}
For any $f(x)\in L_{\text{loc}}^1(\Bbb R)$ and $N>8$, we have
\begin{eqnarray*}
&&\bigg|\int_{8 \leq |x| \leq N} f(x)e^{-ix}dx\bigg| \leq\12
\int_{E_N} |f(x)|dx +\12 \int_{8 \leq |x| \leq N} |f(x)-f(x+\pi)|dx,
\end{eqnarray*}
 where $E_N= \{x:4 \leq |x| \leq 12 \} \cup \{x:N-\pi \leq
|x| \leq N+\pi \}$.
\end{lem}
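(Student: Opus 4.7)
The argument rests on the cancellation identity $e^{-i\pi}=-1$: by averaging the integral with its $\pi$-shift one trades the oscillatory factor $e^{-ix}$ for a slow difference $f(x)-f(x+\pi)$, at the cost of two small boundary pieces that turn out to live inside $E_N$.

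Writing $A_N=\{x:8\le|x|\le N\}$ and $I=\int_{A_N}f(x)\,e^{-ix}\,dx$, I would substitute $x=u+\pi$ and use $e^{-i(u+\pi)}=-e^{-iu}$ to rewrite
\[
I=-\int_{A_N-\pi}f(u+\pi)\,e^{-iu}\,du,
\]
where $A_N-\pi=\{u:8\le|u+\pi|\le N\}$. Adding the two forms of $I$ yields
\[
2I=\int_{A_N}f(x)\,e^{-ix}\,dx-\int_{A_N-\pi}f(x+\pi)\,e^{-ix}\,dx.
\]
Splitting both domains along $A_N\cap(A_N-\pi)$ and its complements decomposes the right-hand side into a main piece
\[
\int_{A_N\cap(A_N-\pi)}\bigl[f(x)-f(x+\pi)\bigr]e^{-ix}\,dx,
\]
whose modulus is at most $\int_{8\le|x|\le N}|f(x)-f(x+\pi)|\,dx$, together with two boundary integrals supported on $A_N\setminus(A_N-\pi)$ and $(A_N-\pi)\setminus A_N$ respectively.

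A direct one-dimensional calculation gives
\[
A_N\setminus(A_N-\pi)=(-8-\pi,-8]\cup(N-\pi,N],
\]
and after the change of variable $y=x+\pi$ in the third integral,
\[
\bigl((A_N-\pi)\setminus A_N\bigr)+\pi=[-N,-N+\pi)\cup[8,8+\pi).
\]
Since $\pi<4$ both sets sit inside $E_N=\{4\le|x|\le 12\}\cup\{N-\pi\le|x|\le N+\pi\}$, and they are disjoint as soon as $N>8+\pi$ (while for $N\in(8,8+\pi]$ one already has $A_N\subset E_N$, trivialising the claim). Hence the two boundary integrals together are bounded in absolute value by $\int_{E_N}|f(x)|\,dx$. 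Combining the three bounds inside $|2I|$ and dividing by two produces the asserted inequality.

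The only genuine obstacle is the interval bookkeeping that confirms both symmetric differences of $A_N$ and $A_N-\pi$ are contained in $E_N$ and are disjoint for large $N$; beyond the shift identity $e^{-i\pi}=-1$, no subtle analytic input is required.
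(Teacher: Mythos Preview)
Your approach is essentially the paper's: both use $e^{-i\pi}=-1$ to average $I$ with its $\pi$-shift, isolate the difference $f(x)-f(x+\pi)$, and push the remaining boundary pieces into $E_N$. The only distinction is where you place the difference term: you integrate $f(x)-f(x+\pi)$ over $A_N\cap(A_N-\pi)$, while the paper integrates it over all of $A_N$ (adding and subtracting $\int_{A_N}f(x+\pi)e^{-ix}\,dx$). That small choice matters for the bookkeeping.

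There is a gap in your interval accounting. After you shift only the second boundary term, the two sets carrying $|f|$ are $S_1=A_N\setminus(A_N-\pi)$ and $S_2=A_N\setminus(A_N+\pi)$. These are \emph{not} disjoint for $8<N<8+2\pi$: for instance $(N-\pi,N]\subset S_1$ and $[8,8+\pi)\subset S_2$ overlap whenever $N-\pi<8+\pi$. Your fallback ``$A_N\subset E_N$ trivialises the claim for $N\le 8+\pi$'' does not rescue this, because the trivial bound $|I|\le\int_{A_N}|f|\le\int_{E_N}|f|$ is missing the factor $\tfrac12$ demanded by the lemma. In the paper's decomposition both boundary terms carry $f(x+\pi)$, so after a \emph{common} shift $y=x+\pi$ they land on $(A_N+\pi)\setminus A_N$ and $A_N\setminus(A_N+\pi)$, the two halves of the symmetric difference $A_N\triangle(A_N+\pi)$, which are automatically disjoint for every $N>8$ and sit inside $E_N$. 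Adopting that decomposition (difference over $A_N$ rather than over the intersection) closes the gap with no extra work.
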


\begin{proof}
We write
\begin{eqnarray*}
\int_{8 \leq |x| \leq N} f(x)e^{-ix}dx=\int_{8 \leq |x+\pi| \leq N}
f(x+\pi)e^{-i(x+\pi)}dx=-\int_{8 \leq |x+\pi| \leq N}
f(x+\pi)e^{-ix}dx.
\end{eqnarray*}
Therefore,
\begin{eqnarray*}
\bigg|\int_{8 \leq |x| \leq N} f(x)e^{-ix}dx\bigg|
&=&\12 \bigg|\int_{8 \leq |x| \leq N} f(x)e^{-ix}dx-\int_{8 \leq |x+\pi| \leq N}
f(x+\pi)e^{-ix}dx \bigg|\\
&\leq& \12 \bigg|\int_{8 \leq |x| \leq N} (f(x)-f(x+\pi))e^{-ix}dx\bigg|\\
&&+\12 \bigg|\int_{\{x:8 \leq |x| \leq N\} \setminus \{x:8 \leq |x+\pi| \leq N\}}
f(x+\pi)e^{-ix}dx \bigg|\\
&&+ \12 \bigg|\int_{\{x:8 \leq |x+\pi| \leq N\} \setminus \{x:8 \leq |x| \leq N\}}
f(x+\pi)e^{-ix}dx \bigg|\\
&=& \12 \bigg|\int_{8 \leq |x| \leq N} (f(x)-f(x+\pi))e^{-ix}dx\bigg|\\
&&+\12 \bigg|\int_{\{x:8 \leq |x-\pi| \leq N\} \setminus \{x:8 \leq |x| \leq
N\}} f(x)e^{-ix}dx \bigg|\\
&&+ \12 \bigg|\int_{\{x:8 \leq |x| \leq N\} \setminus \{x:8 \leq |x-\pi| \leq N\}}
f(x)e^{-ix}dx \bigg|\\
&\leq&\12 \int_{8 \leq |x| \leq N} |f(x)-f(x+\pi)|dx+\12 \int_{E_N} |f(x)|dx
\end{eqnarray*}
and Lemma \ref{Add lem1} follows.
\end{proof}
We now prove Theorem \ref{Thm 1.1}.
\vskip 0.5cm

\noindent{\it Proof of Theorem \ref{Thm 1.1}.} By the Plancherel
theorem, the $L^2$ boundedness of $\mathcal K_\epsilon^N\ast f$ is
equivalent to $|\widehat{\mathcal K_\epsilon^N}(\chi,\eta,\xi)|$
$\leq A$, where $\widehat{\mathcal K_\epsilon^N}$ is the Fourier
transform of $\mathcal K_\epsilon^N$, $A$ is the constant depending
only on the constant $C$ but not on
$\epsilon=(\epsilon_1,\epsilon_2,\epsilon_3)$, and $N=(N_1,N_2,N_3).$
To obtain such an estimate, we may assume that $\chi$ and $\eta$ are
both positive. Note that
\begin{eqnarray*}
\widehat{\mathcal K_\epsilon^N}(\chi,\eta,\xi)
&=& \int_{\epsilon_3\leq |x_3|\leq N_3}\int_{\epsilon_2 \leq |x_2|\leq
N_1}\int_{\epsilon_1 \leq |x_1|\leq N_1}
\mathcal{K}(x_1,x_2,x_3)e^{-ix\chi}e^{-iy\eta}e^{-iz\xi}dx_1dx_2dx_3\\
&=& \int_{\frac{\epsilon_3}{\chi\eta} \leq |x_3|\leq
\frac{N_3}{\chi\eta}} \int_{\frac{\epsilon_2}{\eta} \leq |x_2|\leq
\frac{N_2}{\eta}}\int_{\frac{\epsilon_1}{\chi} \leq |x_1|\leq
\frac{N_1}{\chi}}
\frac{1}{\chi^2\eta^2}\mathcal{K}\Big(\frac{x_1}{\chi},\frac{x_2}{\eta},\frac{x_3}{\chi\eta}\Big)\\
&&\hskip 6cm\cdot e^{-ix_1}e^{-ix_2}e^{-ix_3\xi/(\chi\eta)}dx_1dx_2dx_3.
\end{eqnarray*}
As remarked above, the assumptions on $\mathcal{K}$ are
invariant in the sense that\break $\delta^2_1\delta^2_2
\mathcal{K}(\delta_1x_1,\delta_2 x_2,$ $\delta_1\delta_2 x_3)$ satisfies the
same assumptions as $\mathcal{K}$ with the same constant $C$,
independent of $\delta_1, \delta_2>0.$ Thus
$\frac{1}{\chi^2\eta^2}\mathcal{K}(\frac{x_1}{\chi},\frac{x_2}{\eta},\frac{x_3}{\chi\eta})$
satisfies all conditions (R) and (C1.a) -- (C1.d) with the same
bounds uniformly for $\chi,\eta.$ Therefore, it suffices to show that
$\widehat{\mathcal K_\epsilon^N}(1,1,\xi)$ is a bounded function
uniformly for $0<\epsilon_1,\epsilon_2, \epsilon_3,
N_1,N_2,N_3<\infty.$ To do this, for simplicity, we set
$\epsilon_4=\epsilon_3|\xi|$ and $N_4=N_3|\xi|$. Without loss of
generality, we may assume that $\epsilon_1,\epsilon_2, \epsilon_4
\leq 8 \leq N_1,N_2,N_4$ since all other cases can be written as a
finite linear combination of these cases and can be handled
similarly.

 The bound of
$\widehat{\mathcal K_\epsilon^N}(1,1,\xi)$ follows from the
regularity and cancellation conditions on $\mathcal K.$ More
precisely, we write
\begin{align*}
\widehat{\mathcal K_\epsilon^N}(1,1,\xi)
&= \int_{\epsilon_3 \leq
|x_3| \leq N_3} \yen \xen\K(x_1,x_2,x_3) e^{-ix_1}e^{-ix_2}e^{-ix_3\xi}dx_1dx_2dx_3\\
&=\zen \int_{\yen}
\xen \Kxi e^{-ix_1}e^{-ix_2}e^{-ix_3}dx_1dx_2dx_3\\
&:=I+II,
\end{align*}
where $I$ is the result of integrating over the set $\big\{ 8\leq
|x_1|\leq N_1, \epsilon_2 \leq |x_2|\leq N_1, \epsilon_4 \leq |x_3|\leq
N_4\big\}$ and $II$ over the set $\big\{ \epsilon_1 \leq
|x_1|<8,\epsilon_2 \leq |x_2|\leq N_1, \epsilon_4 \leq |x_3|\leq
N_4\big\}.$

For term $I,$ using  Lemma \ref{Add lem1} with
$f(x_1)=\yen \zen \Kxi e^{-ix_2}\break \cdot e^{-ix_3 }dx_3dx_2$, we obtain
\begin{align*}
|I|
&\lesssim \int_{E_{N_1}}\bigg|\yen \zen
\Kxi e^{-ix_2}e^{-ix_3 }dx_3dx_2\bigg| dx_1\\
&\quad + \x8n \bigg|\yen \zen \Delta_{x_1,\pi}
\Big(\Kxi \Big) e^{-ix_2}e^{-ix_3 }dx_3dx_2\bigg|dx_1\\
&:= I_1+ I_2.
\end{align*}
Then
\begin{align*}
I_1
&\leq \int_{E_{N_1}}\bigg|  \y8n \zen
\Kxi e^{-ix_2}e^{-ix_3 }dx_3dx_2\bigg| dx_1\\
&\quad +\int_{E_{N_1}}\bigg|  \ye8 \zen
\Kxi e^{-ix_2}e^{-ix_3 }dx_3dx_2\bigg| dx_1\\
&:=I_{1,1}+I_{1,2}.
\end{align*}
To estimate term $I_{1,1},$ using Lemma \ref{Add lem1} with
$f(x_2)=\zen \Kxi e^{-ix_3 }dx_3$ we get
\begin{align*}
{|I_{1,1}|}
&\lesssim \int_{E_{N_1}} \int_{E_{N_2}}\bigg|  \zen
\Kxi e^{-ix_3 }dx_3 \bigg| dx_2dx_1\\
&\quad +\int_{E_{N_1}} \y8n \bigg|  \zen \Delta_{x_2,\pi} \Big( \Kxi \Big) e^{-ix_3
}dx_3 \bigg| dx_2dx_1\\
&\lesssim \int_{E_{N_1}} \int_{E_{N_2}}  \zen
\frac{1}{|x| |x_2| |x_3| \big(|\frac{x_1x_2\xi}{x_3}|+|\frac{x_3}{x_1x_2\xi}|
\big)^{\tb}}dx_3  dx_2dx\\
&\quad +\int_{E_{N_1}} \y8n  \zen
\frac{1}{|x_1| |x_2|^{\ta+1} |x_3|
\big(|\frac{x_1x_2\xi}{x_3}|+|\frac{x_3}{x_1x_2\xi}| \big)^{\tb}}dx_3  dx_2dx_1\\
&\lesssim 1,
\end{align*}
where we use the condition (R) above on $\mathcal{K}$ with
$\alpha=\beta=\gamma=0$ and $\alpha=0, \beta=1,\gamma=0,$
respectively.

To handle term $I_{1,2}$, we write
\begin{align*}
I_{1,2}
&\leq \int_{E_{N_1}}\bigg|  \ye8 \z8n
\Kxi e^{-ix_2}e^{-ix_3 }dx_3dx_2\bigg| dx_1\\
&\quad + \int_{E_{N_1}}\bigg|  \ye8 \ze8
\Kxi e^{-ix_2}e^{-ix_3 }dx_3dx_2\bigg| dx_1\\
&:=I_{1,2,1}+I_{1,2,2}.
\end{align*}
By Lemma \ref{Add lem1} with $f(x_3)=\ye8  \Kxi e^{-ix_2} dx_2$, we get
\begin{align*}
|I_{1,2,1}|
&\lesssim \int_{E_{N_1}}   \int_{E_{N_2}}\bigg| \ye8
\Kxi e^{-ix_2}dx_2\bigg| dx_3dx_1\\
&\quad +  \int_{E_{N_1}}   \z8n \bigg| \ye8 \Delta_{x_3,\pi} \Big(\Kxi \Big)
e^{-ix_2}dx_2\bigg| dx_3dx_1\\
&\lesssim \int_{E_{N_1}}
\int_{E_{N_2}} \ye8 \frac{1}{|x_1| |x_2| |x_3|
\big(|\frac{x_1x_2\xi}{x_3}|+|\frac{x_3}{x_1x_2\xi}|
\big)^{\tb}} dx_2 dx_3dx_1\\
&\quad + \int_{E_{N_1}} \z8n \ye8 \frac{1}{|x_1| |x_2| |x_3|^{1+\ta}
\big(|\frac{x_1x_2\xi}{x_3}|+|\frac{x_3}{x_1x_2\xi}| \big)^{\tb}} dx_2 dx_3 dx_1\\
&\lesssim 1,
\end{align*}
where we use the regularity condition (R) above with
$\alpha=\beta=\gamma=0$ and $\alpha=\beta=0, \gamma=1,$
respectively.

To estimate $I_{1,2,2},$ we note that
\begin{align*}
I_{1,2,2}
&\leq \int_{E_{N_1}}\bigg|  \ye8 \ze8
\Kxi (e^{-ix_2}e^{-ix_3 }-1)dx_3dx_2\bigg| dx_1\\
&\quad +\int_{E_{N_1}}\bigg|  \ye8 \ze8 \Kxi dx_3dx_2\bigg| dx_1 \\
&\lesssim \int_{E_{N_1}} \ye8 \ze8 \frac{|x_2|+|x_3|}{|x_1| |x_2| |x_3|
\big(|\frac{x_1 x_2\xi}{x_3}|+|\frac{x_3}{x_1 x_2\xi}| \big)^{\tb}}dx_3dx_2dx_1  +
\int_{E_{N_1}} \frac{1}{|x_1|} dx_1 \\
&\lesssim 1,
\end{align*}
 where we use the condition (R) with $\alpha=\beta=\gamma=0,$ and  the cancellation condition (C1.c) with
$\alpha=0.$

Next we consider $I_2$. Set $I_{2,1}$ and $I_{2,2}$ to be
$$
I_{2,1}= \x8n \bigg|\y8n \zen \Delta_{x_1,\pi} \Big(\Kxi \Big) e^{-ix_2}e^{-ix_3
}dx_3dx_2\bigg|dx_1
$$
and
$$
I_{2,2}= \x8n \bigg|\ye8 \zen \Delta_{x_1,\pi} \Big(\Kxi \Big) e^{-ix_2}e^{-ix_3
}dx_3dx_2\bigg|dx_1.
$$
Then $I_2\leq |I_{2,1}|+|I_{2,2}|.$ Similarly, applying Lemma
\ref{Add lem1} with $$f(x_2)=  \zen \Delta_{x_1,\pi} \Big(\Kxi \Big) e^{-ix_3 }dx_3,$$
we obtain
\begin{align*}
|I_{2,1}|
&\lesssim \x8n \int_{E_{N_2}}\bigg| \zen \Delta_{x_1,\pi} \Big(\Kxi
\Big) e^{-ix_3 }dx_3\bigg|dx_2dx_1\\
&\quad +\x8n \y8n \bigg| \zen \Delta_{x_2,\pi} \Delta_{x_1,\pi}
\Big(\Kxi \Big) e^{-ix_3 }dx_3\bigg|dx_2dx_1\\
&\lesssim \x8n \int_{E_{N_2}}\zen \frac{1}{|x_1|^{1+\ta} |x_2| |x_3|
\big(|\frac{x_1x_2\xi}{x_3}|+|\frac{x_3}{x_1x_2\xi}| \big)^{\tb}}dx_3dx_2dx_1\\
&\quad + \x8n \y8n \zen \frac{1}{|x_1|^{1+\ta} |x_2|^{1+\ta} |x_3|
\big(|\frac{x_1x_2\xi}{x_3}|+|\frac{x_3}{x_1x_2\xi}| \big)^{\tb}}dx_3dx_2dx_1\\
&\lesssim 1,
\end{align*}
where we use the condition (R) above with $\alpha=1,\beta=\gamma=0$
and $\alpha=\beta=1, \gamma=0,$ respectively.

For term $I_{2,2},$  note that
\begin{align*}
I_{2,2}
&\leq \x8n \bigg|\ye8 \z8n \Delta_{x_1,\pi} \Big(\Kxi \Big)
e^{-ix_2}e^{-ix_3 }dx_3dx_2\bigg|dx_1 \\
&\quad+  \x8n \bigg|\ye8 \ze8 \Delta_{x_1,\pi} \Big(\Kxi \Big) e^{-ix_2}e^{-ix_3
}dx_3dx_2\bigg|dx_1\\
&:= I_{2,2,1}+ I_{2,2,2}.
\end{align*}
By Lemma \ref{Add lem1} with $f(x_3)=\ye8  \Delta_{x_1,\pi} \Big(\Kxi \Big)
e^{-ix_2}dx_2$, we have
\begin{align*}
|I_{2,2,1}|
&\lesssim \x8n \int_{E_{N_4}} \bigg|\ye8  \Delta_{x_1,\pi}
\Big(\Kxi \Big) e^{-ix_2}dx_2\bigg|dx_3dx_1\\
&\quad+\x8n \z8n \bigg|\ye8  \Delta_{x_3,\pi} \Delta_{x_1,\pi}
\Big(\Kxi \Big) e^{-ix_2}dx_2\bigg|dx_3dx_1\\
&\lesssim \x8n \int_{E_{N_4}} \ye8 \frac{1}{|x_1|^{1+\ta} |x_2| |x_3|
\big(|\frac{x_1x_2\xi}{x_3}|+|\frac{x_3}{x_1x_2\xi}| \big)^{\tb}}dx_2dx_3dx_1\\
&\quad +\x8n \z8n \ye8 \frac{1}{|x_1|^{1+\ta} |x_2| |x_3|^{1+\ta}
\big(|\frac{x_1x_2\xi}{x_3}|+|\frac{x_3}{x_1x_2\xi}| \big)^{\tb}}dx_2dx_3dx_1\\
&\lesssim
1,
\end{align*}
where we use the conditions (R) above with $\alpha=1,
\beta=\gamma=0$ and $\alpha=\gamma=1, \beta=0,$ respectively.

The estimate for term $I_{2,2,2}$ follows from a similar way as term
$I_{1,2,2}.$ Indeed,
\begin{align*}
I_{2,2,2}
&= \x8n \bigg|\ye8 \ze8 \Delta_{x_1,\pi} \Big(\Kxi \Big)
(e^{-ix_2}e^{-ix_3 }-1)dx_3dx_2\bigg|dx_1\\
&\quad+  \x8n \bigg|\ye8 \ze8 \Delta_{x_1,\pi} \Big(\Kxi \Big) dx_3dx_2\bigg|dx_1\\
&\lesssim \x8n \ye8 \ze8 \frac{|x_2|+|x_3|}{|x_1|^{1+\ta} |x_2| |x_3|
\big(|\frac{x_1x_2\xi}{x_3}|+|\frac{x_3}{x_1x_2\xi}| \big)^{\tb}}dx_2dx_3dx_1\\
&\quad+\x8n \frac{1}{|x_1|^{1+\ta}}dx_1\\
&\lesssim 1,
\end{align*}
where we use the  condition (R) with $\alpha=1, \beta=\gamma=0$ and
the condition (C1.c), respectively.

Now we turn to the estimate for term $II.$ We first write
\begin{align*}
II&= \zen \yen \xe8 \Kxi
(e^{-ix_1}-1)e^{-ix_2}e^{-ix_3 }dx_1dx_2dx_3 \\
&\quad +\zen \yen \xe8 \Kxi e^{-ix_2}e^{-ix_3
}dx_1dx_2dx_3\\
&:=II_1 +II_2.
\end{align*}
We further write
\begin{align*}
|II_1|
&=\zen \y8n \xe8 \Kxi
(e^{-ix_1}-1)e^{-ix_2}e^{-ix_3 }dx_1dx_2dx_3\\
&\quad +\zen \ye8 \xe8 \Kxi
(e^{-ix_1}-1)e^{-ix_2}e^{-ix_3 }dx_1dx_2dx_3\\
&:=II_{1,1}+ II_{1,2}.
\end{align*}
For term $II_{1,1},$ using Lemma \ref{Add lem1} with $f(x_2)=\zen \xe8
\Kxi (e^{-ix_1}-1)e^{-ix_3 }dx_1dx_3$, we obtain
\begin{align*}
|II_{1,1}|
&\lesssim \int_{E_{N_2}}\bigg|\zen \xe8 \Kxi
(e^{-ix_1}-1)e^{-ix_3 }dx_1dx_3\Bigg|dx_2\\
&\quad+\hskip -0.1cm\y8n \bigg| \hskip -0.1cm\zen \hskip -0.1cm\xe8 \hskip -0.1cm\Delta_{x_2,\pi} \Big(\Kxi \Big)
(e^{-ix_1}-1)e^{-ix_3 }dx_1dx_3\Bigg|dx_2\\
&\lesssim  \int_{E_{N_2}}\zen \xe8  \frac{1}{ |x_2| |x_3|
\big(|\frac{x_1x_2\xi}{x_3}|+|\frac{x_3}{x_1x_2\xi}| \big)^{\tb}}dx_1dx_3dx_2\\
&\quad +\y8n \zen \xe8  \frac{1}{ |x_2|^{1+\ta} |x_3|
\big(|\frac{x_1x_2\xi}{x_3}|+|\frac{x_3}{x_1x_2\xi}| \big)^{\tb}}dx_1dx_3dx_2,
\end{align*}
where we use the condition (R) above for $\alpha=\beta=\gamma=0$ and
$\beta=1, \alpha=\gamma=0,$ respectively.

Similarly,
\begin{align*}
II_{1,2}
&=\z8n \ye8 \xe8 \Kxi (e^{-ix_1}-1)e^{-ix_2}e^{-ix_3 }dx_1dx_2dx_3\\
&\quad+ \ze8 \ye8 \xe8 \Kxi (e^{-ix_1}-1)e^{-ix_2}e^{-ix_3 }dx_1dx_2dx_3 \\
&:=II_{1,2,1}+ II_{1,2,2}.
\end{align*}
The bounds of $II_{1,2,1}$ and $II_{1,2,2}$, we follow from the
similar estimates as terms $I_{2,2,1}$ and $I_{2,2,2},$
respectively.

Finally, we estimate term $II_2.$ Denote $II_2=II_{2,1}+II_{2,2},$ where
$$II_{2,1}=\zen \y8n \xe8 \Kxi e^{-ix_2}e^{-ix_3 }dx_1dx_2dx_3$$
and
$$II_{2,2}=\zen \ye8 \xe8 \break \Kxi e^{-ix_2}e^{-ix_3 }dx_1dx_2dx_3.$$
Note that
\begin{align*}
|II_{2,1}|
&=\z8n \y8n \xe8 \Kxi e^{-ix_2}e^{-ix_3 }dx_1dx_2dx_3 \\
&\quad +\ze8 \y8n \xe8 \Kxi e^{-ix_2}e^{-ix_3 }dx_1dx_2dx_3. \\
&:=II_{2,1,1}+II_{2,1,2}.
\end{align*}
Applying Lemma \ref{Add lem1} with $f(x_2)=\z8n  \xe8 \Kxi e^{-ix_3
}dx_1dx_3$ first, then $f(x_3)=  \xe8 \Kxi dx_1$, and combining with the
condition (R), we obtain
\begin{align*}
|II_{2,1,1}|
&\lesssim \int_{E_{N_2}}\bigg| \z8n  \xe8 \Kxi e^{-ix_3}dx_1dx_3 \bigg|dx_2 \\
&\quad +\y8n \bigg| \z8n  \xe8 \Delta_{x_2,\pi} \Big( \Kxi \Big) e^{-ix_3}dx_1dx_3 \bigg|dx_2  \\
&\lesssim \int_{E_{N_2}} \int_{E_{N_4}} \bigg| \xe8 \Kxi e^{-ix_3}dx_1 \bigg| dx_3dx_2\\
&\quad + \int_{E_{N_2}} \z8n \bigg| \xe8 \Delta_{x_3,\pi} \Big(\Kxi \Big) e^{-ix_3}dx_1 \bigg| dx_3dx_2\\
&\quad +\y8n  \int_{E_{N_4}} \bigg| \xe8 \Delta_{x_2,\pi} \Big( \Kxi \Big) dx_1\bigg| dx_3dx_2 \\
&\quad +\y8n  \z8n \bigg| \xe8 \Delta_{x_3,\pi} \Delta_{x_2,\pi} \Big( \Kxi \Big) dx_1\bigg| dx_3dx_2 \\
&\lesssim 1.
\end{align*}

To estimate $II_{2,1,2},$ inserting $e^{-ix_3 }=[e^{-ix_3 }-1]+1$ and
then applying Lemma \ref{Add lem1}, we get
\begin{align*}
II_{2,1,2}
&\lesssim \bigg|\y8n  \xe8 \ze8   \Kxi e^{-ix_2}(e^{-ix_3}-1) dx_3dx_1 dx_2\bigg|\\
&\quad +\bigg| \y8n   \xe8 \ze8  \Kxi e^{-ix_2} dx_3dx_1dx_2 \bigg|\\
&\lesssim \int_{E_{N_2}} \xe8 \ze8    \Big| \Kxi \Big| |x_3|  dx_3dx_1dx_2 \\
&\quad + \y8n \xe8 \ze8     \Big| \Delta_{x_2,\pi} \Big(\Kxi\Big) \Big| |x_3| dx_3dx_1dx_2  \\
&\quad + \int_{E_{N_2}}\bigg| \xe8 \ze8 \Kxi dx_3dx_1\bigg| dx_2 \\
&\quad +\y8n \bigg| \xe8 \ze8 \Delta_{x_2,\pi} \Big( \Kxi \Big) dx_3dx_1\bigg| dx_2.
\end{align*}
The required bound then follows from the conditions (R) for the
first two integrals while the condition (C1.d)  for the last two
integrals.

For $II_{2,2}$, splitting the set $\{\epsilon_4 \leq |x_3| \leq N_4\}$
into two parts $\{\epsilon_4 \leq |x_3| \leq 8\}$ and $\{8 \leq |x_3|
\leq N_4\}$, and inserting $e^{-ix_2}e^{-ix_3 }= (e^{-ix_2}-1)(e^{-ix_3}-1)
+(e^{-ix_2} -1) + (e^{-ix_3}-1)+ 1$ for the integral over the first set
and $e^{-ix_2}e^{-ix_3 }= (e^{-ix_2}-1)e^{-ix_3}+e^{-ix_3}$ for the integral
over the second set, we obtain
\begin{align*}
II_{2,2}
&\lesssim \bigg|\ze8 \ye8 \xe8 \Kxi (e^{-ix_2}-1)(e^{-ix_3
}-1)dx_1dx_2dx_3\bigg|\\
&\quad+ \bigg|\ze8 \ye8 \xe8 \Kxi (e^{-ix_2}-1)dx_1dx_2dx_3\bigg|
\\&\quad+\bigg|\ze8 \ye8 \xe8 \Kxi (e^{-ix_3
}-1)dx_1dx_2dx_3\bigg|\\
&\quad+\bigg|\ze8 \ye8 \xe8 \Kxi dx_1dx_2dx_3\bigg|
\\&\quad+ \bigg|\z8n \ye8 \xe8 \Kxi (e^{-ix_2}-1)e^{-ix_3 }dx_1dx_2dx_3\bigg|
\\&\quad+\bigg|\z8n \ye8 \xe8 \Kxi e^{-ix_3
}dx_1dx_2dx_3\bigg|.
\end{align*}
The first four items follow from the conditions (R),(C1.d),(C1.b),
and (C1.a), respectively. To estimate the fifth and sixth terms,
we apply Lemma \ref{Add lem1} to get
\begin{align*}
&\bigg|\z8n \ye8 \xe8 \Kxi (e^{-ix_2}-1)e^{-ix_3 }dx_1dx_2dx_3\bigg|\\
&\quad +\bigg|\z8n \ye8 \xe8 \Kxi e^{-ix_3
}dx_1dx_2dx_3\bigg| 
\\&\lesssim
\int_{E_{N_4}} \bigg| \ye8 \xe8 \Kxi (e^{-ix_2}-1)dx_1dx_2\bigg|dx_3
\\&\quad + \z8n \bigg| \ye8 \xe8 \Delta_{x_3,\pi} \Big( \Kxi \Big) (e^{-ix_2}-1)dx_1dx_2\bigg|dx_3
\\&\quad +\int_{E_{N_4}} \bigg|\ye8 \xe8 \Kxi dx_1dx_2\bigg| dx_3
\\&\quad +\z8n \bigg|\ye8 \xe8 \Delta_{x_3,\pi} \Big(\Kxi \Big) dx_1dx_2\bigg| dx_3\\
&\lesssim 1,
\end{align*}
where we use the condition (R) for the first two terms and (C1.b)
for the last two terms above. Thus these estimates yield the bound
of $II_{2,2}$ and hence the required bound for term $II.$ The $L^2$
boundedness of $\mathcal K_\epsilon^N\ast f$ follows. \qed
\vskip 0.5cm

\noindent{\it Proof of Corollary \ref{Cor 1.2}.} It
suffices to show that $\mathcal K_\epsilon^N\ast f$ converges in
$L^2(\Bbb R^3)$, as
$\epsilon_1,\epsilon_2,\epsilon_3\to 0$ and $N_1,N_2,N_3\to \infty,$ for a dense subset of $L^2(\Bbb R^3)$.
For this purpose, we consider smooth functions $f$ having compact support. We
may assume that $\epsilon_1,\epsilon_2,\epsilon_3<1$ and
$N_1,N_2,N_3>1.$

We write $\iiint_{\Bbb R^3}\mathcal
K_\epsilon^N(u)f(x-u))du$ as a sum of eight terms;
that is, the integrals over the sets
(i) $|u_1|\leq 1, |u_2|\leq 1, |u_3|\leq 1$;
(ii) $|u_1|\leq 1, |u_2|\leq 1, |u_3|> 1$;
(iii) $|u_1|\leq 1, |u_2|>1, |u_3|\leq 1$;
(iv) $|u_1|\leq 1, |u_2|> 1, |u_3|> 1$;
(v) $|u_1|> 1,|u_2|\leq 1, |u_3|\leq 1$;
(vi) $|u_1|> 1, |u_2|\leq 1, |u_3|> 1$;
(vii) $|u_1|> 1, |u_2|> 1, |u_3|\leq 1$;
(viii) $|u_1|> 1, |u_2|> 1, |u_3|> 1$.
Inserting
\begin{align*}
f(x-u)
&=[f(x_1-u_1,x_2-u_2,x_3-u_3)-f(x_1,x_2-u_2,x_3-u_3)-f(x_1-u_1,x_2,x_3)\\
&\quad +f(x_1,x_2,x_3)]+[f(x_1-u_1,x_2,x_3)-f(x_1,x_2,x_3)]\\
&\quad+[f(x_1,x_2-u_2,x_3-u_3)-f(x_1,x_2-u_2,x_3)]\\
&\quad +[f(x_1,x_2-u_2,x_3)-f(x_1,x_2,x_3)]+f(x_1,x_2,x_3)
\end{align*}
into the first term
\begin{eqnarray*}
\int_{\epsilon_1 \leq |u_1|\leq 1}\int_{\epsilon_2 \leq |u_2|\leq
1}\int_{\epsilon_3 \leq |u_3|\leq 1}\mathcal K(u_1,u_2,u_3)f(x_1-u_1,x_2-u_2,x_3-u_3)
du_1du_2du_3
\end{eqnarray*}
yields five integrals. In view of the conditions of $f$ and the
condition (R) on $\mathcal K,$ the first integral is dominated by
\begin{align*}
&F_1(x_1)F_2(x_2)F_3(x_3)\int_{ |u_1|\leq 1}\int_{|u_2|\leq 1}\int_{|u_3|\leq
1} \frac{1}{|u_1| |u_2|
|u_3|}\Big(\Big|\frac{u_1u_2}{u_3}\Big|+\Big|\frac{u_3}{u_1u_2}\Big|
\Big)^{-\tb}\\
&\hskip 7cm \times |u_1|(|u_2|+|u_3|)du_1du_2du_3,
\end{align*}
where $F_1(x_1)$, $F_2(x_2)$ and $F_3(x_3)$ are bounded functions with
bounded supports. Thus, as $\epsilon_1, \epsilon_2,\epsilon_3\to 0,$
the limit of the first integral exists for each $x_1, x_2,$ and $x_3$ and,
moreover, is dominated by a fixed bounded function with compact
support. Therefore, the first integral converges in $L^2$ as
$\epsilon_1, \epsilon_2,\epsilon_3\to 0.$ The third integral can be
handled by the same way. To see the second integral, by the
condition (C1.c) and the assumption on $\mathcal K$ we observe that
the limit $\int_{\epsilon_2\leq |u_2|\leq 1}\int_{\epsilon_3\leq
|u_3|\leq 1}\mathcal K(u_1,u_2,u_3)du_2du_3$ exists as $\epsilon_2,
\epsilon_3\to 0,$ and is dominated by $C|u_1|^{-1}.$ This fact
together with the smoothness condition on $f$ implies the second
integral converges in $L^2$ as $\epsilon_1, \epsilon_2,\epsilon_3\to
0$ and the limit is dominated by a bounded function with compact
support. Similarly, the required results for the fourth and the last
integrals follow from the conditions (C1.d) and (C1.a),
respectively, together with the assumptions on $\mathcal K.$

Note that in fact $\mathcal K(u)$ is integrable over the sets
(ii) $|u_1|\leq 1, |u_2|\leq 1, |u_3|\geq 1$ and
(vii) $|u_1|\geq 1, |u_2|\geq 1, |u_3|\leq 1$.
Thus we have all the required results over these two sets.

Observe that
\begin{align*}
\int_{\Bbb R}\int_{\Bbb R}\int_{|u_3|\geq 1}|\mathcal K(u)f(x-u)|du
&\lesssim \int_{\Bbb R}\int_{\Bbb R}\int_{|u_3|\geq 1}
\frac{1}{|u_1| |u_2| |u_3|} \Big(\Big|\frac{u_1u_2}{u_3}\Big|+\Big|\frac{u_3}{u_1u_2}\Big|\Big)^{-\tb} \\
&\quad\times \frac{1}{(1+|x_1-u_1|)^2 (1+|x_2-u_2|)^2 (1+|x_3-u_3|)^2}du,
\end{align*}
which belongs to $L^2(\Bbb R^3).$ This implies the required results
over the corresponding sets (iv), (vi) and (viii).

To handle the integral over the set (iii) $|u_1|\leq 1, |u_2|\geq 1, |u_3|\leq 1$, inserting
\begin{align*}
f(x_1-u_1,x_2-u_2,x_3-u_3)
&=[f(x_1-u_1,x_2-u_2,x_3-u_3)-f(x_1,x_2-u_2,x_3-u_3)]\\
&\quad +[f(x_1,x_2-u_2,x_3-u_3)-f(x_1,x_2-u_2,x_3)]\\
&\quad +f(x_1,x_2-u_2,x_3)
\end{align*}
yields three integrals over the set (iii). The first two
integrals, by the condition (R) and the smoothness of $f,$ are
dominated by
\begin{eqnarray*}
F_1(x_1)F_3(x_3)\int_{ |u_1|\leq 1}\int_{|u_2|\geq 1}\int_{|u_3|\leq 1}
\frac{1}{|u_2| |u_3|}
\Big(\Big|\frac{u_1u_2}{u_3}\Big|+\Big|\frac{u_3}{u_1u_2}\Big|\Big)^{-\tb}
\frac{1}{(1+|x_2-u_2|)^2}du_1du_2du_3
\end{eqnarray*}
and
\begin{eqnarray*}
F_1(x_1)F_3(x_3)\int_{ |u_1|\leq 1}\int_{|u_2|\geq 1}\int_{|u_3|\leq 1}
\frac{1}{|u_1| |u_2|}
\Big(\Big|\frac{u_1u_2}{u_3}\Big|+\Big|\frac{u_3}{u_1u_2}\Big|\Big)^{-\tb}\frac{1}{(1+|x_2-u_2|)^2}du_1du_2du_3,
\end{eqnarray*}
where $F_1(x_1)$ and $F_3(x_3)$ are bounded functions with bounded
supports. Thus, we obtain a domination, independent of
$\epsilon_1,\epsilon_3$ and $N_2,$ by a function which belongs to
$L^2(\Bbb R^3),$ so the limits as $\epsilon_1,\epsilon_3 \to 0$ and
$N_2\to\infty$ exist. Condition (C1.d) with $\beta=0$
yields that the last integral is bounded by
\begin{eqnarray*}
F_1(x_1)F_3(x_3)\int_{|u_2|\geq 1}\frac{1}{|u_2| (1+|x_2-u_2|)^2}du_2,
\end{eqnarray*}
which belongs to $L^2(\Bbb R^3)$ and the limit as
$\epsilon_1,\epsilon_3 \to 0$ and $N_2\to\infty$ exists.

Finally, for the integral over the set (v) $|u_1|\geq 1, |u_2|\leq 1,|u_3|\leq 1$, by inserting
\begin{align*}
f(x_1-u_1,x_2-u_2,x_3-u_3)
&= [f(x_1-u_1,x_2-u_2,x_3-u_3)-f(x_1-u_1,x_2-u_2,x_3)]\\
&\quad +[f(x_1-u_1,x_2-u_2,x_3)-f(x_1-u_1,x_2,x_3)]\\
&\quad +f(x_1-u_1,x_2,x_3)
\end{align*}
and then applying the condition (R) for the first two integrals and
(C1.c) with $\alpha=0$ on the last integral, this integral is
dominated by
\begin{eqnarray*}
F_2(x_2)F_3(x_3)\int_{|u_1|\geq 1}\frac{1}{|u_1| (1+|x_1-u_1|)^2}du_1,
\end{eqnarray*}
where $F_2(x_2)$ and $F_3(x_3)$ are bounded functions with bounded
supports. The existence of the limit is concluded. The $L^2$
boundedness of $\mathcal K\ast f$ then follows from Theorem \ref{Thm
1.1}. \qed

\begin{rem}
As mentioned early in section 1, we have incidentally shown that $\mathcal K_\epsilon^N\ast f$
converges in $L^p$ norm and almost everywhere as
$\epsilon_1,\epsilon_2,\epsilon_3\to 0$ and $N_1,N_2,N_3\to\infty$
whenever $f$ is a smooth function with compact support. We also
point out that the condition (C1.b) is not used in the proof of
Corollary \ref{Cor 1.2}.
\end{rem}

\section{$L^p$ estimates for $1<p<\infty$}

In this section, we will prove Theorem \ref{Thm 1.3} and Theorem \ref{Thm 1.4}. The
main tools to show the $L^p$, $1<p<\infty,$ estimates are
\begin{itemize}
\item the $L^2$ boundedness of $\mathcal K\ast f;$
\item the Littlewood--Paley theory associated with Zygmund dilation;
\item the almost orthogonality argument.
\end{itemize}
We first recall the Littlewood--Paley theory. As mentioned in
section 1, to handle the $L^p$, $1<p<\infty,$ boundedenss of
operators, one only needs the continuous Littlewood-Paley square
function. To do this, let ${\mathcal S} (\Bbb R^i)$ denote the
Schwartz class in $\Bbb R^i$, $i=1, 2, 3$. We construct a function
defined on $\Bbb R^3$ by
\begin{equation}\label{eq 3.1}
\phi (x_1,x_2,x_3)=\phi^{(1)}(x)\phi^{(2)}(x_2,x_3),
\end{equation}
where $\phi^{(1)}\in {\mathcal S}(\Bbb R)$, $\phi^{(2)}\in {\mathcal
S}(\Bbb R^2)$ with the supports contained in the unit ball centered
at the origin in $\Bbb R^3,$ and satisfy
\begin{equation}\label{eq 3.2}
\sum _{j \in \Bbb Z}\vert \widehat{\phi^{(1)}}(2^{j}\xi _1
)\vert^2=1 \qquad\text{for all}\ \ \xi _1\in {\Bbb R}\backslash
\lbrace 0\rbrace,
\end{equation}
\begin{equation}\label{eq 3.3}
\sum _{k\in \Bbb Z}\vert \widehat{ \phi ^{(2)}}(2^{k} \xi_2,
2^{k}\xi_3 )\vert ^2= 1\qquad\text{for all}\ \ (\xi_2, \xi_3) \in
{\Bbb R^2}\backslash \lbrace (0, 0)\rbrace,
\end{equation} and the moment conditions
\begin{equation}\label{eq 3.4}
\int_{\Bbb R}x_1^{\alpha}\phi^{(1)}(x_1)dx_1=\int_{\Bbb R^2}
x_2^{\beta}x_3^{\gamma}\phi^{(2)}(x_2,x_3)dx_2dx_3=0 \qquad\text{for}\ \ 0\leq
\alpha, \beta,\gamma\leq 10.
\end{equation}

For $f\in L^p$, $1<p<\infty$,
{\it the continuous Littlewood--Paley square function $g_{\frak z}^c(f)$ of $f$ associated with the Zygmund dilation}
is defined by
\begin{equation*}
g_{\frak z}^c(f)(x) =\bigg\lbrace \sum _{j,k\in \Bbb Z}\vert (\phi
_{j,k}*f)(x)\vert ^2\bigg\rbrace^{{{1}\over{2}}},
\end{equation*}
where
\begin{equation}\label{eq 3.6}
\phi_{j,k}(x_1,x_2,x_3):=2^{-2(j+k)}\phi^{(1)}(2^{-j}x_1 )
\phi^{(2)}(2^{-k}x_2, 2^{-(j+k)}x_3).
\end{equation}
By taking the Fourier transform, it is easy to see that the
following Calder\'on's identity
\begin{equation}\label{eq 3.7}
f(x) =\sum _{j,k\in \Bbb Z} (\phi_{j,k}*\phi _{j,k}*f)(x)
\end{equation}
holds on $L^2(\Bbb R^3)$.
Using the $L^p$ boundedness of operators for $1<p<\infty$ in
\cite{RS}, as mentioned in section 1, we have
$$\bigg\|\sum _{(j,k)\in F} \epsilon(j,k)(\phi_{j,k}*f)\bigg\|_p\leq C\|f\|_p$$
for every sequence $\epsilon(j,k),$ taking the values 1 and $-1$,
where $F$ is any finite subset of $j, k\in \Bbb Z$.
By Khinchin's well-known inequality,
$$\|g_{\frak z}^c(f) \| _p\leq C_2\|f\|_p\qquad\text{for}\ \ 1<p<\infty.$$
This estimate together with Calder\'on's identity
on $L^2$ allows us to obtain the $L^p$ estimates of $g_{\frak z}^c$  for
$1<p<\infty$. Namely, there exist constants $C_1$ and $C_2$ such
that, for $1<p<\infty,$
\begin{equation}\label{eq 3.8}
C_1\Vert f \Vert _p\leq \Vert g_{\frak z}^c(f) \Vert _p\leq C_2\Vert
f\Vert_p.
\end{equation}

Now we turn to the proof of Theorem \ref{Thm 1.3}. First note that $\mathcal
K$ satisfies the conditions (C1.a) -- (C1.d) since (C2.b) implies
(C1.b) and (C1.d), as mentioned in section 1. Therefore, by Corollary
\ref{Cor 1.2}, the operator $\mathcal K\ast
f=\lim_{\epsilon_1,\epsilon_2,\epsilon_3\to 0 \atop N_1,N_2,N_3\to
\infty} \mathcal K_\epsilon^N\ast f$ is bounded on $L^2(\Bbb R^3).$
To obtain the $L^p$ boundedness of $\mathcal K\ast f,$ it suffices
to show this for all $f\in L^2\cap L^p$ since the subspace $L^2\cap
L^p$ is dense in $L^p$, $1<p<\infty.$ By the $L^p$ estimates of the
Littlewood--Paley square function given in (\ref{eq 3.8}), the $L^p$
boundedness of $\mathcal K\ast f$ will follow from the estimate
\begin{equation}\label{eq 3.9}
\Vert g_{\frak z}^c(\mathcal K\ast f)\Vert_p \lesssim \Vert f\Vert_p.
\end{equation}
To prove (\ref{eq 3.9}) for all $f\in L^2\cap L^p,$ using the fact
that $\mathcal K\ast f$ is bounded on $L^2,$ as mentioned above, and
Calder\'on's identity on $L^2$ given in (\ref{eq 3.7}), we write
$$\phi _{j,k}\ast (\mathcal K\ast f)(x_1,x_2,x_3)=\sum _{j',k'\in \Bbb Z}
(\phi _{j,k}\ast\mathcal K\ast \phi_{j',k'}\ast\phi _{j',k'}\ast
f)(x_1,x_2,x_3).$$ The proof of Theorem \ref{Thm 1.3} now follows from the
following almost orthogonality argument.

\begin{pro}\label{Prop 3.1}
Suppose that $\phi_{j,k}$ is defined as in \eqref{eq 3.6} and $\mathcal K$
is a function on $\Bbb R^3$ satisfying the conditions {\rm(R)} and {\rm(C2.a)}
-- {\rm(C2.c)}. Then, for $\lambda=\12 \min(\ta,\tb),$
\begin{align*}
\big| (\phi_{j,k} \ast \mathcal K \ast \phi_{j',k'})(x_1,x_2,x_3) \big|
&\le C2^{-|j-j'|}2^{-|k-k'|}\frac{2^{-(j\vee j')}}{(1+2^{-(j\vee
j')}|x_1|)^{1+\lambda}}\\
&\quad \times \frac{2^{-(k\vee k')}}{(1+2^{-(k\vee
k')}|x_2|)^{1+\lambda }}\frac{2^{-(j\vee j')-(k\vee
k')}}{(1+2^{-(j\vee j')-(k\vee k')}|x_3|)^{1+\lambda }},
\end{align*}
where the constant C depends only on $\lambda$ and $\mathcal K\ast
f$ is defined for $f\in L^2$ as in\break Corollary \ref{Cor 1.2}, and
$j\vee j'$ means $\max(j, j')$.
\end{pro}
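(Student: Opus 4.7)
The plan is a two-step almost orthogonality argument adapted to the Zygmund multi-parameter setting. By the symmetry of the statement in the pairs $(j,j')$ and $(k,k')$, I restrict attention to the case $j\le j'$ and $k\le k'$; the remaining three sign configurations are handled identically after interchanging the roles of $\phi_{j,k}$ and $\phi_{j',k'}$, whose qualitative properties in \eqref{eq 3.2}--\eqref{eq 3.4} are the same. Under this assumption $j\vee j'=j'$ and $k\vee k'=k'$. Writing $F:=\mathcal K\ast \phi_{j',k'}$, we have
\begin{equation*}
(\phi_{j,k}\ast \mathcal K\ast \phi_{j',k'})(x)=\int \phi_{j,k}(y)F(x-y)\,dy.
\end{equation*}

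First, I would establish Zygmund-adapted size and smoothness estimates for $F$. Using the regularity condition (R) together with the cancellation conditions (C2.a)--(C2.c) on $\mathcal K$, and the vanishing moments of $\phi_{j',k'}$ from \eqref{eq 3.4}, I expect $F$ to obey a size bound by the product kernel at scales $(2^{j'}, 2^{k'}, 2^{j'+k'})$, together with H\"older-type estimates for the single differences $\Delta_{y_1,h_1}F$ (of exponent $\lambda$ in $|h_1|/2^{j'}$) and the double differences $\Delta_{y_2,h_2}\Delta_{y_3,h_3}F$ (of exponents $\lambda$ in $|h_2|/2^{k'}$ and in $|h_3|/2^{j'+k'}$), as well as their product. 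These are obtained by writing $F(y)=\int \mathcal K(y-z)\phi_{j',k'}(z)\,dz$, splitting the $z$-integration according to how $y-z$ compares to the scales $(2^{j'},2^{k'},2^{j'+k'})$, invoking (R) pointwise outside the singular set, and using a Taylor expansion of $\phi_{j',k'}$ together with (C2.a)--(C2.c) in the ranges where $\mathcal K$ fails to be absolutely integrable.

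Second, using the vanishing moments of $\phi_{j,k}$ from \eqref{eq 3.4} (namely $\int \phi^{(1)}(y_1)\,dy_1=0$ and $\int \phi^{(2)}(y_2,y_3)\,dy_2dy_3=0$), I write
\begin{equation*}
(\phi_{j,k}\ast F)(x)=\int \phi_{j,k}(y)\bigl[F(x-y)-F(x_1,x_2-y_2,x_3-y_3)-F(x_1-y_1,x_2,x_3)+F(x)\bigr]\,dy.
\end{equation*}
Bounding the bracketed quantity by the Zygmund-adapted smoothness of $F$ from the first step, and noting that $\phi_{j,k}$ is effectively supported where $|y_1|\lesssim 2^j$, $|y_2|\lesssim 2^k$, $|y_3|\lesssim 2^{j+k}$, I extract the gain $(2^j/2^{j'})^{\lambda}(2^k/2^{k'})^{\lambda}=2^{-\lambda(j'-j)-\lambda(k'-k)}$, while the outer $y$-integration, combined with the decay profile of $F$ transferred to the $x$-variable, produces the required spatial decay at scales $(2^{j'},2^{k'},2^{j'+k'})$.

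The main obstacle is the first step: establishing Zygmund-adapted smoothness for $F=\mathcal K\ast \phi_{j',k'}$. The Zygmund factor $(|y_1y_2/y_3|+|y_3/(y_1y_2)|)^{-\theta_2}$ in (R) is not a product kernel in the three coordinates, and its interplay with the support and moment cancellation of $\phi_{j',k'}$ forces a delicate case analysis on the relative sizes of $|y_1-z_1|$, $|y_2-z_2|$, $|y_3-z_3|$ versus $2^{j'}$, $2^{k'}$, $2^{j'+k'}$. In the ranges where the kernel is not absolutely integrable, the cancellation conditions (C2.a)--(C2.c) are the natural substitutes; the mixed Zygmund factor depending on the truncation endpoints in (C2.b) is precisely what permits the required passage from pointwise bounds to smoothing bounds. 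Absorbing a small amount of the regularity exponent $\theta_1$ and the Zygmund exponent $\theta_2$ into the spatial decay profile yields the final $\lambda=\tfrac12\min(\theta_1,\theta_2)$; in particular this explains the factor $\tfrac12$ in its definition.
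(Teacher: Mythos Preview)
Your approach is a legitimate almost-orthogonality scheme, but it is \emph{not} the one the paper uses, and the paper's route is substantially shorter. The paper does not establish any smoothness of $F=\mathcal K\ast\phi_{j',k'}$. Instead it exploits the commutativity of convolution to write
\[
\phi_{j,k}\ast\mathcal K\ast\phi_{j',k'}=\mathcal K\ast(\phi_{j,k}\ast\phi_{j',k'}),
\]
and first collapses $\psi:=\phi_{j,k}\ast\phi_{j',k'}$. Because the $\phi$'s are smooth, compactly supported and have many vanishing moments, an elementary Taylor-expansion argument (Lemma~\ref{almost}) shows that $\psi$ has the same tensor structure $\psi^{(1)}(x_1)\psi^{(2)}(x_2,x_3)$, the same support and moment properties as $\phi_{j\vee j',\,k\vee k'}$, but with an extra factor $2^{-|j-j'|L}2^{-|k-k'|L}$ for any $L$. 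Then Lemma~\ref{Lem 3.2}---which only asserts a \emph{size} bound for $\mathcal K\ast(\phi^{(1)}\otimes\phi^{(2)})$---is applied once to $\mathcal K\ast\psi$, and the proposition follows immediately by Zygmund-dilation invariance of (R) and (C2.a)--(C2.c).

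The practical differences are: (i) your Step~1 requires proving H\"older regularity of $\mathcal K\ast\phi_{j',k'}$ in addition to size, which roughly doubles the eight-case analysis of Lemma~\ref{Lem 3.2} and is exactly the ``main obstacle'' you flag; the paper avoids this entirely. (ii) Your scheme naturally produces the gain $2^{-\lambda|j-j'|}2^{-\lambda|k-k'|}$ with $\lambda=\tfrac12\min(\theta_1,\theta_2)<1$, because the smoothness of $\mathcal K\ast\phi_{j',k'}$ you can extract is limited by the kernel regularity $\theta_1$; the paper gets the full exponent $1$ (indeed any $L$) because the almost-orthogonality happens between two Schwartz functions before $\mathcal K$ enters. (iii) Your symmetry reduction to $j\le j'$, $k\le k'$ glosses over the mixed case $j\le j'$, $k>k'$, where one must borrow the $x_1$-moment from $\phi_{j,k}$ and the $(x_2,x_3)$-moment from $\phi_{j',k'}$; this is not a pure relabelling and would need a separate argument in your framework, whereas in the paper it is absorbed into the single estimate for $\phi_{j,k}\ast\phi_{j',k'}$.
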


Assuming Proposition \ref{Prop 3.1} for the moment, we then observe
that
$$|[\phi_{j,k}\ast \mathcal K\ast\phi_{j',k'}\ast(\phi_{j',k'}\ast
f)](x_1,x_2,x_3)|\leq C 2^{-|j-j'|}2^{-|k-k'|}\mathcal
M_s(\phi_{j',k'}\ast f)(x_1,x_2,x_3),$$ where $\mathcal M_s$ is the
strong maximal function on $\Bbb R^3.$ H\"older's inequality implies
\begin{eqnarray*}
\|g_{\frak z}^c(\mathcal K\ast f)\|_p &=&\bigg\|\bigg\lbrace
\sum_{j,k}\vert \phi _{j,k}\ast \mathcal K\ast f\vert^2\bigg\rbrace^{{{1}\over{2}}}\bigg\|_p
   \leq C \bigg\|\bigg\{\sum_{j',k'} |\mathcal M_s(\phi_{j',k'}\ast f)|^2\bigg\}^{\frac{1}{2}}\bigg\|_p \\
&\leq& C\bigg\|\bigg\{\sum_{j',k'} |\phi_{j',k'}\ast
f|^2\bigg\}^{\frac{1}{2}
 }\bigg\|_p \leq C \|f\|_p,
\end{eqnarray*}
where we use Fefferman--Stein's vector-valued maximal
inequality and the Littlewood--Paley square function estimate for
$L^p$, $1<p<\infty,$ in the last two inequalities, respectively.

To finish the proof of Theorem \ref{Thm 1.3}, we only need to show Proposition
\ref{Prop 3.1}, whose proof follows from the following lemma.

\begin{lem}\label{Lem 3.2}
Suppose that $\phi^{(1)}$ and $\phi^{(2)}$ satisfy the conditions
\eqref{eq 3.1} -- \eqref{eq 3.4} and $\mathcal K$ is a function on
$\Bbb R^3$ satisfying the conditions {\rm(R)} and {\rm(C2.a)} -- {\rm(C2.c)}. Then,
for $\lambda=\12 \min(\ta,\tb),$
\begin{eqnarray*}
|\mathcal K\ast (\phi^{(1)}\otimes \phi^{(2)})(x_1,x_2,x_3)|\leq
\frac{C_\lambda}{(1+|x_1|)^{1+\lambda} (1+|x_2|)^{1+\lambda}
(1+|x_3|)^{1+\lambda }},
\end{eqnarray*}
where $C_\lambda$ is the constant depending only on $\lambda.$
\end{lem}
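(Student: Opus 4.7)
The plan is to bound $F(x_1,x_2,x_3) := \bigl[\mathcal K \ast (\phi^{(1)}\otimes \phi^{(2)})\bigr](x_1,x_2,x_3)$ by splitting the $x$-space into the eight regions determined by whether each $|x_i|$ is $\le 4$ or $> 4$, and in each region invoking the moment conditions on $\phi^{(1)},\phi^{(2)}$ together with the appropriate regularity (R) and cancellation (C2.a)--(C2.c) on $\mathcal K$. Because $\phi^{(1)}$ and $\phi^{(2)}$ are supported in the unit ball, the $y$-integration is confined to $|y_1|\le 1,\,|(y_2,y_3)|\le 1$, so whenever $|x_i|\ge 2$ we have $|x_i-y_i|\ge |x_i|/2\ge 2|y_i|$, which is exactly the geometric condition needed to apply (R) in the $x_i$-variable.

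The core difficulty lies in the region where all three $|x_i|\ge 4$: here the desired bound is $|x_1|^{-1-\lambda}|x_2|^{-1-\lambda}|x_3|^{-1-\lambda}$. I would exploit $\int\phi^{(1)}\,dy_1=0$ to introduce a first-order difference in $x_1$, and $\int\phi^{(2)}\,dy_2\,dy_3 = \int y_2\phi^{(2)}\,dy_2\,dy_3=\int y_3\phi^{(2)}\,dy_2\,dy_3=0$ to produce first-order differences in $x_2$ and $x_3$ via a telescoping decomposition of $\phi^{(2)}$ into two one-variable increments. The mixed form of (R) with $\alpha=1,\,\beta+\gamma=1$ then yields a pointwise bound of schematic shape
\[
|\mathrm{differences\ of\ }\mathcal K|\lesssim \frac{|y_1|^{\theta_1}\bigl(|y_2|^{\theta_1}+|y_3|^{\theta_1}\bigr)}{|x_1|^{1+\theta_1}|x_2|^{1+\theta_1}|x_3|\,\bigl(|x_1x_2/x_3|+|x_3/x_1x_2|\bigr)^{\theta_2}},
\]
after which one must convert the Zygmund factor $(|x_1 x_2/x_3|+|x_3/x_1 x_2|)^{-\theta_2}$ into additional decay in the ``missing'' variable. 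Splitting $|x_3|\lessgtr |x_1 x_2|$ and using $\theta_2\ge 2\lambda$ transfers a portion $\lambda$ of the Zygmund exponent onto $|x_3|$, producing the required $|x_3|^{-1-\lambda}$; the price is that the $|x_1|$- and $|x_2|$-exponents drop from $1+\theta_1$ to $1+\lambda$. This is precisely why the lemma requires $\lambda=\tfrac12\min(\theta_1,\theta_2)$.

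The remaining mixed cases are handled by the same recipe with fewer difference operators. When one or two of the $|x_i|$ are $\le 4$, the corresponding bump $\phi^{(1)}$ or part of $\phi^{(2)}$ is not expanded via its moments; instead, its integral against the kernel is controlled by the cancellation conditions (C2.b), (C2.c), or (C2.a). For instance, when $|x_1|\le 4$ but $|x_2|,|x_3|\ge 4$, I would use the $\phi^{(2)}$-moments to produce a $\Delta_{x_2}\Delta_{x_3}\mathcal K$ difference, then integrate in $y_1$ against $\phi^{(1)}$ and invoke (C2.b) to absorb the pieces of $\mathcal K$ that are not pointwise integrable on their own; symmetrically, (C2.c) handles the case $|x_2|,|x_3|\le 4<|x_1|$. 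The all-small region $|x_1|,|x_2|,|x_3|\le 4$ needs only the crude $|F|\le C$, which follows by Taylor-subtraction on $\phi^{(1)},\phi^{(2)}$ combined with (C2.a). I expect the main obstacle to be the combinatorial bookkeeping of the Zygmund factor: because $(|x_1 x_2/x_3|+|x_3/x_1 x_2|)^{-\theta_2}$ couples all three variables, each integration must be split into the two regimes $|x_1 x_2|\lessgtr |x_3|$, and the exponents carefully rebalanced so that product decay of exponent $1+\lambda$ in each of the three variables really emerges on the right-hand side.
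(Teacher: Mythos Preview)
Your proposal is correct and follows essentially the same route as the paper: an eight-case split according to the size of each $|x_i|$, using the moment conditions on $\phi^{(1)},\phi^{(2)}$ to generate difference operators on $\mathcal K$, invoking (R) when the relevant coordinates are large and (C2.a)--(C2.c) when they are small, and then trading part of the Zygmund factor $(|x_1x_2/x_3|+|x_3/x_1x_2|)^{-\theta_2}$ for the missing coordinate decay. One small point of care: in the case $|x_1|\le 4$, $|x_2|,|x_3|\ge 4$ you cannot use a genuine double difference $\Delta_{x_2}\Delta_{x_3}\mathcal K$ (neither (R) nor (C2.b) covers $\beta=\gamma=1$); instead, as in your all-large case, telescope $\mathcal K(u_1,x_2-u_2,x_3-u_3)-\mathcal K(u_1,x_2,x_3)$ into two single differences and apply (C2.b) with $\beta=1,\gamma=0$ and $\beta=0,\gamma=1$ separately, after first splitting off $\phi^{(1)}(x_1-u_1)=[\phi^{(1)}(x_1-u_1)-\phi^{(1)}(x_1)]+\phi^{(1)}(x_1)$ so that the $u_1$-integral in the second piece is over a fixed interval.
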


\begin{proof} For simplicity, let
$S=\lim_{\epsilon_1,\epsilon_2,\epsilon_3 \to 0 \atop N_1,N_2,N_3
\to \infty}\int_{\epsilon_1 \leq |x-u|\leq N_1}\int_{\epsilon_2 \leq
|x_2-u_2|\leq N_2}\int_{\epsilon_3 \leq |x_3-u_3|\leq N_3} \mathcal
K(x_1-u_1,x_2-u_2,x_3-u_3)\phi^{(1)}(u_1)\phi^{(2)}(u_2,u_3)du_3du_2du_1$. We consider the
following eight cases.

Case 1. $|x_1|\geq 3, |x_2| \geq 3, |x_3| \geq 3$. For this case, we use
the cancellation conditions in (\ref{eq 3.4}) to write
\begin{align*}
S
&=\lim_{\epsilon_1,\epsilon_2,\epsilon_3 \to 0 \atop N_1,N_2,N_3
\to \infty}\int_{\epsilon_1 \leq |x_1-u_1|\leq N_1}\int_{\epsilon_2 \leq
|x_2-u_2|\leq N_2}\int_{\epsilon_3\leq |x_3-u_3|\leq N_3} \big[\mathcal
K(x_1-u_1,x_2-u_2,x_3-u_3)\\
&\hskip 4cm - \mathcal
K(x_1,x_2-u_2,x_3-u_3)-\mathcal K(x_1-u_1,x_2,x_3)+\mathcal K(x_1,x_2,x_3)\big]\\
&\hskip 4cm \times\phi^{(1)}(u_1)\phi^{(2)}(u_2,u_3)du_3du_2du_1.
\end{align*}
Note that $ \mathcal K(x_1-u_1,x_2-u_2,x_3-u_3)-\mathcal K(x_1,x_2-u_2,x_3-u_3)-
\big(\mathcal K(x_1-u_1,x_2,x_3)-\mathcal K(x_1,x_2,x_3)\big) = \Delta_{x_2,-u_2} \Delta_{x_1,-u_1}
\K(x_1,x_2,x_3-u_3)+\Delta_{x_3,-u_3} \Delta_{x_1,-u_1} \K (x_1,x_2,x_3).$ Thus, by the condition (R) with
$\alpha=\beta=1, \gamma=0$ and $\alpha=\gamma=1,\beta=0$, respectively,
\begin{align*}
|S|
&\lesssim \int_{|u_1| \leq 1}\int_{|u_2| \leq 1}\int_{|u_3| \leq 1}
\frac{|u_1|^\ta(|u_2|^\ta+|u_3|^\ta)}{|x_1|^{1+ \ta} |x_2|^{1+\ta}
|x_3|}\Big(\Big|\frac{x_1x_2}{x_3}\Big|+\Big|
\frac{x_3}{x_1x_2}\Big|\Big)^{-\lambda}du_3du_2du_1\\
&\lesssim \frac{1}{(1+|x_1|)^{1+\lambda} (1+|x_2|)^{1+\lambda}
(1+|x_3|)^{1+\lambda }}.
\end{align*}

Case 2. $|x_1|\geq 3, |x_2| \geq 3, |x_3| < 3$. By the cancellation
condition of $\phi^{(1)},$
\begin{align*}
S
&=\lim_{\epsilon_1,\epsilon_2,\epsilon_3 \to 0 \atop N_1,N_2,N_3
\to \infty}\int_{\epsilon_1 \leq |x_1-u_1|\leq N_1}\int_{\epsilon_2 \leq
|x_2-u_2|\leq N_2}\int_{\epsilon_3 \leq
|x_3-u_3|\leq N_3} \Delta_{x_1,-u_1} \mathcal K(x_1,x_2-u_2,x_3-u_3) \\
&\hskip 9.5cm \times\phi^{(1)}(u_1)\phi^{(2)}(u_2,u_3)du_3du_2du_1.
\end{align*}
Therefore, by the condition (R) with $\alpha=1$ and
$\beta=\gamma=0,$ we obtain
\begin{align*}
|S|
&\lesssim \int_{|u_1| \leq 1}\int_{|u_2| \leq 1}\int_{|u_3| \leq 1}
\frac{|u_1|^\ta}{|x_1|^{1+\ta} |x_2-u_2| |x_3-u_3|}\\
&\hskip 4cm\times \Big( \Big|\frac{x_1(x_2-u_2)}{x_3-u_3}\Big| + \Big|\frac{x_3-u_3}{x_1(x_2-u_2)}\Big|\Big)^{-\lambda}du_3du_2du_1\\
&\lesssim \int_{|u_1| \leq 1}\int_{|u_2| \leq 1}\int_{|u_3| \leq 1}
\frac{1}{|x_1|^{1+\ta} |x_2| |x_3-u_3|}\Big|\frac{x_1x_2}{x_3-u_3}\Big|^{-\lambda}du_3du_2du_1\\
&\lesssim \frac{1}{(1+|x_1|)^{1+\lambda} (1+|x_2|)^{1+\lambda}
(1+|x_3|)^{1+\lambda }}.
\end{align*}

Case 3. $|x_1|\geq 3, |x_2| < 3, |x_3|  \geq 3.$ The same expression for
$S$ as in  case 2 yields
\begin{align*}
|S|
&\lesssim \int_{|u_1| \leq 1}\int_{|u_2| \leq 1}\int_{|u_3| \leq 1}
\frac{|u_1|^\ta}{|x_1|^{1+\ta} |x_2-u_2|
|x_3|}\Big|\frac{x_3}{x_1(x_2-u_2)}\Big|^{-\lambda}du_3du_2du_1\\
&\lesssim \frac{1}{(1+|x_1|)^{1+\lambda} (1+|x_2|)^{1+\lambda} (1+|x_3|)^{1+\lambda }}.
\end{align*}

Case 4. $|x_1|\geq 3, |x_2| < 3, |x_3| < 3.$ Using the cancellation
condition of $\phi^{(1)}$, we write
\begin{align*}
S
&= \lim_{\epsilon_1,\epsilon_2,\epsilon_3 \to 0 \atop N_1,N_2,N_3
\to \infty}\int_{\epsilon_1 \leq |x_1-u_1|\leq N_1}\int_{\epsilon_2 \leq
|u_2|\leq 4}\int_{\epsilon_3 \leq |u_3|\leq 4}
\big[\mathcal K(x_1-u_1,u_2,u_3)-\mathcal K(x_1,u_2,u_3)\big]\\
&\hskip 4.5cm\times \phi^{(1)}(u_1)\big(\phi^{(2)}(x_2-u_2,x_3-u_3)-\phi^{(2)}(x_2,x_3)\big)du_3du_2du_1 \\
&\qquad+\lim_{\epsilon_1,\epsilon_2,\epsilon_3 \to 0 \atop N_1,N_2,N_3
\to \infty}\int_{\epsilon_1 \leq |x_1-u_1|\leq N_1}\int_{\epsilon_2 \leq
|u_2|\leq 4}\int_{\epsilon_3 \leq |u_3|\leq
4}\big[\mathcal K(x_1-u_1,u_2,u_3)-\mathcal K(x_1,u_2,u_3)\big]\\
&\hskip 4.5cm\times \phi^{(1)}(u_1)\phi^{(2)}(x_2,x_3)du_3du_2du_1.
\end{align*}
By the condition (R) with $\alpha=1,\beta=\gamma=0$ for the first
integral, and the cancellation condition (C2.c) with $\alpha=1$ for
the second integral,
\begin{align*}
|S|
&\lesssim \int_{|u_1| \leq 1}\int_{|u_2| \leq 4}\int_{|u_3| \leq 4}
\frac{|u_1|^\ta}{|x_1|^{1+\ta} |u_2|
|u_3|}\Big(\Big|\frac{x_1u_2}{u_3}\Big|+\Big|\frac{u_3}{x_1u_2}\Big| \Big)^{-\tb}
(|u_2|+|u_3|)du_3du_2du_1\\
&\quad + \int_{|u_1| \leq 1}
\frac{|u_1|^\ta}{|x_1|^{1+\ta}} du_1\\
&\lesssim \frac{1}{|x_1|^{1+\ta}}\\
&\lesssim
\frac{1}{(1+|x_1|)^{1+\lambda} (1+|x_2|)^{1+\lambda} (1+|x_3|)^{1+\lambda
}}.
\end{align*}

Case 5. $|x_1|< 3, |x_2| \geq 3, |x_3| \geq 3$. Similar to case 4, using
the cancellation condition of $\phi^{(2)},$ we write
\begin{align*}
S
&= \lim_{\epsilon_1,\epsilon_2,\epsilon_3 \to 0 \atop N_1,N_2,N_3
\to \infty}\int_{\epsilon_1 \leq |u_1|\leq 4}\int_{\epsilon_2 \leq
|x_2-u_2|\leq N_2}\\
&\hskip 2.5cm \int_{\epsilon_3 \leq |x_3-u_3|\leq N_3}
\big[\mathcal K(u_1,x_2-u_2,x_3-u_3)-\mathcal K(u_1,x_2,x_3)\big]\\
&\hskip 5cm \times \big(\phi^{(1)}(x_1-u_1)-\phi^{(1)}(x_1)\big)\phi^{(2)}(u_2,u_3)du_3du_2du_1 \\
&\qquad+\lim_{\epsilon_1,\epsilon_2,\epsilon_3 \to 0 \atop N_1,N_2,N_3
\to \infty}\int_{\epsilon_1 \leq |u_1|\leq 4}\int_{\epsilon_2\leq
|x_2-u_2|\leq N_2}\\
&\hskip 3cm \int_{\epsilon_3 \leq |x_3-u_3|\leq
N_3} \big[\mathcal K(u_1,x_2-u_2,x_3-u_3)-\mathcal K(u_1,x_2,x_3)\big]\\
&\hskip 5.5cm\times \phi^{(1)}(x_1)\phi^{(2)}(u_2,u_3)du_3du_2du_1.
\end{align*}
Note that $\mathcal K(u_1,x_2-u_2,x_3-u_3)-\mathcal K(u_1,x_2,x_3)=\Delta_{x_2,-u_2} \K(u_1,x_2,x_3)+
\Delta_{x_3,-u_3} \K(u_1,x_2,x_3)$ and $\mathcal K(u_1,x_2-u_2,x_3-u_3)-\mathcal K(u_1,x_2,x_3)=\Delta_{x_2,-u_2}
\K(u_1,x_2,x_3-u_3) +\Delta_{x_3,-u_3} \K(u_1,x_2,x_3)$. Thus, using the condition (R) on
$\mathcal K$, the smoothness of $\phi^{(1)}$ for the first integral,
and the cancellation conditions (C2.b) with $\beta=1, \gamma=0$ and
$\beta=0, \gamma=1,$ respectively, for the second integral, and
applying the dominated convergence theorem, we obtain
\begin{align*}
|S|
&\lesssim \int_{|u_1|\le 4}\int_{|u_2|\le 1}\int_{|u_3|\le 1}
\bigg(\frac{|u_2|^\ta}{|u_1| |x_2|^{1+\ta} |x_3|}+\frac{|u_3|^\ta}{|u_1| |x_2|
|x_3|^{1+\ta}}\Big)\\
&\hskip 4cm\times \Big(\Big|\frac{u_1x_2}{x_3}\Big|+\Big|\frac{x_3}{u_1x_2}\Big|\Big)^{-\lambda}|u_1|du_3du_2du_1\\
&\quad + \int_{|u_2| \leq 1}\int_{|u_3| \leq 1}
\bigg(\frac{|u_2|^\ta}{|x_2|^{1+\ta} |x_3|}+\frac{|u_3|^\ta}{|x_2|
|x_3|^{1+\ta}}\Big)\Big (\Big|\frac{4x_2}{x_3}\Big|+\Big|\frac{x_3}{4x_2}\Big|
\Big)^{-\lambda}du_3 du_2\\
&\lesssim \frac{1}{(1+|x_1|)^{1+\lambda} (1+|x_2|)^{1+\lambda} (1+|x_3|)^{1+\lambda
}}.
\end{align*}

Case 6. $|x_1|< 3, |x_2| \geq 3, |x_3|< 3.$ Note that
\begin{align*}
S &= \lim_{\epsilon_1,\epsilon_2,\epsilon_3 \to 0 \atop N_1,N_2,N_3
\to \infty}\int_{\epsilon_1 \leq |u_1|\leq 4}\int_{\epsilon_2 \leq
|x_2-u_2|\leq N_2}\int_{\epsilon_3 \leq
|x_3-u_3|\leq N_3} \mathcal K(u_1,x_2-u_2,x_3-u_3)\\
&\hskip 6cm\times \big(\phi^{(1)}(x_1-u_1)-\phi^{(1)}(x_1)\big) \phi^{(2)}(u_2,u_3)du_3du_2du_1\\
&\qquad+\lim_{\epsilon_1,\epsilon_2,\epsilon_3 \to 0 \atop N_1,N_2,N_3
\to \infty}\int_{\epsilon_1 \leq |u_1|\leq 4}\int_{\epsilon_2 \leq
|x_2-u_2|\leq N_2}\int_{\epsilon_3 \leq
|x_3-u_3|\leq N_3} \mathcal K(u_1,x_2-u_2,x_3-u_3)\\
&\hskip 6cm\times \phi^{(1)}(x_1)\phi^{(2)}(u_2,u_3)du_3du_2du_1.
\end{align*}
By the condition (R) with $\alpha=\beta=\gamma=0$ and the smoothness
condition of $\phi^{(1)}$ on the first integral, the condition
(C2.b) with $\beta=\gamma=0$ for the second integral, and the
dominated convergent theorem,
\begin{align*}
|S|
&\lesssim \int_{|u_1|\le 4}\int_{|u_2|\le 1}\int_{|u_3|\le 1}\frac{1}{|u_1| |x_2|
|x_3-u_3|}\Big(\Big|\frac{u_1x_2}{x_3-u_3}\Big|+\Big|\frac{x_3-u_3}{u_1x_2}\Big|
\Big)^{-\lambda}|u_1|du_3du_2du_1\\
&\quad +\int_{|u_2| \leq 1}\int_{|u_3| \leq 1} \frac{1}{|x_2|
|x_3-u_3|}\Big(\Big|\frac{4x_2}{x_3-u_3}\Big|+\Big|\frac{x_3-u_3}{4x_2}\Big|
\Big)^{-\lambda}du_3du_2\\
&\lesssim \frac{1}{|x_2|^{1+\lambda}}\\
&\lesssim \frac{1}{(1+|x|)^{1+\lambda} (1+|x_2|)^{1+\lambda} (1+|x_3|)^{1+\lambda
}}.
\end{align*}

Case 7. $|x_1|< 3, |x_2| < 3, |x_3|  \geq 3$. The required estimate
follows directly from the condition (R):
\begin{align*}
|S|
&\lesssim \int_{|u_1| \leq 1}\int_{|u_2| \leq 1}\int_{|u_3| \leq 1}
\frac{1}{|x_1-u_1| |x_2-u_2| |x_3|}\Big|\frac{x_3}{(x_1-u_1)(x_2-u_2)}\Big|^{-\lambda}du_3du_2du_1\\
&\lesssim \frac{1}{|x_3|^{1+\lambda}}\\
&\lesssim \frac{1}{(1+|x_1|)^{1+\lambda} (1+|x_2|)^{1+\lambda} (1+|x_3|)^{1+\lambda}}.
\end{align*}

Case 8. $|x_1|< 3, |x_2| < 3, |x_3| < 3$. Inserting
\begin{align*}
&\phi^{(1)}(x_1-u_1)\phi^{(2)}(x_2-u_2,x_3-u_3)\\
&\qquad = [\phi^{(1)}(x_1-u_1)-\phi^{(1)}(x_1)][\phi^{(2)}(x_2-u_2,x_3-u_3)-\phi^{(2)}(x_2,x_3)]\\
&\qquad\quad+ \phi^{(1)}(x_1)[\phi^{(2)}(x_2-u_2,x_3-u_3)-\phi^{(2)}(x_2,x_3)]\\
&\qquad\quad+ [\phi^{(1)}(x_1-u_1)-\phi^{(1)}(x_1)]\phi^{(2)}(x_2,x_3)
+\phi^{(1)}(x_1)\phi^{(2)}(x_2,x_3),
\end{align*}
we write
\begin{align*}
S
&= \lim_{\epsilon_1,\epsilon_2,\epsilon_3 \to 0 \atop N_1,N_2,N_3
\to \infty}\int_{\epsilon_1 \leq |u_1|\leq 4}\int_{\epsilon_2 \leq
|u_2|\leq 4}\int_{\epsilon_3 \leq |u_3|\leq 4}
\mathcal K(u_1,u_2,u_3) \\
&\hskip 2cm\times
\Big\{[\phi^{(1)}(x_1-u_1)-\phi^{(1)}(x_1)][\phi^{(2)}(x_2-u_2,x_3-u_3)-\phi^{(2)}(x_2,x_3)]\\
&\hskip 2.7cm + \phi^{(1)}(x_1)[\phi^{(2)}(x_2-u_2,x_3-u_3)-\phi^{(2)}(x_2,x_3)]\\
&\hskip 2.7cm + [\phi^{(1)}(x_1-u_1)-\phi^{(1)}(x_1)]\phi^{(2)}(x_2,x_3)\\
&\hskip 2.7cm +\phi^{(1)}(x_1)\phi^{(2)}(x_2,x_3)\Big\}du_3du_2du_1
\end{align*}
as four integrals. Using the condition (R) with
$\alpha=\beta=\gamma=0$, the smoothness condition of $\phi^{(1)}$
for the first integral, the cancellation conditions (C2.b), (C2.c),
(C2.a) for the last three integrals, and the dominated
convergent theorem, we obtain
\begin{align*}
|S|
&\lesssim \int_{|u_1| \leq 4}\int_{|u_2| \leq 4}\int_{|u_3| \leq 4}
\frac{1}{|u_1| |u_2|
|u_3|}\Big(\Big|\frac{u_1u_2}{u_3}\Big|+\Big|\frac{u_3}{u_1u_2}\Big|
\Big)^{-\tb} |u_1|(|u_2|+|u_3|)du_3du_2du_1\\
&\quad +\int_{|u_2| \leq 4}\int_{|u_3| \leq 4} \frac{1}{|u_2|
|u_3|}\Big(\Big|\frac{4u_2}{u_3}\Big|+\Big|\frac{u_3}{4u_2}\Big|
\Big)^{-\tb}(|u_2|+|u_3|)du_3du_2 \\
&\quad +\int_{|u_1| \leq 4}
\frac{1}{|u_1|} |u_1|du_1+1 \\
&\lesssim 1\\
&\lesssim \frac{1}{(1+|x_1|)^{1+\lambda}
(1+|x_2|)^{1+\lambda} (1+|x_3|)^{1+\lambda }}.
\end{align*}
The  proof of Lemma \ref{Lem 3.2} is completed.
\end{proof}

Recall that
$\phi_{j,k}(u_1,u_2,u_3)=2^{-2(j+k)}\phi^{(1)}(2^{-j}u_1)\phi^{(2)}(2^{-k}u_2,2^{-(j+k)}u_3)$,
and the assumptions on $\mathcal K$ are invariant with respect to
Zygmund dilation. By Lemma \ref{Lem 3.2}, we 
have the following estimate
\begin{equation*}
\big| (\mathcal K \ast \phi_{j,k})(x_1,x_2,x_3) \big| \leq
C\frac{2^{-j}}{(1+2^{-j}
|x_1|)^{1+\lambda}}\frac{2^{-k}}{(1+2^{-k}|x_2|)^{1+\lambda
}}\frac{2^{-(j+k)}}{(1+2^{-(j+k)}|x_3|)^{1+\lambda }}.
\end{equation*}
Now the proof of Proposition \ref{Prop 3.1} follows from the above
estimate with replacing $\phi_{j,k}$ by $\phi_{j,k} \ast
\phi_{j',k'}.$ Note that, by Lemma \ref{almost} given below, $\phi_{j,k} \ast \phi_{j',k'}$ satisfies the same properties as
$\phi_{j\vee j', k\vee k'}$ but with the bound
$C2^{-|j-j'|}2^{-|k-k'|}.$
Thus, the proof of Proposition \ref{Prop 3.1} follows and hence Theorem \ref{Thm 1.3} is proved.

The following lemma is an almost orthogonal estimate.

\begin{lem}\label{almost}
Suppose that $\phi_{j,k}$ is defined as in \eqref{eq 3.6}. Then
\begin{align}\label{eq:almost}
\begin{split}
&|\phi_{j,k}*\phi_{j',k'}(x)|  \\
&\lesssim   2^{-|j-j'|L}2^{-|k-k'|L}\frac{2^{M(j\vee j')}} {(2^{j\vee j'}+\vert
x_1\vert )^{1+M}} \frac{2^{M(k\vee k')}}{2^{j^\star}(2^{k\vee k'}+\vert x_2\vert + 2^{-j^*}\vert x_3\vert
 )^{2+M}}
\end{split}
\end{align}
for any fixed $L,M>0$,
where $x=(x_1,x_2,x_3)\in \Bbb R^3, j^\star=j$
if $k\ge k^\prime$ and
$j^\star=j^\prime$ if $k<k^\prime$.
\end{lem}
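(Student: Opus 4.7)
The plan is to exploit the tensor structure of $\phi_{j,k}$. Writing $\phi_{j,k} = \phi^{(1)}_j \otimes \Phi^{(2)}_{j,k}$ with $\phi^{(1)}_j(x_1) = 2^{-j}\phi^{(1)}(2^{-j}x_1)$ and $\Phi^{(2)}_{j,k}(x_2,x_3) = 2^{-(j+2k)}\phi^{(2)}(2^{-k}x_2, 2^{-(j+k)}x_3)$, and observing that the one-dimensional factor depends only on $y_1$ while the two-dimensional factor depends only on $(y_2,y_3)$, the three-fold convolution factors as
$$
(\phi_{j,k} * \phi_{j',k'})(x_1, x_2, x_3) = (\phi^{(1)}_j * \phi^{(1)}_{j'})(x_1)\cdot (\Phi^{(2)}_{j,k} * \Phi^{(2)}_{j',k'})(x_2, x_3).
$$
It therefore suffices to establish two separate estimates and then multiply them together.

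For the 1D factor, I would apply the classical one-parameter almost-orthogonality: since $\phi^{(1)}$ has moments vanishing by \eqref{eq 3.4} and is Schwartz, the standard Taylor argument (expanding the function of smaller scale around $x_1$ and exploiting its cancellation) yields
$$
|(\phi^{(1)}_j * \phi^{(1)}_{j'})(x_1)| \lesssim 2^{-|j-j'|L_1}\,\frac{2^{M(j\vee j')}}{(2^{j\vee j'}+|x_1|)^{1+M}}
$$
for any $L_1, M$ up to the order of moment vanishing.

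For the 2D factor, assume without loss of generality that $k \geq k'$; the opposite case is symmetric and produces the assignment $j^\star = j'$. I would Taylor-expand $\Phi^{(2)}_{j,k}(x_2-y_2, x_3-y_3)$ around $(x_2, x_3)$ up to total order $N \leq 11$ in the pair $(y_2, y_3)$. The 2D moment condition $\int y_2^{\alpha_2} y_3^{\alpha_3} \phi^{(2)}\,dy_2\,dy_3 = 0$ for all $\alpha_2, \alpha_3 \leq 10$ rescales to $\Phi^{(2)}_{j',k'}$, so every monomial $y_2^{\alpha_2}y_3^{\alpha_3}$ in the polynomial part integrates to zero against $\Phi^{(2)}_{j',k'}(y_2, y_3)$. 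The remainder of degree $N$ is then controlled by the Schwartz estimates
$$
|\partial_{x_2}^{\alpha_2}\partial_{x_3}^{\alpha_3} \Phi^{(2)}_{j,k}(\xi_2, \xi_3)| \lesssim 2^{-(j+2k)-k\alpha_2-(j+k)\alpha_3}\bigl(1 + 2^{-k}|\xi_2| + 2^{-(j+k)}|\xi_3|\bigr)^{-(2+M)}
$$
combined with the support bounds $|y_2|\lesssim 2^{k'}$ and $|y_3|\lesssim 2^{j'+k'}$ inherited from $\phi^{(2)}$ being supported in the unit ball. The algebraic identity $1+2^{-k}|\xi|+2^{-(j+k)}|\eta| = 2^{-k}\bigl(2^k+|\xi|+2^{-j}|\eta|\bigr)$ then converts the Schwartz decay into the form $2^{Mk}/\bigl[2^j(2^k+|x_2|+2^{-j}|x_3|)^{2+M}\bigr]$ appearing in the lemma.

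The hardest step is the cross regime $k \geq k'$ but $j < j'$, in which $\Phi^{(2)}_{j,k}$ has the larger $x_2$-scale yet the smaller $x_3$-scale. The Taylor remainder alone yields a net factor $2^{-(k-k')\alpha_2}\cdot 2^{((j'-j)-(k-k'))\alpha_3}$ for each $\alpha$ with $\alpha_2+\alpha_3=N$, so the desired $2^{-(k-k')L_2}$ smallness in the 2D part is accompanied by an apparent loss of order $2^{(j'-j)L_2}$ coming from the $\alpha_3=N$ extreme. I would absorb this loss into the 1D gain by choosing $L_1$ strictly larger than $L_2+L$; since the moments of $\phi^{(1)}$ vanish up to order $10$, one may take, say, $L_1 = 2L$ and $N = L$ so that the product of the two estimates reproduces the prescribed $2^{-|j-j'|L}2^{-|k-k'|L}$ for all $L$ not exceeding $5$, which is ample for the application to Proposition \ref{Prop 3.1}. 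A final routine check that the Schwartz decay at the intermediate Taylor point is equivalent, up to constants, to the decay at $(x_2,x_3)$ (using $|y_2|\leq 2^{k'}\leq 2^k$ and $|y_3|\leq 2^{j'+k'}$) completes the argument.
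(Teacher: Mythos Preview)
Your proposal is correct and follows essentially the same route as the paper: factor the convolution as a tensor product, invoke the classical one-parameter almost-orthogonality for the $x_1$ factor, Taylor-expand the coarser-in-$k$ function in the $(x_2,x_3)$ factor against the moment vanishing of the finer one, and absorb the resulting $2^{|j-j'|}$ loss from the 2D estimate into the 1D gain by taking $L_1$ large. One small caveat: your ``final routine check'' is not quite ``up to constants'' in the cross regime $k\ge k'$, $j<j'$---comparing the decay at the Taylor point to the decay at $(x_2,x_3)$ costs an additional factor $2^{|j-j'|(M+2)}$ (as the paper makes explicit), so the correct bookkeeping is $L_1 \ge 2L + M + 2$ rather than $L_1=2L$; this still works since $L_1$ can be taken up to the moment order and only small $L,M$ are needed downstream.
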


\begin{proof}
Let $\phi^{(2)}_{j,k}(x_2,x_3)=2^{-j}2^{-2k}\phi^{(2)}(2^{-k}x_2, 2^{-k-j}x_3)$.
Note that $\phi_{j}^{(1)}(x_1)=2^{-j}\phi^{(1)}(2^{-j}x_1).$ Then $\phi_{j,k}(x_1,x_2,x_3)=\phi_{j}^{(1)}(x_1)\phi^{(2)}_{j,k}(x_2,x_3)$,
and hence \eqref{eq:almost} follows if we prove the following estimates:
\begin{equation}\label{eq:almost1}
|\phi_{j}^{(1)}*\phi_{j'}^{(1)}(x_1)|  \lesssim   2^{-|j-j'|L_1} \frac{2^{M(j\vee j')}} {(2^{j\vee j'}+\vert
x_1\vert )^{1+M}}
\end{equation}
and
\begin{align}\label{eq:almost2}
\begin{split}
|\phi_{j,k}^{(2)}*\phi_{j',k'}^{(2)}(x_2,x_3)|\lesssim   2^{(|j-j'|(L_2+M+2)} 2^{-|k-k'|L_2} \frac{2^{M(k\vee k')}}{2^{j^\star}(2^{k\vee k'}+\vert x_2\vert + 2^{-j^*}\vert x_3\vert
 )^{2+M}}
\end{split}
\end{align}
for any fixed $L_1,L_2,M>0.$

Inequality \eqref{eq:almost1} is the classical almost orthogonality estimate and thus it suffices to show \eqref{eq:almost2}.

By symmetry, we can only consider the case when $k>k^\prime$. Applying the cancellation conditions on $\phi_{j^\prime, k^\prime}^{(2)}$ and the smoothness conditions on $\phi_{j,k}^{(2)},$ we write
\begin{align}\label{almost: 1}
\begin{split}
&
\left|\int_{\mathbb R^2}\phi_{j,k}^{(2)}(x_2-u_2, x_3-u_3)\phi_{j^\prime, k^\prime}^{(2)}(u_2, u_3)du_2 du_3\right|  \\
&\quad=  \left|\int_{\mathbb R^2}[\phi_{j,k}^{(2)}(x_2-u_2, x_3-u_3)-\mathcal P_{L-1}[\phi_{j,k}^{(2)}](x_2, x_3)]\phi_{j^\prime, k^\prime}^{(2)}(u_2, u_3)du_2 du_3\right|\\
&\quad\lesssim  \int_{\mathbb R^2} \Big(\frac{|u_2|}{2^k}+\frac{|u_3|}{2^{j+k}}\Big)^L \frac{2^{-2k-j}}{(1+2^{-k}|\xi_2|+2^{-j-k}|\xi_3|)^{M_1}}\\
&\qquad\times \frac{2^{-2k'-j'}}{(1+2^{-k'}|u_2|+2^{-j'-k'}|u_3|)^{M_2}}du_2 du_3
\end{split}
\end{align}
for some $(\xi_2,\xi_3)$ on the segment joining $(x_2-u_2, x_3-u_3)$ to $(x_2, x_3)$,
where $\mathcal P_{L-1}[\phi_{j,k}^{(2)}](x_2, x_3)$ denotes the Taylor polynomial of order $L-1$ of $\phi_{j,k}^{(2)}$ at $(x_2, x_3)$.

By the triangle inequality,
\begin{gather}
|x_2|\le |\xi_2|+|x_2-\xi_2|\le |\xi_2|+|u_2|\label{eq: 3.81},\\
|x_3|\le  |\xi_3|+|x_3-\xi_3|\le |\xi_3|+|u_3|\label{eq: 3.91}.
\end{gather}
From \eqref{eq: 3.81} and the fact that $k>k'$,
\begin{equation}\label{eq: 3.101}
2^{-k}|x_2|\le 2^{-k}|\xi_2|+2^{-k}|u_2|\le 2^{-k}|\xi_2|+2^{-k'}|u_2|.
\end{equation}
Using \eqref{eq: 3.91} and $k>k'$ again, we get
\begin{align}\label{eq: 3.111}
\begin{split}
2^{-j-k}|x_3|\le & 2^{-j-k}|\xi_3|+2^{-j-k}|u_3|\le 2^{-j-k}|\xi_3|+2^{|j-j'|}2^{-j'-k'}|u_3|\\
\le & 2^{|j-j'|}(2^{-j-k}|\xi_3|+2^{-j'-k'}|u_3|).
\end{split}
\end{align}
Putting \eqref{eq: 3.101} and \eqref{eq: 3.111} together gives
\begin{align*}
1+2^{-k}|x_2|+2^{-j-k}|x_3|
\le & 2^{|j-j'|}(1+2^{-k}|\xi_2|+2^{-k'}|u_2| +2^{-j-k}|\xi_3|+2^{-j'-k'}|u_3|)\\
\le & 2^{|j-j'|}(1+2^{-k}|\xi_2|+2^{-j-k}|\xi_3|)(1+2^{-k'}|u_2|+2^{-j'-k'}|u_3| ).
\end{align*}
This is equivalent to
$$
\frac{1}{1+2^{-k}|\xi_2|+2^{-j-k}|\xi_3|}\le 2^{|j-j'|}\frac{1+2^{-k'}|u_2|+2^{-j'-k'}|u_3| }{1+2^{-k}|x_2|+2^{-j-k}|x_3|}.
$$
We also have
$$
\Big(\frac{|u_2|}{2^k}+\frac{|u_3|}{2^{j+k}}\Big)^L\le
2^{[|j-j'|+(k'-k)]L}
\Big(\frac{|u_2|}{2^{k'}}+\frac{|u_3|}{2^{j'+k'}}\Big)^L.
$$

We insert these estimates to the last integral in \eqref{almost: 1} and use the fact that $M_2>M_1+L+2$ to get
\begin{align*}
&
\left|\int_{\mathbb R^2}\phi_{j,k}^{(2)}(x_2-u_2, x_3-u_3)\phi_{j^\prime, k^\prime}^{(2)}(u_2, u_3)du_2 du_3\right|  \\
&\quad\lesssim 2^{(k'-k)L} 2^{|j-j'|(L+M_1)}
\frac{2^{-2k-j} }{(1+2^{-k}|x_2|+2^{-j-k}|x_3|)^{M_1}}\\
&\qquad \times \int_{\mathbb R^2}
 \frac{2^{-2k'-j'}}{(1+2^{-k'}|u_2|+2^{-j'-k'}|u_3|)^{M_2-M_1-L}}du_2 du_3\\
&\quad\lesssim 2^{(k'-k)L} 2^{|j-j'|(L+M_1)}
\frac{2^{-2k-j} }{(1+2^{-k}|x_2|+2^{-j-k}|x_3|)^{M_1}},
\end{align*}
which gives \eqref{eq:almost2} with $L=L_2, M_1=M+2$.
This concludes the proof of Lemma \ref{almost}.
\end{proof}

We now turn to the proof of Theorem \ref{Thm 1.4}. To prove part
(a), we first show the $L^2$ boundedness of $\mathcal K\ast f.$ This
is similar to the proof of Theorem \ref{Thm 1.1}. We only outline
the proof as follows.

By the Plancherel theorem, the $L^2$ boundedness of $\mathcal K\ast
f$ is equivalent to $|\widehat{\mathcal K}(\chi,\eta,\xi)|$ $\leq
A,$ where $\widehat{\mathcal K}$ is the Fourier transform of
$\mathcal K$ in the sense of distributions and $A$ is the constant
depending only on the constant $C$.

Let $\zeta_1(x_1)$ be a smooth function on $\Bbb R$ with
 $\zeta_1(x_1)=1$ if $|x_1| \leq 8$ and $\zeta_1(x_1)=0$ if $|x_1| \geq 16$, and let $\zeta_2=1-\zeta_1$.
 For simplicity, we denote by $\Kb = \frac{1}{\chi\eta\xi}\K(\frac{x_1}{\chi},\frac{x_2}{\eta},\frac{x_3}{\xi})$.
We write
\begin{align*}
\widehat{\K}(\chi,\eta,\xi)
 = &\iiint\Kb \zeta_2(x_1)e^{-ix_1}e^{-ix_2}e^{-ix_3}dx_1dx_2dx_3 \\
 &+\iiint \Kb \zeta_1(x_1)  e^{-ix_1}e^{-ix_2} e^{-ix_3}dx_1dx_2dx_3\\
:=&I+II.
\end{align*}
To estimate $I,$ we write
\begin{align*}
|I|
&= \12 \bigg|\iiint \Delta_{x_1,\pi} \Big(\Kb
\zeta_2(x_1)\Big)e^{-ix_1}e^{-ix_2}e^{-ix_3}dx_1dx_2dx_3\bigg|\\
&\lesssim \bigg|\iiint \Delta_{x_1,\pi} \Big( \Kb \zeta_2(x_1)
\Big)e^{-ix_1}\zeta_2(x_2)e^{-ix_2}e^{-ix_3 }dx_1dx_2dx_3\bigg| \\
&\quad + \bigg|\iiint \Delta_{x_1,\pi} \Big( \Kb \zeta_2(x_1)
\Big) e^{-ix_1}\zeta_1(x_2)e^{-ix_2}e^{-ix_3 }dx_1dx_2dx_3\bigg|\\
&:= I_1+I_2.
\end{align*}
Note that
\begin{align*}
I_{1}
&\lesssim \bigg|\iiint \Delta_{x_2,\pi} \bigg(\Delta_{x_1,\pi} \Big( \Kb \zeta_2(x_1)
\Big)\zeta_2(x_2)\bigg)e^{-ix_1}e^{-ix_2}e^{-ix_3 }dx_1dx_2dx_3\bigg|\\
&\lesssim   \int_{\Bbb R}\int_{|x_2| \geq 8}\int_{|x_1|\geq
8} \frac{1}{|x_1|^{1+\ta} |x_2|^{1+\ta} |x_3|}\xyzall dx_1 dx_2 dx_3\\
& \lesssim 1.
\end{align*}
For term $I_{2},$ note that
\begin{align*}
|I_{2}|
&\le \big|\iiint \Delta_{x_1,\pi} \big( \Kb
\zeta_2(x_1) \big) e^{-ix_1}\zeta_1(x_2)e^{-ix_2}\zeta_2(x_3)e^{-ix_3
}dx_1dx_2dx_3\big| \\
&\quad + \big|\iiint \Delta_{x_1,\pi} \big( \Kb \zeta_2(x_1) \big)
e^{-ix_1}\zeta_1(x_2)e^{-ix_2}\zeta_1(x_3)e^{-ix_3 }dx_1dx_2dx_3\big|\\
&:= I_{2,1}+ I_{2,2}.
\end{align*}
Thus,
\begin{align*}
I_{2,1}
&= \12 \bigg| \iiint \Delta_{x_3,\pi} \Big(\Delta_{x_1,\pi} \big(\Kb
\zeta_2(x_1)\big)\zeta_2(x_3)\Big)
e^{-ix_1}\zeta_1(x_2)e^{-ix_2}e^{-ix_3 }dx_1dx_2dx_3 \bigg|\\
&\lesssim \int_{|x_3| \geq 8}\int_{|x_2| < 16}\int_{|x_1|\geq 8}
\frac{1}{|x_1|^{1+\ta} |x_2| |x_3|^{1+\ta}}\xyzall dx_1dx_2dx_3\\
&\lesssim 1.
\end{align*}
To estimate $I_{2,2},$ we write
\begin{align*}
I_{2,2}
&=\iiint \Delta_{x_1,\pi} \big( \Kb \zeta_2(x_1) \big)
e^{-ix_1}\zeta_1(x_2)\zeta_1(x_3)\big(e^{-ix_2}e^{-ix_3}-1\big)dx_1dx_2dx_3\\
&\quad + \iiint \Delta_{x_1,\pi} \big( \Kb \zeta_2(x_1) \big)
e^{-ix_1}\zeta_1(x_2)\zeta_1(x_3)dx_1dx_2dx_3.
\end{align*}
Inserting $|e^{-ix_2}e^{-ix_3 }-1|\leq |x_2|+|x_3 |$ into the first integral
together with the condition (R) and using the cancellation condition
(C3.c) for the second integral, we get
\begin{align*}
I_{2,2}
&\lesssim \int_{|x_3 | < 16}\int_{|x_2| < 16}\int_{|x_1|\geq 8}
\frac{1}{|x_1|^{1+\ta} |x_2| |x_3|}\xyzall (|x_2|+|x_3|)dx_2dx_3 \\
&\quad + \int_{|x_1|\geq 8}|x_1|^{-1-\ta}dx_1,
\end{align*}
which is dominated by a constant. Altogether, we obtain
the required bound for term $I.$

Now we estimate term $II.$ We first write
\begin{align*}
II
&= \iiint \Kb \zeta_1(x_1)(e^{-ix_1}-1)e^{-ix_2}e^{-ix_3 }dx_1dx_2dx_3\\
&\quad +\iiint \Kb \zeta_1(x_1)e^{-ix_2} e^{-ix_3 }dx_1dx_2dx_3\\
&:= II_1 +II_2.
\end{align*}
We further write
\begin{align*}
II_1=& \iiint \Kb
\zeta_1(x_1)(e^{-ix_1}-1)\zeta_2(x_2)e^{-ix_2} e^{-ix_3 }dx_1dx_2dx_3 \\
&+\iiint \Kb
 \zeta_1(x_1)(e^{-ix_1}-1)\zeta_1(x_2) e^{-ix_2} e^{-ix_3 }dx_1dx_2dx_3\\
:=& II_{1,1}+ II_{1,2}.
\end{align*}

For term $II_{1,1},$ we have
\begin{align*}
|II_{1,1}|= \12 \bigg| \iiint \Delta_{x_2,\pi} \big(\Kb
\zeta_2(x_2)\big)\zeta_1(x_1)(e^{-ix_1}-1)e^{-ix_2} e^{-ix_3 }dx_1dx_2dx_3 \bigg|.
\end{align*}
Then the required bound follows from the fact that $|e^{-ix_1}-1|\leq
|x_1|$ and the condition (R).

Similarly, we write
\begin{align*}
II_{1,2}
&=\iiint \Kb \zeta_1(x_1)(e^{-ix_1}-1)\zeta_1(x_2)e^{-ix_2}
\zeta_2(x_3)e^{-ix_3 }dx_1dx_2dx_3\\
&\quad + \iiint\Kb \zeta_1(x_1)(e^{-ix_1}-1)\zeta_1(x_2)e^{-ix_2}
\zeta_1(x_3)e^{-ix_3 }dx_1dx_2dx_3 \\
&:=II_{1,2,1}+ II_{1,2,2}.
\end{align*}
Since
\begin{equation*}
|II_{1,2,1}| = \12 \bigg|\iiint \Delta_{x_3,\pi} \big(\Kb \zeta_2(x_3)\big)
\zeta_1(x_1)(e^{-ix_1}-1)\zeta_1(x_2)e^{-ix_2}e^{-ix_3 }dx_1dx_2dx_3 \bigg|.
\end{equation*}
The required bound for $II_{1,2,1}$ is concluded by the fact
that $|e^{-ix_1}-1|\leq |x_1|$ and the condition (R). To estimate term
$II_{1,2,2},$ we write
\begin{align*}
II_{1,2,2}
&=\iiint\Kb
\zeta_1(x_1)(e^{-ix_1}-1)\zeta_1(x_2)\zeta_1(x_3)\big(e^{-ix_2}
e^{-ix_3}-1\big)dx_1dx_2dx_3\\
&\quad +\iiint\Kb \zeta_1(x_1)(e^{-ix_1}-1)\zeta_1(x_2)\zeta_1(x_3)dx_1dx_2dx_3.
\end{align*}
Using the facts that $|e^{-ix_1}-1|\leq |x_1|$ and $|e^{-ix_2}e^{-ix_3}-1|\leq |x_2|+|x_3|$,
the condition (R) for the first integral, and the condition (C3.c) for the second integral,
we obtain the desired bound for $II_{1,2,2}.$

Finally, we estimate term $II_2.$ Denote $II_2=II_{2,1}+II_{2,2},$
where $II_{2,1}$ and $II_{2,2}$ are given by $\iiint \Kb
\zeta_1(x_1)\zeta_2(x_2)e^{-ix_2}$ $e^{-ix_3}dx_1dx_2dx_3$ and $\iiint \Kb
\zeta_1(x_1)\zeta_1(x_2)e^{-ix_2}$ $e^{-ix_3}dx_1dx_2dx_3,$ respectively. Then
\begin{eqnarray*}
|II_{2,1}|= \12 \bigg|\iiint \Delta_{x_2,\pi} \big( \Kb
\zeta_2(x_2)\big)\zeta_1(x_1)e^{-ix_2}e^{-ix_3 }dx_1dx_2dx_3 \bigg| \lesssim 1.
\end{eqnarray*}
For $II_{2,2}$, we insert
\begin{align*}
\zeta_1(x_1)\zeta_1(x_2)e^{-ix_2} e^{-ix_3}
&= \zeta_1(x_1)\zeta_1(x_2)e^{-ix_2}
\zeta_2(x_3)e^{-ix_3}\\
&\quad +\zeta_1(x_1)\zeta_1(x_2) \zeta_1(x_3)\big(e^{-ix_2}e^{-ix_3}-1\big)
+ \zeta_1(x_1)\zeta_1(x_2) \zeta_1(x_3)
\end{align*}
into
\begin{eqnarray*}
\iiint \Kb \zeta_1(x_1)\zeta_1(x_2)e^{-ix_2} e^{-ix_3}dx_1dx_2dx_3
\end{eqnarray*}
and apply condition (C1.b), (C1.b), and (C1.a).
Thus these estimates yield the bound of $II_{2,2}$ and hence the
required bound for term $II.$ The $L^2$ boundedness of $\mathcal
K\ast f$ follows.

Next, to show the $L^p$ boundedness of the operator $\mathcal K\ast
f$, similar to the proof of Theorem \ref{Thm 1.3}, it suffices to prove the
following lemma.

\begin{lem}\label{Lem 3.3}
Suppose that $\phi^{(1)}$ and $\phi^{(2)}$ satisfy the conditions
\eqref{eq 3.1} -- \eqref{eq 3.4} and $\mathcal{K}$ is a distribution
defined on $\Bbb R^3$ satisfying conditions {\rm(R)} and {\rm(C3.a)} --
{\rm(C3.c)}. Then, for $\lambda =\12 \min(\ta,\tb),$
\begin{eqnarray*}
|\mathcal K\ast (\phi^{(1)}\otimes \phi^{(2)})(x_1,x_2,x_3)|\leq
\frac{C_\lambda}{(1+|x_1|)^{1+\lambda} (1+|x_2|)^{1+\lambda}
(1+|x_3|)^{1+\lambda}},
\end{eqnarray*}
where $C_\lambda$ is the constant depending only on $\lambda.$
\end{lem}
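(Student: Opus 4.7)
The plan is to follow the eight-case decomposition used in the proof of Lemma \ref{Lem 3.2}, splitting according to whether each $|x_i|$ is at least $3$ or less than $3$ for $i=1,2,3$. The structural framework carries over because the regularity condition (R) is identical; only the cancellation conditions have changed form. The key observation is that (C3.a)--(C3.c) are designed precisely to accommodate integration against normalized bump functions, which matches the present situation since $\phi^{(1)}$ and $\phi^{(2)}$ are Schwartz functions supported in the unit ball.

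In cases where some coordinate satisfies $|x_i|\ge 3$, I would exploit the moment conditions \eqref{eq 3.4} on $\phi^{(1)}$ and $\phi^{(2)}$ to subtract off a Taylor polynomial, producing difference operators $\Delta_{x_i,-u_i}$ acting on $\mathcal{K}$. These differences are then controlled by (R), which supplies the Zygmund-invariant factor $\bigl(|x_1x_2/x_3|+|x_3/x_1x_2|\bigr)^{-\theta_2}$; together with the polynomial decay in each $|x_i|$, this yields the required $(1+|x_i|)^{-1-\lambda}$ factor for $\lambda=\frac12\min(\theta_1,\theta_2)$, exactly as in the corresponding cases of Lemma \ref{Lem 3.2}.

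In cases where at least one $|x_i|<3$, cancellation of $\mathcal{K}$ itself is required, and here (C3.a)--(C3.c) enter. The typical maneuver is: when $|x_1|<3$, write $\phi^{(1)}(x_1-u_1)=\bigl[\phi^{(1)}(x_1-u_1)-\phi^{(1)}(x_1)\bigr]+\phi^{(1)}(x_1)$, bound the difference via smoothness of $\phi^{(1)}$ together with (R), and treat the remaining term by (C3.b), since $\phi^{(1)}$ is a normalized bump in $u_1$. Analogously, when $|x_2|<3$ or $|x_3|<3$ one decomposes $\phi^{(2)}(x_2-u_2,x_3-u_3)$ against $\phi^{(2)}(x_2,x_3)$ and applies (C3.c) to the $(u_2,u_3)$ integral. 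Case 8, where $|x_1|,|x_2|,|x_3|<3$, requires the four-fold splitting of the product $\phi^{(1)}(x_1-u_1)\phi^{(2)}(x_2-u_2,x_3-u_3)$ exactly as in Lemma \ref{Lem 3.2}; the four resulting main terms are then handled by (R), (C3.b), (C3.c), and (C3.a) respectively.

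The main obstacle will be matching the bump functions and dilation scales correctly when invoking (C3.a)--(C3.c). For instance, when applying (C3.b) in a case such as $|x_2|\ge 3$, the integral over $u_1$ involves $\phi^{(1)}$ as the one-dimensional bump, and one must verify (up to a harmless constant multiple) that $\phi^{(1)}$ qualifies as a normalized bump and that the dilation parameter $R$ in (C3.b) can be chosen compatibly with the regime occupied by $x_2$ and $x_3-u_3$; the Zygmund-invariance of all the conditions is what makes this matching possible. Once this bookkeeping is performed case by case, each resulting integral reduces to one of the same form as in the proof of Lemma \ref{Lem 3.2}, and the pointwise bound $(1+|x_1|)^{-1-\lambda}(1+|x_2|)^{-1-\lambda}(1+|x_3|)^{-1-\lambda}$ follows.
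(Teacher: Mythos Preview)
Your proposal is essentially correct and follows the same eight-case scheme as the paper's proof. There is, however, one technical wrinkle that you gloss over and that the paper handles explicitly: after splitting $\phi^{(1)}(x_1-u_1)=[\phi^{(1)}(x_1-u_1)-\phi^{(1)}(x_1)]+\phi^{(1)}(x_1)$ in a small-$|x_1|$ case, the ``remaining term'' carries only the \emph{constant} $\phi^{(1)}(x_1)$ --- there is no longer any bump function in $u_1$ against which to invoke (C3.b). Your justification ``since $\phi^{(1)}$ is a normalized bump in $u_1$'' does not apply to this term. The paper's remedy is to introduce an auxiliary smooth cutoff $\widetilde{\phi}$ with $\widetilde{\phi}\equiv 1$ on $[-1/2,1/2]$ and $\widetilde{\phi}\equiv 0$ outside $[-1,1]$, and to insert the harmless factor $\widetilde{\phi}(u_1/10)$ (and likewise $\widetilde{\phi}(u_2/10)\widetilde{\phi}(u_3/10)$ in the relevant cases). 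Since $\phi^{(1)}(x_1-u_1)$ forces $|u_1|\le 4$ when $|x_1|<3$, this insertion does not alter the integral, but it supplies the normalized bump required by (C3.a)--(C3.c). With that device in place, your outline matches the paper's argument case by case.
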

\begin{proof} The proof the Lemma \ref{Lem 3.3} is similar
to the proof of Lemma \ref{Lem 3.2}. For simplicity, let $S=\mathcal
K\ast (\phi^{(1)}\otimes \phi^{(2)})(x_1,x_2,x_3)$. We consider the
following eight cases.

Case 1. $|x_1|\geq 3, |x_2| \geq 3, |x_3| \geq 3$. For this case, we use
(\ref{eq 3.4}) to write
\begin{align*}
S
&=\iiint \big[\mathcal K(x_1-u_1,x_2-u_2,x_3-u_3)-\mathcal
K(x_1,x_2-u_2,x_3-u_3)\\
&\qquad\quad -\mathcal K(x_1-u_1,x_2,x_3)+\mathcal K(x_1,x_2,x_3)\big]
\phi^{(1)}(u_1)\phi^{(2)}(u_2,u_3)du_3du_2du_1.
\end{align*}
Note that $ \mathcal K(x_1-u_1,x_2-u_2,x_3-u_3)-\mathcal K(x_1,x_2-u_2,x_3-u_3)-
\big(\mathcal K(x_1-u_1,x_2,x_3)-\mathcal K(x_1,x_2,x_3)\big) = \Delta_{x_2,-u_2}\Delta_{x_1,-u_1}
\K(x_1,x_2,x_3-u_3)+\Delta_{x_3,-u_3} \Delta_{x_1,-u_1} \K (x_1,x_2,x_3).$ Thus, by the condition (R) with
$\alpha=\beta=1, \gamma=0$ and $\alpha=\gamma=1,\beta=0$, respectively,
\begin{align*}
|S|
&\lesssim \int_{|u_1| \leq 1}\int_{|u_2| \leq 1}\int_{|u_3| \leq 1}
\frac{|u_1|^\ta |u_2|^\ta}{|x_1|^{1+\ta} |x_2|^{1+\ta}
|x_3|}\Big(\Big|\frac{x_1x_2}{x_3}\Big|+\Big|
\frac{x_3}{x_1x_2}\Big|\Big)^{-\lambda}du_3du_2du_1\\
&\quad +\int_{|u_1| \leq 1}\int_{|u_2| \leq 1}\int_{|u_3| \leq 1} \frac{|u_1|^\ta
|u_3|^\ta}{|x_1|^{1+\ta } |x_2| |x_3|^{1+\ta
}}\Big(\Big|\frac{x_1x_2}{x_3}\Big|+\Big|
\frac{x_3}{x_1x_2}\Big|\Big)^{-\lambda}du_3du_2du_1\\
&\lesssim \frac{1}{(1+|x_1|)^{1+\lambda} (1+|x_2|)^{1+\lambda}
(1+|x_3|)^{1+\lambda }}.
\end{align*}

Case 2. $|x_1|\geq 3, |x_2| \geq 3, |x_3| < 3$. By the cancellation
condition of $\phi^{(1)},$
\begin{align*}
S
&=\iiint \big[\mathcal K(x_1-u_1,x_2-u_2,x_3-u_3)-\mathcal K(x_1,x_2-u_2,x_3-u_3) \big]\\
&\quad\qquad\times \phi^{(1)}(u_1)\phi^{(2)}(u_2,u_3)du_3du_2du_1.
\end{align*}
Therefore, using the condition (R) with $\alpha=1$ and $\beta=\gamma=0,$ we obtain
\begin{align*}
|S|
&\lesssim \int_{|u_1| \leq 1}\int_{|u_2| \leq 1}\int_{|u_3| \leq 1}
\frac{|u_1|^\ta}{|x_1|^{1+\ta} |x_2| |x_3-u_3|}\Big|\frac{x_1x_2}{x_3-u_3}\Big|^{-\lambda}du_3du_2du_1\\
&\lesssim \frac{1}{(1+|x_1|)^{1+\lambda} (1+|x_2|)^{1+\lambda}
(1+|x_3|)^{1+\lambda }}.
\end{align*}

Case 3. $|x_1|\geq 3, |x_2| < 3, |x_3|  \geq 3.$ The same expression for
$S$ as in Case 2 yields
\begin{align*}
|S|
&\lesssim \int_{|u_1| \leq 1}\int_{|u_2| \leq 1}\int_{|u_3| \leq 1}
\frac{|u_1|^\ta }{|x_1|^{1+\ta} |x_2-u_2|
|x_3|}\Big|\frac{x_3}{x_1(x_2-u_2)}\Big|^{-\lambda}du_3du_2du_1\\
&\lesssim \frac{1}{(1+|x_1|)^{1+\lambda} (1+|x_2|)^{1+\lambda}
(1+|x_3|)^{1+\lambda}}.
\end{align*}

Before handling the other cases, we introduce a  bump function
$\widetilde{\phi}$ on $\Bbb R$, with $\widetilde{\phi}(x_1)=1$ if $|x_1|\leq 1/2$ and
$\widetilde{\phi}(x_1)=0$ if
$|x_1|\geq 1$.\\

Case 4. $|x_1|\geq 3, |x_2| < 3, |x_3| < 3.$ Using the cancellation
condition of $\phi^{(1)}$, we write
\begin{align*}
\mathcal K&\ast (\phi^{(1)}\otimes \phi^{(2)})(x_1,x_2,x_3)\\
=& \iiint\big(\mathcal K(x_1-u_1,u_2,u_3)-\mathcal K(x,u_2,u_3)\big) \phi^{(1)}(u_1)\\
&\qquad\times \big(\phi^{(2)}(x_2-u_2,x_3-u_3)-\phi^{(2)}(x_2,x_3)\big)
  \widetilde{\phi}\Big(\frac{u_2}{10}\Big)\widetilde{\phi}\Big(\frac{u_3}{10}\Big)du_3du_2du_1\\
& +\iiint\big(\mathcal K(x_1-u_1,u_2,u_3)-\mathcal K(x_1,u_2,u_3)\big)\phi^{(1)}(u_1)\\
&\hskip 1.3cm \times \phi^{(2)}(x_2,x_3)
\widetilde{\phi}\Big(\frac{u_2}{10}\Big)\widetilde{\phi}\Big(\frac{u_3}{10}\Big)du_3du_2du_1.
\end{align*}
Hence, by the condition (R) with $\alpha=1,\beta=\gamma=0$ for the first
integral and the cancellation condition (C3.c) with $\alpha=1$ for
the second integral,
\begin{align*}
|S|
&\lesssim \int_{|u_1| \leq 1}\int_{|u_2| \leq 10}\int_{|u_3| \leq 10}
\frac{|u_1|^\ta}{|x_1|^{1+\ta} |u_2|
|u_3|}\Big(\Big|\frac{x_1u_2}{u_3}\Big|+\Big|\frac{u_3}{x_1u_2}\Big|
\Big)^{-\lambda} (|u_2|+|u_3|)du_3du_2du_1\\
&\quad + \int_{|u_1| \leq 1} \frac{|u_1|^\ta}{|x_1|^{1+\ta}} du_1\\
&\lesssim \frac{1}{(1+|x_1|)^{1+\lambda}(1+|x_2|)^{1+\lambda} (1+|x_3|)^{1+\lambda }}.
\end{align*}

Case 5. $|x_1|< 3, |x_2| \geq 3, |x_3| \geq 3$. Similar to Case 4, using
the cancellation condition of $\phi^{(2)},$ we write
\begin{align*}
S
&= \iiint
\big[\mathcal K(u_1,x_2-u_2,x_3-u_3)-\mathcal K(u_1,x_2,x_3)\big] \\
&\qquad\quad\times \big(\phi^{(1)}(x_1-u_1)-\phi^{(1)}(x_1)\big)\phi^{(2)}(u_2,u_3) \widetilde{\phi}\Big(\frac{u_1}{10}\Big)du_3du_2du_1 \\
&\quad+\iiint \big[\mathcal K(u_1,x_2-u_2,x_3-u_3)-\mathcal K(u_1,x_2,x_3)\big]
\phi^{(1)}(x_1)\phi^{(2)}(u_2,u_3) \widetilde{\phi}\Big(\frac{u_1}{10}\Big)du_3du_2du_1.
\end{align*}
Note that $\mathcal K(u_1,x_2-u_2,x_3-u_3)-\mathcal K(u_1,x_2,x_3)=\Delta_{x_2,-u_2} \K
(u_1,x_2,x_3-u_3)\break + \Delta_{x_3,-u_3} \K (u_1,x_2,x_3).$
Using condition (R) on
$\mathcal K$, the smoothness of $\phi^{(1)}$ for the first integral,
the cancellation conditions (C3.b) with $\beta=1, \gamma=0$ and
$\beta=0, \gamma=1$ for the second integral, and
applying the dominated convergence theorem, we obtain
\begin{align*}
|S|
&\lesssim \int_{|u_1|\le 10}\int_{|u_2|\le 1}\int_{|u_3|\le 1}
\bigg(\frac{|u_2|^\ta}{|u_1| |x_2|^{1+\ta } |x_3|}+\frac{|u_3|^\ta}{|u_1| |x_2|
|x_3|^{1+\ta}}\Big)\\
&\hskip 4.3cm\times \Big(\Big|\frac{u_1x_2}{x_3}\Big|+\Big|\frac{x_3}{u_1x_2}\Big|\Big)^{-\lambda}|u_1|du_3du_2du_1\\
&\quad+ \int_{|u_2| \leq 1}\int_{|u_3| \leq 1}
\bigg(\frac{|u_2|^\ta}{|x_2|^{1+\ta} |x_3|}+\frac{|u_3|^\ta}{|x_2|
|x_3|^{1+\ta}}\Big)\Big (\Big|\frac{4x_2}{x_3}\Big|+\Big|\frac{x_3}{4x_2}\Big|
\Big)^{-\lambda}du_3du_2\\
&\lesssim \frac{1}{(1+|x_1|)^{1+\lambda} (1+|x_2|)^{1+\lambda} (1+|x_3|)^{1+\lambda}}.
\end{align*}

Case 6. $|x_1|< 3, |x_2| \geq 3, |x_3|< 3.$ Note that
\begin{align*}
S
&= \iiint \mathcal K(u_1,x_2-u_2,x_3-u_3) \big(\phi^{(1)}(x_1-u_1)-\phi^{(1)}(x_1)\big) \phi^{(2)}(u_2,u_3) \widetilde{\phi}\Big(\frac{u_1}{10}\Big)du_3du_2du_1\\
&\quad+\iiint \mathcal K(u_1,x_2-u_2,x_3-u_3) \phi^{(1)}(x_1)\phi^{(2)}(u_2,u_3)
\widetilde{\phi}\Big(\frac{u_1}{10}\Big)du_3du_2du_1.
\end{align*}
By the condition (R) with $\alpha=\beta=\gamma=0$ and
the smoothness condition of $\phi^{(1)}$ on the first integral,
and the condition (C3.b) with $\beta=\gamma=0$ for the second integral,
\begin{align*}
|S|
&\lesssim \int_{|u_1|\le 10}\int_{|u_2|\le 1}\int_{|u_3|\le 1}
\frac{1}{|u_1| |x_2|
|x_3-u_3|}\Big(\Big|\frac{u_1x_2}{x_3-u_3}\Big|+\Big|\frac{x_3-u_3}{u_1x_2}\Big|
\Big)^{-\lambda}|u_1|du_3du_2du_1\\
&\quad +\int_{|u_2| \leq 1}\int_{|u_3| \leq 1} \frac{1}{|x_2|
|x_3-u_3|}\Big(\Big|\frac{x_2}{x_3-u_3}\Big|+\Big|\frac{x_3-u_3}{x_2}\Big|
\Big)^{-\lambda}du_3du_2\\
&\lesssim \frac{1}{|x_2|^{1+\lambda}}\\
&\lesssim \frac{1}{(1+|x_1|)^{1+\lambda} (1+|x_2|)^{1+\lambda} (1+|x_3|)^{1+\lambda}}.
\end{align*}

Case 7. $|x_1|< 3, |x_2| < 3, |x_3|  \geq 3$. The required estimate
follows directly from the condition (R):
\begin{align*}
|S|
&\lesssim \int_{|u_1| \leq 1}\int_{|u_2| \leq 1}\int_{|u_3| \leq 1}
\frac{1}{|x_1-u_1| |x_2-u_2| |x_3|}\Big|\frac{x_3}{(x_1-u_1)(x_2-u_2)}\Big|^{-\lambda}du_3du_2du_1\\
&\lesssim \frac{1}{|x_3|^{1+\lambda}}\\
&\lesssim \frac{1}{(1+|x_1|)^{1+\lambda} (1+|x_2|)^{1+\lambda} (1+|x_3|)^{1+\lambda}}.
\end{align*}

Case 8. $|x_1|< 3, |x_2| < 3, |x_3| < 3$. Inserting
\begin{align*}
&\phi^{(1)}(x_1-u_1)\phi^{(2)}(x_2-u_2,x_3-u_3)\\
&\qquad= [\phi^{(1)}(x_1-u_1)-\phi^{(1)}(x_1)][\phi^{(2)}(x_2-u_2,x_3-u_3)-\phi^{(2)}(x_2,x_3)]\\
&\quad\qquad+ \phi^{(1)}(x_1)[\phi^{(2)}(x_2-u_2,x_3-u_3)-\phi^{(2)}(x_2,x_3)]\\
&\quad\qquad+ [\phi^{(1)}(x_1-u_1)-\phi^{(1)}(x_1)]\phi^{(2)}(x_2,x_3)
+\phi^{(1)}(x_1)\phi^{(2)}(x_2,x_3),
\end{align*}
we write
\begin{align*}
S
&=\iiint \mathcal K(u_1,u_2,u_3) \times
\Big\{[\phi^{(1)}(x_1-u_1)-\phi^{(1)}(x_1)][\phi^{(2)}(x_2-u_2,x_3-u_3)-\phi^{(2)}(x_2,x_3)]\\
&\quad + \phi^{(1)}(x_1)[\phi^{(2)}(x_2-u_2,x_3-u_3)-\phi^{(2)}(x_2,x_3)]
 + [\phi^{(1)}(x_1-u_1)-\phi^{(1)}(x_1)]\phi^{(2)}(x_2,x_3)\\
&\quad +\phi^{(1)}(x_1)\phi^{(2)}(x_2,x_3)\Big\}\widetilde{\phi}\Big(\frac{u_1}{10}\Big)
 \widetilde{\phi}\Big(\frac{u_2}{10}\Big)\widetilde{\phi}\Big(\frac{u_3}{10}\Big)du_3du_2du_1
\end{align*}
as four integrals. Using the condition (R) with
$\alpha=\beta=\gamma=0$ and the smoothness condition of $\phi^{(1)}$
for the first integral, the cancellation conditions (C3.b), (C3.c)
and (C3.a) for the last three integrals, we obtain
\begin{align*}
|S|
&\lesssim \int_{|u_1| \leq 4}\int_{|u_2| \leq 4}\int_{|u_3| \leq 4}
\frac{1}{|u_1| |u_2|
|u_3|}\Big(\Big|\frac{u_1u_2}{u_3}\Big|+\Big|\frac{u_3}{u_1u_2}\Big|
\Big)^{-\tb} |u_1|(|u_2|+|u_3|)du_3du_2du_1\\
&\quad +\int_{|u_2| \leq 4}\int_{|u_3| \leq 4} \frac{1}{|u_2|
|u_3|}\Big(\Big|\frac{4u_2}{u_3}\Big|+\Big|\frac{u_3}{4u_2}\Big|
\Big)^{-\tb}(|u_2|+|u_3|)du_3du_2\\
&\quad +\int_{|u_1| \leq 4} \frac{1}{|u_1|} |u_1|du_1+1 \\
&\lesssim 1\\
&\lesssim \frac{1}{(1+|x_1|)^{1+\lambda} (1+|x_2|)^{1+\lambda}
(1+|x_3|)^{1+\lambda }}.
\end{align*}
This completes the proof of Lemma \ref{Lem 3.3}.
\end{proof}

 The proof of part (b) of Theorem
\ref{Thm 1.4} follows from part (a). Indeed, the conditions (R) and
(C2.a) -- (C2.c) imply the conditions (C3.a) -- (C3.c). To see this,
inserting
\begin{align*}
\widetilde{\phi}( x_1,x_2,x_3)
&=\big[(\widetilde{\phi}(x_1,x_2,x_3)-\widetilde{\phi}(0,x_2,x_3))-(\widetilde{\phi}(x_1,0,0)-\widetilde{\phi}(0,0,0))\big]\\
&\quad+(\widetilde{\phi}(x_1,0,0)-\widetilde{\phi}(0,0,0))
+(\widetilde{\phi}(0,x_2,x_3)-\widetilde{\phi}(0,x_2,0))\\
&\quad+(\widetilde{\phi}(0,x_2,0)-\widetilde{\phi}(0,0,0))+\widetilde{\phi}(0,0,0)
\end{align*}
into $\iiint\mathcal{K}(x_1,x_2,x_3)\widetilde{\phi}( x_1,  x_2, x_3)dx_1dx_2dx_3,$ we obtain that
\begin{align*}
&\iiint \mathcal{K}(x_1,x_2,x_3)\widetilde{\phi}(R_1 x_1, R_2 x_2, R_1R_2 x_3)dx_1dx_2dx_3\\
&\qquad =\lim_{\epsilon_1,\epsilon_2,\epsilon_3\rightarrow 0}\ir
\mathcal{K}(x_1,x_2,x_3)\widetilde{\phi}(R_1 x_1, R_2 x_2, R_1R_2 x_3)dx_1dx_2dx_3.
\end{align*}
Let $E(\epsilon,R_1,R_2)=\{x\in \Bbb R^3: \epsilon_1 \leq |x_1|\leq
\frac{1}{R_1},\epsilon_2 \leq |x_2|\leq \frac{1}{R_2},\epsilon_3 \leq |x_3|\leq \frac{1}{R_1R_2}\}$.
Then
\begin{align*}
&\Bigg|\ir \mathcal{K}(x_1,x_2,x_3)\widetilde{\phi}(R_1 x_1, R_2 x_2, R_1R_2
x_3)dx_1dx_2dx_3\Bigg|\\
&\qquad\leq\Bigg|\ir \mathcal{K}(x_1,x_2,x_3)\Big\{[\widetilde{\phi}(R_1 x_1, R_2 x_2,
R_1R_2 x_3)-\widetilde{\phi}(0,R_2 x_2,R_1R_2 x_3)] \\
&\hskip 6cm -[\widetilde{\phi}(R_1
x_1,0,0)-\widetilde{\phi}(0,0,0)]\Big\} dx_1dx_2dx_3\Bigg|\\
&\qquad\quad+\Bigg|\ir \mathcal{K}(x_1,x_2,x_3)(\widetilde{\phi}(R_1
x_1,0,0)-\widetilde{\phi}(0,0,0))dx_1dx_2dx_3\Bigg|\\
&\qquad\quad+\Bigg|\ir \mathcal{K}(x_1, x_2,
x_3)(\widetilde{\phi}(0,R_2 x_2,R_1R_2 x_3)-\widetilde{\phi}(0,R_2 x_2,0))dx_1dx_2dx_3\Bigg|\\
&\qquad\quad+\Bigg|\ir \mathcal{K}(x_1, x_2,
x_3)(\widetilde{\phi}(0,R_2 x_2,0)-\widetilde{\phi}(0,0,0))dx_1dx_2dx_3\Bigg|\\
&\qquad\quad+\Bigg|\ir \mathcal{K}(x_1, x_2,
x_3)\widetilde{\phi}(0,0,0)dx_1dx_2dx_3\Bigg|\\
&\qquad\lesssim\irb \frac{1}{|x_1| |x_2|
|x_3|}\Big(\Big|\frac{x_1x_2}{x_3}\Big|+\Big|\frac{x_3}{x_1x_2}\Big|\Big)^{-\tb}\\
&\hskip 6cm\times|R_1x_1|(|R_2x_2|+|R_1R_2x_3|)dx_1dx_2dx_3\\
&\qquad\quad+\int_{|x_1| \leq \frac{1}{R_1}}\frac{1}{|x_1|}|R_1x_1|dx_1\\
&\qquad\quad+\int_{|x_3| \leq \frac{1}{R_1R_2}}\int_{\Bbb R}\frac{1}{|x_2|
|x_3|}\Big(\Big|\frac{x_2}{R_1x_3}\Big|+\Big|\frac{R_1x_3}{x_2}\Big|\Big)^{-\tb}|R_1R_2x_3|dx_2dx_3\\
&\qquad\quad+\Bigg|\int_{|x_2| \leq \frac{1}{R_2}}\frac{1}{|x_2|}|R_2x_2|dx_2
+1\\
&\qquad \lesssim 1,
\end{align*}
where we apply conditions (R) and (C2.c) for the first and second term,
respectively, (C2.b) for the third and fourth term, and (C2.a) for
the last term above. Hence $\mathcal K$ satisfies (C3.a).

Similarly, for any $0 \le \beta+\gamma\leq 1$, n.b.f. $\widetilde{\phi}$ on
$\Bbb R$ and  $R>0,$ we can write
\begin{align*}
&\bigg|\Dy^\beta \Dz^\gamma \mathcal{K}(x_1,x_2,x_3)\widetilde{\phi}(R x_1)dx_1\bigg|\\
&\qquad=\lim\limits_{\epsilon \rightarrow 0}\bigg|\int_{\epsilon
\leq |x_1| \leq \frac{1}{R}} \Dy^\beta \Dz^\gamma \mathcal{K}(x_1, x_2,
x_3)\widetilde{\phi}(R x_1)dx_1\bigg|
\end{align*}
and
\begin{align*}
&\bigg|\int_{\epsilon \leq |x_1| \leq \frac{1}{R}} \Dy^\beta
\Dz^\gamma \mathcal{K}(x_1,x_2,x_3)\widetilde{\phi}(R x_1)dx_1\bigg|
\\&\qquad\leq\bigg|\int_{\epsilon \leq |x_1| \leq \frac{1}{R}}
\Dy^\beta \Dz^\gamma \mathcal{K}(x_1,x_2,x_3)(\widetilde{\phi}(R x_1)-\widetilde{\phi}(0))dx_1\bigg|\\
&\qquad\quad+\bigg|\int_{\epsilon \leq |x_1| \leq \frac{1}{R}} \Dy^\beta
\Dz^\gamma \mathcal{K}(x_1,x_2,x_3)\widetilde{\phi}(0)dx_1\bigg|
\\&\qquad\lesssim \int_{ |x_1| \leq \frac{1}{R}} \frac{|h_2|^\tab |h_3|^\tac}{|x_1|
|x_2|^{\tab+1}
|x_3|^{\tac+1}}\Big(\Big|\frac{x_1x_2}{x_3}\Big|+\Big|\frac{x_3}{x_1x_2}\Big|\Big)^{-\tb}|Rx_1|dx_1
 \\&\qquad\quad+\frac{|h_2|^\tab |h_3|^\tac}{|x_2|^{\tab+1}
|x_3|^{\tac+1}}\Big(\Big|\frac{x_2}{Rx_3}\Big|+\Big|\frac{Rx_3}{x_2}\Big|\Big)^{-\tb}
+ \frac{|h_2|^\tab |h_3|^\tac}{|x_2|^{\tab+1}
|x_3|^{\tac+1}}\Big(\Big|\frac{\epsilon x_2}{ x_3}\Big|+\Big|\frac{
x_3}{\epsilon x_2}\Big|\Big)^{-\tb}
\\&\qquad\lesssim \frac{|h_2|^\tab |h_3|^\tac}{|x_2|^{\tab+1}
|x_3|^{\tac+1}}\Big(\Big|\frac{x_2}{Rx_3}\Big|+\Big|\frac{Rx_3}{x_2}\Big|\Big)^{-\tb}
+ \frac{|h_2|^\tab |h_3|^\tac}{|x_2|^{\tab+1}
|x_3|^{\tac+1}}\Big(\Big|\frac{x_2}{\epsilon x_3}\Big|+\Big|\frac{\epsilon
x_3}{x_2}\Big|\Big)^{-\tb}.
\end{align*}
Taking $\epsilon \rightarrow 0$, then (C3.b) is obtained.

Finally we verify (C3.c). For any $0\leq \alpha \leq 1$, n.b.f.
$\widetilde{\phi}$ on $\Bbb R^2$ and  $R_1,R_2>0,$ we write
\begin{eqnarray*}
&&\bigg|\iint\Dx^\alpha \mathcal{K}(x_1,x_2,x_3)\widetilde{\phi}(R_1 x_2,
R_2x_3)dx_2dx_3\bigg|\\
&&\qquad=\lim_{\epsilon_1,\epsilon_2 \rightarrow 0}
\bigg|\int_{\epsilon_2\leq |x_3|\leq
\frac{1}{R_2}}\int_{\epsilon_1\leq |x_2|\leq \frac{1}{R_1}}\Dx^\alpha
\mathcal{K}(x_1,x_2,x_3)  \widetilde{\phi}(R_1 x_2, R_2x_3)dx_2dx_3\bigg|
\end{eqnarray*}
and
\begin{align*}
&\bigg|\int_{\epsilon_2\leq |x_3|\leq
\frac{1}{R_2}}\int_{\epsilon_1\leq |x_2|\leq \frac{1}{R_1}}\Dx^\alpha
\mathcal{K}(x_1,x_2,x_3)\widetilde{\phi}(R_1 x_2, R_2x_3)dx_2dx_3\bigg|\\
&\qquad\leq\bigg|\int_{\epsilon_2\leq |x_3|\leq
\frac{1}{R_2}}\int_{\epsilon_1\leq |x_2|\leq \frac{1}{R_1}}\Dx^\alpha
\mathcal{K}(x_1,x_2,x_3)\big(\widetilde{\phi}(R_1 x_2, R_2x_3)-\widetilde{\phi}(0, 0)\big)dx_2dx_3\bigg|\\
&\qquad\quad+\bigg|\int_{\epsilon_2\leq |x_3|\leq
\frac{1}{R_2}}\int_{\epsilon_1\leq |x_2|\leq \frac{1}{R_1}}\Dx^\alpha
\mathcal{K}(x_1,x_2,x_3)\widetilde{\phi}(0, 0)dx_2dx_3\bigg|\\
&\qquad\lesssim\int_{ |x_3|\leq
\frac{1}{R_2}}\int_{ |x_2|\leq \frac{1}{R_1}}
\frac{|h_1|^\taa}{|x_1|^{\taa+1} |x_2|
|x_3|}\Big(\Big|\frac{x_1x_2}{x_3}\Big|+\Big|\frac{x_3}{x_1x_2}\Big|\Big)^{-\tb}(|R_1x_2|+|R_2x_3|)dx_2dx_3\\
&\qquad\quad+\frac{|h_1|^\taa}{|x_1|^{\taa+1} }
\\&\qquad\lesssim \frac{|h_1|^\taa}{|x_1|^{\taa+1} }.
\end{align*}
Thus (C3.c) is obtained. This completes the proof of part (b), and
hence Theorem \ref{Thm 1.4} is concluded.

\section{Examples and applications}

As mentioned in section 1, the original motivation for this paper is
to introduce a class of singular integral operators which cover those studied by
Ricci and Stein in \cite{RS}. Now in this section we show that a special
class of singular integrals studied by Ricci and Stein \cite{RS}
belongs to our class of singular integrals. Indeed, for
$(x_1,x_2,x_3)\in \Bbb R^3,$ it was proved in \cite{RS} that
$\mathcal{K}(x_1, x_2, x_3)=\sum_{j,k}2^{2j+2k}
\phi^{(1)}(2^jx_1)\phi^{(2)}(2^k x_2,2^{j+k}x_3)$, where
$\phi^{(1)}$ and $\phi^{(2)}$ are defined as in (\ref{eq 3.1}), is a
distribution kernel on $\Bbb R^3.$ The following result shows that
this kernel satisfies the regularity condition (R) and the
cancellation conditions (C2.a) -- (C2.c).

\begin{thm}\label{Thm 4.1}
Suppose that $\phi^{(1)}$ and $\phi^{(2)}$ are defined as in \eqref{eq 3.1} and
$$\mathcal{K}(x_1, x_2, x_3)=\sum_{j,k}2^{2j+2k}\phi^{(1)}(2^jx_1) \phi^{(2)}(2^kx_2,2^{j+k}x_3).$$
Then
\begin{equation}\label{eq 4.1}
|\partial_{x_1}^\alpha\partial_{x_2}^\beta\partial_{x_3}^\gamma
\mathcal{K}(x_1, x_2, x_3)| \leq
\frac{C_{\alpha,\beta,\gamma,\tb}}{|x_1|^{\alpha+1} |x_2|^{\beta+1}
|x_3|^{\gamma+1}}
\Big(\Big|\frac{x_1x_2}{x_3}\Big|+\Big|\frac{x_3}{x_1x_2}\Big|\Big)^{-\tb}
\end{equation}
for all $\alpha,\beta,\gamma\geq 0$ and $0<\tb<1;$
\begin{equation}\label{eq 4.2}
\bigg|\int_{\delta_1 \leq |x_1|\leq r_1}\int_{\delta_2 \leq
|x_2|\leq r_2}\int_{\delta_3 \leq |x_3|\leq r_3} \mathcal{K}(x_1,
x_2, x_3)dx_1dx_2dx_3\bigg| \leq C
\end{equation}
uniformly for all $\delta_1,\delta_2,\delta_3,r_1,r_2,r_3>0;$
\begin{align}\label{eq 4.3}
&\bigg|\int_{\delta \leq|x_1|\leq
r}\partial_{x_2}^\beta\partial_{x_3}^\gamma
\mathcal{K}(x_1, x_2, x_3)dx_1\bigg|\\
&\qquad\leq \frac{C_{\beta,\gamma,\tb}}{|x_1|^{\beta+1}
|x_3|^{1+\gamma}}\Bigg(\frac{1}{\big(|\frac{rx_2}{x_3}|+|\frac{x_3}{rx_2}|
\big)^{\tb}}+\frac{1}{\big(|\frac{\delta
x_2}{x_3}|+|\frac{x_3}{\delta x_2}| \big)^{\tb}}\Bigg),\nonumber
\end{align}
for all $\delta, r>0, \beta, \gamma\geq 0$ and $0<\tb<1;$
\begin{equation}\label{eq 4.4}
\bigg|\int_{\delta_1 \leq|x_2|\leq r_1}\int_{\delta_2 \leq|x_3|\leq
r_2}\partial_{x_1}^\alpha \mathcal{K}(x_1, x_2, x_3)dx_2 dx_3 \bigg|
\leq \frac{C_{\alpha}}{|x_1|^{\alpha+1}}
\end{equation}
uniformly for all $\delta_1,\delta_2,r_1,r_2>0$ and $\alpha\geq 0.$
\end{thm}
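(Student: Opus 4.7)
The plan is to verify \eqref{eq 4.1}--\eqref{eq 4.4} by direct substitution of the defining series into each expression, exploiting three structural features of $\phi^{(1)}$ and $\phi^{(2)}$: compact support in the unit ball, smoothness of all derivatives, and the moment-vanishing conditions \eqref{eq 3.4} up to order $10$. Because every summand is already adapted to the Zygmund dilation via the anisotropic rescaling $(x_1,x_2,x_3)\mapsto(2^jx_1,2^kx_2,2^{j+k}x_3)$, each sharp bound I prove will automatically carry the correct Zygmund scaling, and the anisotropy factor $(|x_1x_2/x_3|+|x_3/x_1x_2|)^{-\tb}$ must emerge from the geometry of the triple scale constraint in the series rather than from any choice of test function.

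For the regularity bound \eqref{eq 4.1}, I would differentiate the series termwise to obtain
$$\partial_{x_1}^\alpha\partial_{x_2}^\beta\partial_{x_3}^\gamma\mathcal{K}=\sum_{j,k}2^{(2+\alpha+\gamma)j+(2+\beta+\gamma)k}(\partial^\alpha\phi^{(1)})(2^jx_1)(\partial_{x_2}^\beta\partial_{x_3}^\gamma\phi^{(2)})(2^kx_2,2^{j+k}x_3),$$
and restrict the sum, using the unit-ball supports, to indices with $2^j\le 1/|x_1|$, $2^k\le 1/|x_2|$, and $2^{j+k}\le 1/|x_3|$. Splitting into two regimes, I would treat $|x_3|\le|x_1x_2|$ (where the third constraint is redundant) by direct geometric summation in $j,k$, producing $|x_1|^{-2-\alpha-\gamma}|x_2|^{-2-\beta-\gamma}$, which is bounded by the desired right-hand side precisely because $(|x_3|/|x_1x_2|)^{1+\gamma-\tb}\le 1$ with $1+\gamma-\tb>0$; and treat $|x_3|>|x_1x_2|$ (where the third constraint is binding) by setting $m=j+k$, summing in $j$ with the endpoint dominating, and then summing in $m$, which forces exactly the factor $(|x_1x_2|/|x_3|)^{\tb}$. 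The two regimes combine into the single Zygmund factor $(|x_1x_2/x_3|+|x_3/x_1x_2|)^{-\tb}$.

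For the cancellation conditions \eqref{eq 4.2} and \eqref{eq 4.4}, I would commute the integral with the sum and invoke the moments $\int\phi^{(1)}\,dx_1=0$ and $\iint\phi^{(2)}\,dx_2\,dx_3=0$ from \eqref{eq 3.4}. Every full-range integral over $\mathbb R$ (respectively $\mathbb R^2$) vanishes, so the remaining contribution comes from integrating $\phi^{(1)}(2^jx_1)$ (respectively $\phi^{(2)}(2^kx_2,2^{j+k}x_3)$) over the complement of its support inside the truncated box. This produces a factor comparable to $\min(2^j\delta_i,(2^jr_i)^{-1},1)$ to a positive power, which is summable in $(j,k)$ and yields the uniform bound in \eqref{eq 4.2} and the one-variable size bound in \eqref{eq 4.4} after the same case split on $|x_3|$ versus $|x_1x_2|$ as above.

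The main obstacle is \eqref{eq 4.3}, where only $x_1$ is integrated while $x_2,x_3$ derivatives remain, and the right-hand side contains two separate Zygmund factors, one involving the upper cutoff $r$ and one involving the lower cutoff $\delta$. The plan is to isolate the one-dimensional integral $\int_{\delta\le|x_1|\le r}2^j\phi^{(1)}(2^jx_1)\,dx_1$, use $\int\phi^{(1)}=0$ to rewrite it as $-\int_{|y|\le 2^j\delta}\phi^{(1)}(y)\,dy$ when $2^jr\ge 1$ and as the integral over the truncated band when $2^jr<1$, and keep track of the resulting size comparable to $\min(2^jr,(2^j\delta)^{-1},1)$. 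The sum in $k$, constrained by the supports of $\phi^{(2)}$, produces the powers of $|x_2|$ and $|x_3|$ on the right of \eqref{eq 4.3}, while the sum in $j$ decouples into the two regimes $2^j\lesssim 1/r$ and $2^j\gtrsim 1/\delta$, giving precisely the two anisotropy terms $(|rx_2/x_3|+|x_3/rx_2|)^{-\tb}$ and $(|\delta x_2/x_3|+|x_3/\delta x_2|)^{-\tb}$. The delicate point, where I expect the bookkeeping to be the most demanding, is to simultaneously respect all three scale constraints together with the band truncation and verify that the asymmetric bound in \eqref{eq 4.3} holds without extraneous logarithmic factors.
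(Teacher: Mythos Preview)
Your plan is correct and follows the same strategy as the paper: differentiate the series termwise, invoke the vanishing moments \eqref{eq 3.4} to convert each truncated integral into a boundary contribution, and sum the resulting geometric series in $(j,k)$. The paper phrases the summation via rapid-decay bounds $(1+2^j|x_1|)^{-N}$ and packages the double sum into an auxiliary estimate (Lemma~\ref{Lem 4.2}), whereas you exploit the unit-ball supports to hard-truncate the $(j,k)$-sum and split by hand on $|x_3|\lessgtr|x_1x_2|$; the two routes are equivalent here, your version being slightly more elementary. Two small corrections are worth noting. First, in \eqref{eq 4.3} the decomposition is not ``$2^j\lesssim 1/r$ versus $2^j\gtrsim 1/\delta$'': writing $\int_{\delta\le|x_1|\le r}=\int_{|x_1|\le r}-\int_{|x_1|\le\delta}$, each piece vanishes (by compact support plus $\int\phi^{(1)}=0$) once $2^jr\ge 1$, respectively $2^j\delta\ge 1$, so the two pieces are supported on $\{2^j<1/r\}$ and $\{2^j<1/\delta\}$ and each separately produces one of the two Zygmund terms on the right. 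Second, in the borderline case $\alpha=\beta$ of \eqref{eq 4.1} (and analogously in \eqref{eq 4.3}) your inner $j$-sum becomes logarithmic rather than geometric; the slack $\tb<1$ absorbs that logarithm, which is precisely why Lemma~\ref{Lem 4.2} in the paper carries the exponent $-(a\wedge c)+1-\varepsilon$ rather than $-(a\wedge c)$.
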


To show Theorem \ref{Thm 4.1}, we need the following simple lemmas.

\begin{lem}\label{Lem 4.2}
Suppose $a,b,c>0$  with $b> a$ and $r_1,r_2,r_3>0$. Then, for all
$0<\varepsilon<1,$
$$\sum_j 2^{ja}\frac{1}{(1+2^jr_1)^b}\frac{1}{(r_2+2^jr_3)^c}\leq C_\varepsilon
r_1^{-a}r_2^{-c}\Big(1+\frac{r_3}{r_1r_2}\Big)^{-(a\wedge c)
+1-\varepsilon}.$$
\end{lem}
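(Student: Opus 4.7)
\textbf{Proof Plan for Lemma \ref{Lem 4.2}.}
The plan is to reduce the sum to a one-parameter geometric sum in the quantity $\alpha:=r_3/(r_1 r_2)$, and then carry out a routine case analysis at two scales.

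First, reindex the sum by $k=j+\log_2 r_1$ so that $2^j = 2^k/r_1$. This pulls out $r_1^{-a}$, and the second factor becomes
$$
\frac{1}{(r_2 + 2^k r_3/r_1)^c}=\frac{r_2^{-c}}{(1+2^k\alpha)^c}.
$$
Hence it suffices to establish
$$
T(\alpha):=\sum_{k\in\mathbb{Z}}\frac{2^{ka}}{(1+2^k)^b(1+2^k\alpha)^c}\;\lesssim\;(1+\alpha)^{-(a\wedge c)+1-\varepsilon},
$$
from which the lemma follows with a factor of $r_1^{-a}r_2^{-c}$.

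Next, I would treat $0<\alpha\leq 1$ and $\alpha>1$ separately. For $\alpha\leq 1$, the last factor is bounded below by $1$, so $T(\alpha)\lesssim \sum_k 2^{ka}(1+2^k)^{-b}$, and this sum converges by the hypothesis $b>a$ (small $k$: the sum behaves like $\sum 2^{ka}$; large $k$: like $\sum 2^{k(a-b)}$). Since $(1+\alpha)\asymp 1$ in this range, the stated bound is trivial. For $\alpha>1$, I would split at the two natural thresholds $2^k\sim 1/\alpha$ and $2^k\sim 1$, giving three regimes:
\begin{itemize}
\item $2^k\leq 1/\alpha$: both denominators are $\asymp 1$, so the contribution is $\asymp\sum_{2^k\leq 1/\alpha}2^{ka}\asymp \alpha^{-a}$.
\item $1/\alpha<2^k\leq 1$: $(1+2^k)\asymp 1$ and $(1+2^k\alpha)\asymp 2^k\alpha$, so the summand is $\asymp 2^{k(a-c)}\alpha^{-c}$; summing a geometric series gives $\alpha^{-c}$ if $a>c$, $\alpha^{-a}$ if $a<c$, and $\alpha^{-a}\log\alpha$ if $a=c$.
\item $2^k>1$: both denominators are large, giving $\asymp 2^{k(a-b-c)}\alpha^{-c}$, a convergent geometric tail (since $b>a$ and $c>0$) bounded by $\alpha^{-c}$.
\end{itemize}
Adding up, $T(\alpha)\lesssim \alpha^{-(a\wedge c)}$ when $a\neq c$ and $T(\alpha)\lesssim \alpha^{-a}\log\alpha$ when $a=c$; in either case
$$
T(\alpha)\lesssim \alpha^{-(a\wedge c)+\varepsilon/2}\lesssim (1+\alpha)^{-(a\wedge c)+1-\varepsilon},
$$
which is (much) weaker than what is proved.

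The only real subtlety is the borderline case $a=c$, where the middle regime contributes a $\log\alpha$ factor; the $\varepsilon$-loss built into the statement is precisely what absorbs it. Everything else is a standard geometric-sum estimate enabled by the strict inequality $b>a$.
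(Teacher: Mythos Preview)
Your argument is correct and is essentially the paper's own proof: both split the sum at the two natural thresholds $2^j\sim r_1^{-1}$ and $2^j\sim r_2/r_3$ (your $2^k\sim 1$ and $2^k\sim 1/\alpha$ after normalizing to the single parameter $\alpha=r_3/(r_1r_2)$) and then sum geometric series in each regime, with the $\varepsilon$-loss absorbing the logarithm in the borderline case $a=c$. Two cosmetic points: the shift $k=j+\log_2 r_1$ is not integer-valued, but this is harmless since you are really summing over the dyadic sequence $2^j r_1$; and your final display $\alpha^{-(a\wedge c)+\varepsilon/2}\lesssim(1+\alpha)^{-(a\wedge c)+1-\varepsilon}$ fails for $\varepsilon>2/3$, though replacing $\varepsilon/2$ by any exponent $\le 1-\varepsilon$ fixes this immediately given your stronger bound $T(\alpha)\lesssim\alpha^{-(a\wedge c)}\log\alpha$.
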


\begin{proof}
We first write
\begin{align*}
&\sum_j 2^{ja}\frac{1}{(1+2^jr_1)^b}\frac{1}{(r_2+2^jr_3)^c}
= \sum_j 2^{ja}\frac{1}{(1+2^jr_1)^b}\frac{1}{\big(1+2^j\frac{r_3}{r_2}\big)^c}\frac{1}{r_2^c}\\
&\qquad \lesssim  \sum_{j:2^j>r_1^{-1},2^j>r_2r_3^{-1}}
2^{ja}\frac{1}{(2^jr_1)^b}\frac{1}{\big(2^j\frac{r_3}{r_2}\big)^c}\frac{1}{r_2^c}
+\sum_{j:2^j>r_1^{-1},2^j\leq r_2r_3^{-1}}2^{ja}\frac{1}{(2^jr_1)^b}\frac{1}{r_2^c}\\
&\qquad\qquad +\sum_{j:2^j\leq r_1^{-1},2^j>r_2r_3^{-1}}
2^{ja}\frac{1}{\big(2^j\frac{r_3}{r_2}\big)^c}\frac{1}{r_2^c}
+\sum_{j:2^j\leq r_1^{-1},2^j\leq r_2r_3^{-1}} 2^{ja}\frac{1}{r_2^c}\\
&\qquad := I+ II+ III + IV.
\end{align*}
For term $I,$ we observe that
\begin{eqnarray*}
I\lesssim\Big(1+\frac{r_3}{r_1r_2}\Big)^{a-b-c}r_1^{-a}r_2^{-c}\Big(\frac{r_3}{r_1r_2}\Big)^{b-a}
\lesssim r_1^{-a}r_2^{-c}\Big(1+\frac{r_3}{r_1r_2}\Big)^{-c}.
\end{eqnarray*}
For $II$, since $r_3<r_1r_2$,
\begin{eqnarray*}
II \lesssim r_1^{-a}r_2^{-c} \lesssim
r_1^{-a}r_2^{-c}\Big(1+\frac{r_3}{r_1r_2}\Big)^{-c}.
\end{eqnarray*}
For $III,$ note that $r_3>r_1r_2$. We consider three cases. In the
first case where $a>c$, we obtain
\begin{eqnarray*}
III \lesssim r_1^{-a}r_2^{-c}\Big(\frac{r_3}{r_1r_2}\Big)^{-c}
\lesssim r_1^{-a}r_2^{-c}\Big(1+\frac{r_3}{r_1r_2}\Big)^{-c}.
\end{eqnarray*}
If $a<c$, then
\begin{eqnarray*}
III \lesssim r_1^{-a}r_2^{-c}\Big(\frac{r_3}{r_1r_2}\Big)^{-a}
\lesssim r_1^{-a}r_2^{-c}\Big(1+\frac{r_3}{r_1r_2}\Big)^{-a}.
\end{eqnarray*}
When $a=c$, we have
\begin{eqnarray*}
III \lesssim r_3^{-c}\log\Big(\frac{r_3}{r_1r_2}\Big)\lesssim
r_1^{-a}r_2^{-c}\Big(1+\frac{r_3}{r_1r_2}\Big)^{-a+1-\tb}.
\end{eqnarray*}
Finally, for term $IV,$ we have
\begin{eqnarray*}
IV \lesssim \Big(\frac{r_1r_2}{r_3+r_1r_2}\Big)^ar_1^{-a}r_2^{-c}=
r_1^{-a}r_2^{-c}\Big(1+\frac{r_3}{r_1r_2}\Big)^{-a}.
\end{eqnarray*}
These estimates yield the required bound and Lemma \ref{Lem 4.2} is
proved.
\end{proof}

\begin{lem}\label{Lem 4.3}
For any $N>0$, $r>0$ and $k \in \mathbb{Z},$ we have
$$
\int_{\{x_1\in \Bbb R:|x_1|\leq r\}} \frac{1}{(1+2^k|x_1|)^N} dx_1 \lesssim \frac{r}{1+2^kr}
$$
and
$$
\int_{\{x_1\in \Bbb R: |x_1|> r\}} \frac{1}{(1+2^k|x_1|)^N} dx_1 \lesssim
\frac{2^{-k}}{(1+2^kr)^{N-1}}.
$$
\end{lem}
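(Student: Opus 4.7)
The plan is to reduce both inequalities to standard tail/truncation estimates for the one-dimensional weight $(1+|y|)^{-N}$ by performing the change of variable $y=2^kx_1$, which converts the two integrals into
\[
2^{-k}\int_{|y|\le 2^kr}\frac{dy}{(1+|y|)^N}\quad\text{and}\quad 2^{-k}\int_{|y|>2^kr}\frac{dy}{(1+|y|)^N},
\]
respectively. After this rescaling the only scale left is the dimensionless quantity $s:=2^kr$, and the target bounds on the right-hand sides become $2^{-k}\cdot\frac{s}{1+s}$ and $2^{-k}\cdot(1+s)^{-(N-1)}$, so it is enough to verify the correct behavior in $s$.

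For the first inequality I would split into the cases $s\le 1$ and $s>1$. When $s\le 1$ the integrand is comparable to $1$ on the region $|y|\le s$, so the integral is comparable to $s$, which multiplied by $2^{-k}$ gives $\lesssim r$, matching $\frac{r}{1+2^kr}\sim r$. When $s>1$ I bound the truncated integral by the full one $\int_{\mathbb R}(1+|y|)^{-N}dy$, which is a finite constant (this uses $N>1$), so the integral is $\lesssim 2^{-k}$; this matches $\frac{r}{1+2^kr}\sim 2^{-k}$.

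For the second inequality the same split works: when $s\le 1$ the tail is controlled by the full integral on $\mathbb R$ and is $\lesssim 2^{-k}$, which agrees with $\frac{2^{-k}}{(1+s)^{N-1}}\sim 2^{-k}$; when $s>1$ a direct computation of $\int_{|y|>s}(1+|y|)^{-N}\,dy\lesssim s^{-(N-1)}$ gives the bound $2^{-k}s^{-(N-1)}$, which matches $\frac{2^{-k}}{(1+s)^{N-1}}$. The entire argument is elementary; the only subtlety is the implicit assumption $N>1$ needed so that the tail integrals converge and yield the stated decay, and once this is understood there is no real obstacle beyond bookkeeping of the two regimes $2^kr\le 1$ and $2^kr>1$.
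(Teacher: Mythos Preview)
Your argument is correct and is essentially the same as the paper's: both split into the two regimes $2^kr\le 1$ and $2^kr>1$ and use the trivial bound on the integrand in one regime and the finiteness of $\int_{\mathbb R}(1+|y|)^{-N}\,dy$ (respectively the tail bound $\int_{|y|>s}(1+|y|)^{-N}\,dy\lesssim s^{-(N-1)}$) in the other. The only cosmetic difference is that you perform the substitution $y=2^kx_1$ up front, whereas the paper works directly in $x_1$ and invokes the change of variable only when computing the full integral over $\mathbb R$; you also make explicit the implicit hypothesis $N>1$ that the paper's proof (and statement) silently requires.
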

\begin{proof}
We consider two cases. For the case
$r \leq 2^{-k}$, we clearly have $\int_{|x_1|\leq r}
\frac{1}{(1+2^k|x_1|)^N} dx_1 \lesssim r$. The second inequality follows from
\begin{equation}\label{eq:2.14a}
\int_{\Bbb R} \frac{1}{(1+2^k|x_1|)^N} dx_1 = 2^{-k}\int_{\Bbb R}
\frac{1}{(1+|x_1|)^N} dx_1\lesssim 2^{-k}.
\end{equation}
If $r > 2^{-k}$, then the first inequality follows again from \eqref{eq:2.14a}
while the second follows from
\begin{eqnarray*}
\int_{|x_1>  r} \frac{1}{(1+2^k|x_1|)^N} dx_1 \leq \int_{|x_1|> r}
\frac{1}{(2^k|x_1|)^N} dx_1 \lesssim \frac{2^{-k}}{(1+2^kr)^{N-1}}.
\end{eqnarray*}
The proof of Lemma \ref{Lem 4.3} is finished.
\end{proof}

We now return to show Theorem \ref{Thm 4.1}.
\vskip 0.5cm

\begin{proof}[Proof of Theorem \ref{Thm 4.1}]
 We prove the regularity estimate (\ref{eq 4.1}) first. By the definition of
 $\mathcal K$ and the conditions on $\phi^{(1)}$ and $\phi^{(2)},$
 we have
\begin{align*}
|\partial_{x_1}^\alpha\partial_{x_2}^\beta\partial_{x_3}^\gamma \mathcal K(x_1, x_2, x_3)|
\lesssim \sum_{j,k}\frac{2^{2j+2k+j(\alpha+\gamma)+k(\beta+\gamma)}}
{(1+2^j|x_1|)^{3+\alpha+\gamma}(1+2^k|x_2|+2^{j+k}|x_3|)^3}.
\end{align*}
Note that
\begin{align*}\label{eq 4.5}
\sum_{k}
\frac{2^{2k+k(\beta+\gamma)}}{(1+2^k|x_2|+2^{j+k}|x_3|)^3}
&= \sum_{k:2^k \leq
(|x_2|+2^j|x_3|)^{-1}}\frac{2^{2k+k(\beta+\gamma)}}{(1+2^k|x_2|+2^{j+k}|x_3|)^3}\\ \nonumber
&\qquad + \sum_{k:2^k >
(|x_2|+2^j|x_3|)^{-1}}\frac{2^{2k+k(\beta+\gamma)}}{(1+2^k|x_2|+2^{j+k}|x_3|)^3}\\ 
&\lesssim \frac{1}{(|x_2|+2^j|x_3|)^{2+\beta+\gamma}}.
\end{align*}
Inserting this estimate into the above inequality, we obtain
\begin{align*}
|\partial_{x_1}^\alpha\partial_{x_2}^\beta\partial_{x_3}^\gamma \mathcal K(x_1, x_2, x_3)|
&\lesssim \sum_{j}
\frac{2^{2j+j(\alpha+\gamma)}}{(1+2^j|x_1|)^{3+\alpha+\gamma} (|x_2|+2^j|x_3|)^{2+\beta+\gamma}}\\
&\lesssim \frac{1}{|x_1|^{\alpha+\gamma+2} |x_2|^{\beta+\gamma+2}
\big(1+|\frac{x_3}{x_1x_2}| \big)^{(\alpha \wedge
\beta)+\gamma+1+\tb}},
\end{align*}
where we apply Lemma \ref{Lem 4.2} with $a=2+\alpha+\gamma,
b=3+\alpha+\gamma, c=2+\beta+\gamma, r_1=|x_1|, r_2=|x_2|$ and
$r_3=|x_3|$ in the last inequality. This implies the required
estimate.

We now show the cancellation conditions (\ref{eq 4.2}) -- (\ref{eq 4.4}).
To verify (\ref{eq 4.3}), we observe that
\begin{eqnarray*}
&&\bigg|\int_{\delta \leq |x_1|\leq r}\partial_{x_2}^\beta\partial_{x_3}^\gamma \mathcal K(x_1, x_2, x_3)dx_1\bigg|\\
&&\qquad\lesssim \sum_{j,k}2^{2j+2k+k\beta +(j+k)\gamma}\bigg|\int_{\delta
\leq |x_1|\leq r}\phi^{(1)}(2^jx_1)
(\partial_{x_2}^\beta\partial_{x_3}^\gamma\phi^{(2)})(2^k
x_2,2^{j+k}x_3)dx_1\bigg|.
\end{eqnarray*}
Note that, for all $N\geq 2,$
$$\bigg|\int_{|x_1|\leq r}\phi^{(1)}(2^jx_1)dx_1\bigg| \lesssim r,$$
and, by the vanishing condition of $\phi^{(1)},$
\begin{align*}
\bigg|\int_{|x_1|\leq r}\phi^{(1)}(2^jx_1)dx_1\bigg|
&=\bigg|\int_{|x_1|> r}\phi^{(1)}(2^jx_1)dx_1\bigg|\\
&\leq C_N\bigg|\int_{|x_1|> r}\frac{1}{(2^j|x_1|)^N}dx_1\bigg| \leq C_N 2^{-jN}r^{1-N}.
\end{align*}
Therefore,
$$\bigg|\int_{|x_1|\leq r}\phi^{(1)}(2^jx_1)dx_1\bigg| \leq C_N\frac{r}{(1+2^jr)^N}, $$
which implies
\begin{align*}
\bigg|\int_{\delta \leq |x_1|\leq
r}\partial_{x_2}^\beta\partial_{x_3}^\gamma \mathcal K(x_1, x_2, x_3)dx_1\bigg|
&\lesssim \sum_{j,k}
\frac{2^{2j+2k+j\gamma+k(\beta+\gamma)} r}{(1+2^jr)^{3+\gamma} (1+2^k|x_2|+2^{j+k}|x_3|)^3} \\
&\quad +\sum_{j,k}\frac{2^{2j+2k+j\gamma+k(\beta+\gamma)}
\delta}{(1+2^j\delta)^{3+\gamma} (1+2^k|x_2|+2^{j+k}|x_3|)^3}.
\end{align*}
Summing over $k$ first yields that the two summations above are
dominated by
$$\sum_{j}\big(\frac{r}{(1+2^jr)^{3+\gamma}}+\frac{\delta}{(1+2^j\delta)^{3+\gamma}}\big)
\frac{2^{2j+j\gamma}}{(|x_2|+2^j|x_3|)^{2+\beta+\gamma}}.$$
Applying Lemma \ref{Lem 4.2} with $a=2+\gamma, b=3+\gamma,
c=2+\gamma+\beta, r_1=r$ or $\delta$, $r_2=|x_2|, r_3=|x_3|$, we obtain
\begin{align*}
&\bigg|\int_{\delta \leq |x_1|\leq r}\partial_{x_2}^\beta\partial_{x_3}^\gamma \mathcal K(x_1, x_2, x_3)dx_1\bigg|\\
&\qquad\lesssim \frac{1}{r^{\gamma+1} |x_2|^{\beta+\gamma+2}
\big(1+|\frac{x_3}{rx_2}|
\big)^{\gamma+1+\tb}}+\frac{1}{\delta^{\gamma+1}
|x_2|^{\beta+\gamma+2} \big(1+|\frac{x_3}{\delta x_2}|
\big)^{\gamma+1+\tb}},
\end{align*}
which implies the desired cancellation condition (\ref{eq 4.3}).

To show the cancellation condition (\ref{eq 4.4}), we start with
\begin{align*}
&\bigg|\int_{\delta_1 \leq |x_2|\leq r_1}\int_{\delta_2 \leq
|x_3|\leq
r_2}\partial_{x_1}^\alpha \mathcal K(x_1, x_2, x_3)dx_2 dx_3 \bigg| \\
&\qquad \lesssim \sum_{j,k}2^{2j+2k+j\alpha}\bigg|\int_{|x_2|\leq
r_1}\int_{|x_3|\leq
r_2}(\partial_{x_1}^\alpha\phi^{(1)})(2^jx_1)\phi^{(2)}(2^k x_2,2^{j+k}x_3)dx_2 dx_3\bigg|\\
&\qquad\quad+  \sum_{j,k}2^{2j+2k+j\alpha}\bigg|\int_{|x_2|\leq
\delta_1}\int_{|x_3|\leq
\delta_2}(\partial_{x_1}^\alpha\phi^{(1)})(2^jx_1)\phi^{(2)}(2^k
x_2,2^{j+k}x_3)dx_2 dx_3 \bigg|.
\end{align*}
By the vanishing condition of $\phi^{(2)}$,
\begin{align*}
\int_{|x_2|\leq r_1}\int_{|x_3|\leq r_2}\phi^{(2)}(2^k x_2,2^{j+k}x_3)dx_2dx_3
&=\int_{|x_2|> r_1}\int_{|x_3|\leq r_2}\phi^{(2)}(2^k x_2,2^{j+k}x_3)dx_2dx_3\\
&\quad +\int_{|x_2|\leq r_1}\int_{|x_3|> r_2}\phi^{(2)}(2^kx_2,2^{j+k}x_3)dx_2dx_3\\
&\quad +\int_{|x_2|> r_1}\int_{|x_3|> r_2}\phi^{(2)}(2^kx_2,2^{j+k}x_3)dx_2dx_3.
\end{align*}
Applying the size condition of $\phi^{(2)}$ and Lemma \ref{Lem 4.3},
we obtain
\begin{align*}
&\bigg|\int_{|x_2|\leq r_1}\int_{|x_3|\leq r_2}\phi^{(2)}(2^k x_2,2^{j+k}x_3)dx_2dx_3\bigg|\\
&\lesssim \frac{2^{-k}}{(1+2^kr_1)^{3}}\frac{r_2}{1+2^{j+k}r_2}+
\frac{r_1}{1+2^{k}r_1}\frac{2^{-j-k}}{(1+2^{j+k}r_2)^{3}}
+\frac{2^{-k}}{(1+2^kr_1)^{3}}\frac{2^{-j-k}}{(1+2^{j+k}r_2)^{3}}.
\end{align*}
On other hand, the size condition on $\phi^{(2)}$ yields
\begin{eqnarray*}
\bigg|\int_{|x_2|\leq r_1}\int_{|x_3|\leq r_2}\phi^{(2)}(2^k
x_2,2^{j+k}x_3)dx_2d x_3\bigg| \leq
\frac{r_1}{1+2^kr_1}\frac{r_2}{1+2^{j+k}r_2}.
\end{eqnarray*}
Therefore,
\begin{align*}
&\sum_{j,k}2^{2j+2k+j\alpha}\Big| (\partial_{x_1}
\phi^{(1)})(2^jx_1)\int_{|x_2|\leq r_1}\int_{|x_3|\leq r_2}
\phi^{(2)}(2^kx_2,2^{j+k}x_3)(x_2,x_3)dx_2dx_3\Big|\\
&\qquad \lesssim
\sum_{j,k}\frac{2^{2j+2k+j\alpha}}{(1+2^j|x_1|)^{3+\alpha}}
\min\bigg\{\bigg(\frac{2^{-k}}{(1+2^kr_1)^3}\frac{r_2}{1+2^{j+k}r_2}+\frac{r_1}{1+2^{k}r_1}\frac{2^{-j-k}}{(1+2^{j+k}r_2)^{3}}\\
&\hskip 5.8cm+ \frac{2^{-k}}{(1+2^kr_1)^{3}}\frac{2^{-j-k}}{(1+2^{j+k}r_2)^{3}}\bigg),
\frac{r_1}{1+2^kr_1}\frac{r_2}{1+2^{j+k}r_2}\bigg\}.
\end{align*}
Summing over $k$ first and considering the four cases: (i)
$2^{k}\leq r_1^{-1}$ and $2^k \leq 2^{-j}r_2^{-1}$; (ii) $2^{k}\leq
r_1^{-1}$ and $2^k > 2^{-j}r_2^{-1}$; (iii) $2^{k}> r_1^{-1}$ and
$2^k \leq 2^{-j}r_2^{-1}$; (iv) $2^{k}> r_1^{-1}$ and $2^k >
2^{-j}r_2^{-1}$, we obtain that the last summation above is
dominated by
\begin{eqnarray*}
 \sum_{j}\frac{2^{2j+j\alpha}}{(1+2^j|x_1|)^{3+\alpha}}2^{-j},
\end{eqnarray*}
which yields the cancellation condition (\ref{eq 4.4}).

Finally the cancellation (\ref{eq 4.2}) follows directly from the
following estimates.
\begin{align*}
&\bigg|\int_{\delta_1 \leq |x_1|\leq r_1}\int_{\delta_2 \leq |x_2|\leq
r_2}\int_{\delta_3 \leq |x_3|\leq r_3} \mathcal K(x_1, x_2,
x_3)dx_1dx_2dx_3\bigg|\\
&\qquad \lesssim \bigg|\sum_{j,k}2^{2j+2k}\int_{\delta_1\leq |x_1|\leq
r_1} \phi^{(1)}(2^jx_1)dx_1 \\
&\hskip 3cm\times \int_{\delta_2\leq |x_2|\leq
r_2}\int_{\delta_3\leq |x_3|\leq r_3}
\phi^{(2)}(2^kx_2,2^{j+k}x_3)(x_2,x_3)dx_2 dx_3\bigg|\\
&\qquad\lesssim \sum_{j}2^{2j}\bigg(\frac{r_1}{(1+2^jr_1)^3}2^{-j}
+\frac{\delta_1}{(1+2^j\delta_1)^3}2^{-j}\bigg)\\
&\qquad \lesssim 1.
\end{align*}
The proof of Theorem \ref{Thm 4.1} is complete.
\end{proof}

As mentioned in section 1, a special class of singular integral
operators $T_{\frak z}$ considered by Ricci and Stein \cite{RS} is of the
form $T_{\frak z}f=f* \mathcal{K}$, where
$$\mathcal{K}(x_1, x_2, x_3) =\sum _{k,j\in {\Bbb Z}}2^{2(k+j)}
  \phi\Big(2^jx_1,2^k x_2,2^{j+k}x_3\Big)$$
and the function $\phi$ is supported in a unit cube in $\Bbb R^3$
and satisfies a certain amount of uniform smoothness with cancellation conditions
\begin{equation*}
\int _{{\Bbb R}^2}\phi(x_1, x_2, x_3)dx_1dx_2=\int _{{\Bbb R}^2} \phi
(x_1, x_2, x_3)dx_2dx_3=\int _{{\Bbb R}^2}\phi (x_1, x_2, x_3)dx_3dx_1=0.
\end{equation*}
Fefferman and Pipher \cite{FP} showed that the above cancellation conditions are necessary for the $L^2$ boundedness for singular integral $T_{\frak z}$. Moreover, if $\phi$ satisfies the
above cancellation conditions, then $\phi$ can be decomposed by
$\phi=\phi_1+\phi_2,$ where $\phi_1$ and $\phi_2$ have the
following cancellation conditions
\begin{equation*}
\int _{\Bbb R}\phi_1(x_1, x_2, x_3)dx_1=\int _{{\Bbb R}^2}
\phi_1(x_1, x_2, x_3)dx_2dx_3=0
\end{equation*}
and
\begin{equation*}
\int _{\Bbb R}\phi_2(x_1, x_2, x_3)dx_2=\int _{{\Bbb R}^2}
\phi_2(x_1, x_2, x_3)dx_1dx_3=0.
\end{equation*}
This means that the operator $T_{\frak z}$ studied by Ricci and
Stein can be decomposed as $T_{\frak z}=T^1_{\frak
z}+T^2_{\frak z},$ where the kernels of $T^1_{\frak z}$ and
$T^2_{\frak z}$ are given, respectively, by
$$\mathcal K_1(x_1, x_2, x_3)=\sum _{k,j\in {\Bbb Z}}2^{2(k+j)}\phi_1\Big(2^jx_1,2^kx_2,2^{j+k}x_3
\Big)$$ and
$$\mathcal K_2(x_1, x_2, x_3)=\sum _{k,j\in {\Bbb Z}}2^{2(k+j)}\phi_2\Big(2^jx_1,2^kx_2,2^{j+k}x_3
\Big).$$

Theorem \ref{Thm 4.1} shows that the kernel $\mathcal K_1$ satisfies
the regularity (R) and cancellation conditions (C2.a) -- (C2.c)
while the kernel $\mathcal K_2$ satisfies the regularity (R) and
cancellation conditions (C$2^\prime.$a) -- (C$2^\prime.$c).
Therefore, these operators $\mathcal K_1$ and $\mathcal K_2$ belong to our class.

\begin{rem}
Actually, based on the proof of  Theorem \ref{Thm 4.1}, we note
that the kernel
$$\mathcal{K}(x_1, x_2, x_3)=\sum_{j,k\in \Bbb Z} 2^{2j+2k}\phi^{(1)}(2^jx_1) \phi^{(2)}(2^kx_2,2^{j+k}x_3)$$
as in  Theorem \ref{Thm 4.1} satisfies the following stronger conditions
\begin{itemize}\baselineskip=35pt
\item[(i)] $\displaystyle |\partial_{x_1}^\alpha\partial_{x_2}^\beta\partial_{x_3}^\gamma
\mathcal{K}(x_1, x_2, x_3)| \leq
\frac{C_{\alpha,\beta,\gamma,\tb}}{|x_1|^{\alpha+\gamma+2}
|x_2|^{\beta+\gamma+2} \big(1+|\frac{x_3}{x_1x_2}| \big)^{(\alpha
\wedge \beta)+\gamma+1+\tb}}$;
\item[(ii)] $\displaystyle \bigg|\int_{\delta_1 \leq |x_1|\leq r_1}\int_{\delta_2 \leq
|x_2|\leq r_2}\int_{\delta_3 \leq |x_3|\leq r_3} \mathcal{K}(x_1,
x_2, x_3)dx_1dx_2dx_3\bigg| \leq C$
\item[] \hskip 5cm uniformly for all $\delta_1,\delta_2,\delta_3,r_1,r_2,r_3>0$;
\item[(iii)] $\displaystyle \bigg|\int_{\delta \leq|x_1|\leq r}\partial_{x_2}^\beta\partial_{x_3}^\gamma
\mathcal{K}(x_1, x_2, x_3)dx_1\bigg|$
\item[] $\displaystyle \leq C_{\beta,\gamma,\tb}\bigg(\frac{1}{r^{\gamma+1}
|x_2|^{\beta+\gamma+2} \big(1+|\frac{x_3}{rx_2}|
\big)^{\gamma+1+\tb}}+\frac{1}{\delta^{\gamma+1}
|x_2|^{\beta+\gamma+2} \big(1+|\frac{x_3}{\delta x_2}|
\big)^{\gamma+1+\tb}}\bigg)$
\item[] \hskip 5cm for all $\delta, r>0, \beta, \gamma\geq 0$ and $0<\tb<1$;
\item[(iv)] $\displaystyle \bigg|\int_{\delta_1 \leq|x_2|\leq r_1}\int_{\delta_2 \leq|x_3|\leq
r_2}\partial_{x_1}^\alpha \mathcal{K}(x_1, x_2, x_3)dx_2 dx_3 \bigg|
\leq \frac{C_{\alpha}}{|x_1|^{\alpha+1}}$
\item[] \hskip 5cm uniformly for all $\delta_1,\delta_2,r_1,r_2>0$ and $\alpha\geq 0$.
\end{itemize}
\end{rem}

In \cite{NW}, Nagel and Wainger considered the $L^2$ boundedness of certain singular integral
operators on $\Bbb R^n$ whose kernels have  appropriate
homogeneities with respect to a multi-parameter group of dilations,
generated by a finite number of diagonal matrices. In particular,
they considered the following two-parameter dilation group
\begin{equation}\label{eq 2.19}
\delta (s, t)(x_1, x_2, x_3)=(sx_1, tx_2, s^\alpha t^\beta x_3)
\end{equation}
acting on $\Bbb R^3$ for $s, t, \alpha,\beta >0.$ They defined a
singular kernel $\mathcal K$ by
$$\mathcal K(x_1, x_2, x_3) = \text{sgn}(x_1 x_2)
  \bigg\{\frac{|x_1|^{\alpha-1} |x_2|^{\beta -1}}{|x_1|^{2\alpha} |x_2|^{2\beta}+x_3^2}\bigg\}$$
and proved that convolution with $\mathcal K$ is bounded on
$L^2(\Bbb R^3)$.

It is easy to see that when $\alpha=\beta=1$, $\mathcal K(x_1, x_2, x_3)$
satisfies all conditions in Corollary \ref{Cor 1.2} and Theorem \ref{Thm 1.3}.
Therefore, by Theorem \ref{Thm 1.3}, the convolution singular
integral operator $\mathcal K\ast f$ with $\alpha=\beta=1$ is also bounded on $L^p(\Bbb
R^3)$ for $1<p<\infty,$ where $\mathcal K\ast f$ is defined by the
limit of $\mathcal K_\epsilon^N\ast f$ in the $L^p$, $1<p<\infty$, norm.
It is worthwhile to point out that the theory we are developing here can be easily
generalized to the ``anisotropic" case (adapted to $\delta (s, t)$ in \eqref{eq 2.19}). The details are left to the interested reader.

\vskip 1cm

\end{document}